\documentclass[a4paper, 11pt, reqno]{amsart}
\usepackage{amsmath,amssymb,amsthm,amsfonts,mathrsfs,color,graphicx}
\usepackage{setspace}
\usepackage[margin=3cm]{geometry}

\usepackage[hidelinks]{hyperref}
\usepackage{cite}
\usepackage[normalem]{ulem}
\usepackage{enumerate,latexsym}

\DeclareMathOperator{\rr}{\mathbb R}

\newcommand{\ba}{\mathbb{B}^{n}(r)}

\newcommand{\tr}{\mathrm{Tr}}

\newcommand{\R}{\mathbb{R}}

\newcommand{\si}{\Sigma}

\newcommand{\p}{\partial}
\newcommand{\pa}{\partial}

\newtheorem{theorem}{Theorem}

\newtheorem*{thm1}{Theorem A}
\newtheorem*{thm2}{Theorem B}
\newtheorem*{thm3}{Theorem C}
\newtheorem{lemma}{Lemma}
\newtheorem{proposition}{Proposition}
\newtheorem{remark}{Remark}

\theoremstyle{definition}

\numberwithin{equation}{section}

\title{Eigenvalue problems and free boundary minimal surfaces in spherical caps}
\author{Vanderson Lima and Ana Menezes}
\address{Instituto de Matem\'atica e Estat\'istica\\ Universidade Federal do Rio Grande do Sul\\ Brazil}
\email{vanderson.lima@ufrgs.br}

\address{Department of Mathematics\\ Princeton University\\ USA}
\email{amenezes@math.princeton.edu}

\bibliographystyle{plain}

\begin{document}

\maketitle

\begin{abstract}
Given a compact surface with boundary, we introduce a family of functionals on the space of its Riemannian metrics, defined via eigenvalues of a Steklov-type problem. We prove that each such functional is uniformly bounded from above, and we characterize maximizing metrics as induced by free boundary minimal immersions in some geodesic ball of a round sphere. Also, we determine that the maximizer in the case of a disk is a spherical cap of dimension two, and we prove rotational symmetry of free boundary minimal annuli in geodesic balls of round spheres which are immersed by first eigenfunctions.
\end{abstract}

\section{Introduction}

Given a closed surface $M$ and a Riemannian metric $g$ on $M$, let $\lambda_1(g)$ denote the first nontrivial eigenvalue of $-\Delta_g = -\mathrm{div}_{g}\nabla^{g}$. Define
\begin{equation}\label{eq:Lap.funct}
\lambda_{1}^{\ast}(M) = \sup_{g} \lambda_1(g)|M|_g,
\end{equation}
where $|M|_g$ denotes the area of $(M,g)$. By the work of Hersch \cite{He}, Yang-Yau \cite{YY} and Karpukhin \cite{K1}, we know that $\lambda_{1}^{\ast}(M) < \infty$. A natural question is whether there is a metric realizing $\lambda_{1}^{\ast}(M)$. 

Hersch \cite{He} showed that in the case of a sphere the only maximizers are round metrics. After that, Li-Yau \cite{LY} characterized the round metrics as the unique maximizers when $M = \mathbb{RP}^2$. Nadirashvili \cite{N} proved that metrics maximizing $\lambda_1(g)|M|_g$ are induced by minimal immersions $M \to \mathbb{S}^n$, for some $n$, and this led to the proof that the flat equilateral metric is the unique maximizer in the $2$-torus \cite{N,Gi,CKM}. The maximizers were also found for the Klein bottle \cite{EGJ,JNP,CKM}, and the orientable closed surface of genus $2$ \cite{JLNP,NS}. 

Concerning the existence of maximizers in general, Petrides \cite{Pe1} showed that, if instead of \eqref{eq:Lap.funct} we consider the quantity $\sup_{g \in \mathfrak{C}} \lambda_1(g)|M|_g$, for a fixed conformal class $\mathfrak{C}$, then a maximizer exists for every closed surface $M$ and it is induced by a harmonic map $(M,\mathfrak{C}) \to \mathbb{S}^{n}$. Denoting by $M_{\gamma}$ the orientable closed surface of genus $\gamma$, Petrides \cite{Pe1} also proved that if 
$\lambda_{1}^{\ast}(M_{\gamma+1}) > \lambda_{1}^{\ast}(M_{\gamma})$, then there exists a maximizer for $\lambda_{1}^{\ast}(M_{\gamma+1})$. In \cite{KS1,KS2} Karpukhin-Stern developed a completely new approach to the existence results, and in a recent paper \cite{KKMS}, Karpukhin-Kusner-McGrath-Stern proved that for every $\gamma \geq 0$, either $\lambda_{1}^{\ast}(M_{\gamma})$ or $\lambda_{1}^{\ast}(M_{\gamma+1})$ admits a $\lambda_{1}^{\ast}$-maximizing metric. 

Let $(\si,g)$ be a compact Riemannian surface with boundary. Fraser-Schoen \cite{F.S1,F.S2} considered the quantity
\begin{equation}\label{intro:Stek.funct}
\sigma_{1}^{\ast}(\si) = \sup_{g}\sigma_{1}(g)|\pa\si|_{g},
\end{equation}
where $|\pa\si|_{g}$ is the length of $(\pa\si, g)$ and $\sigma_{1}(g)$ denotes the first nontrivial Steklov eigenvalue, and they proved that $\sigma_{1}^{\ast}(\si) < \infty$ if $\si$ is orientable (see the work of Medvedev \cite{Me} for the non-orientable case). In \cite{F.S2}, they also showed that any metric achieving the supremum is induced by a free boundary conformal minimal immersion of $\si$ into an Euclidean unit ball $\mathrm{B}^{n}$. 

The maximizers of \eqref{intro:Stek.funct} are known in the case of the disk by the work of Weinstock \cite{We}, and in the cases of the annulus and the M\"obius band by Fraser-Schoen \cite{F.S2}. Moreover, combining \cite[Proposition 8.1]{F.S2} with the results in \cite{KS2}, we know that in the case of orientable surfaces of genus zero, the maximizers are realized by embedded free boundary minimal surfaces in $\mathrm{B}^3$ which converge to $\mathbb{S}^2$ as the number of boundary components goes to infinity. In the case of a general surface with boundary, Petrides \cite{Pe2} obtained existence results analogous to his previous results on the Laplacian problem mentioned before, and Karpukhin-Kusner-McGrath-Stern \cite{KKMS}  proved that each compact oriented surface with boundary of genus at most one admits a $\sigma_{1}^{\ast}$-maximizing metric.

Inspired by the previous two problems, we introduce an eigenvalue optimization problem which is related to free boundary minimal surfaces in \emph{spherical caps}, i.e., geodesic balls of round spheres. In order to state our results, let us first introduce some terminology.

Let $\alpha$ be a real number. There exists a self-adjoint operator $\mathcal{D}_{\alpha}$ (see Section \ref{sec:Steklov}) defined on a dense subspace of $L^{2}(\pa\si,g)$ whose spectrum is discrete and satisfies
$$\sigma_{0}(g,\alpha) < \sigma_{1}(g,\alpha) \leq \ldots \sigma_{j}(g,\alpha) \leq \ldots \to +\infty.$$
These numbers are called the \emph{Steklov eigenvalues with frequency} $\alpha$ of $(\si,g)$. The case $\alpha = 0$ recovers the Steklov spectrum. Observe that $\sigma_{0}(g,0) = 0$, for any metric $g$.

Consider the round sphere 
\begin{align*}
\mathbb{S}^{n} &= \bigg\{x \in \R^{n+1}; \ \sum_{j=0}^{n} x_{j}^{2} = 1 \bigg\},
\end{align*}
and let $\ba = \left\{x \in \mathbb{S}^{n};\, x_{0} \geq \cos r\right\}$ be its geodesic ball of center $\big(1,0,\ldots,0\big)$ and radius $0 < r < \frac{\pi}{2}$.  

Consider an isometric immersion $\Phi: (\si,g) \to \ba$ such that $\Phi(\partial\si) \subset \partial\ba$. Let $\nu$ be the outward pointing unit $g$-normal vector of $\partial\si$. Then $\Phi$ is minimal and free boundary if, and only if, the coordinate functions $\phi_i = x_i\circ\Phi$ satisfy (see Proposition \ref{prop-eigen}):
\begin{align*}
\Delta_{g}\, \phi_i + 2\phi_i &= 0 \quad \textrm{in } \si,\ i=0,1,\ldots,n,\\
\frac{\partial \phi_0}{\partial\nu} + (\tan r)\phi_0 &= 0\quad \textrm{on } \partial\si,\\
\frac{\partial \phi_i}{\partial\nu} - (\cot r)\phi_i &= 0 \quad \textrm{on } \partial\si,\ i=1,\ldots,n.
\end{align*}
Also, $2$ is not an eigenvalue of $-\Delta_g$ with Dirichlet boundary condition (see Proposition \ref{prop-dirichlet}). This implies that $\phi_{i}\big\vert_{\pa\si}$ are eigenfunctions of the Steklov problem with frequency $2$ (see Section \ref{sec:Steklov}), whose eigenvalues are $-\tan r$ (for $i=0$) and $\cot r$ (for $i = 1,\ldots,n$).

We denote by $\mathcal{M}(\si)$ the space of smooth Riemannian metrics $g$ on $\si$ such that the first eigenvalue of $-\Delta_g$ with Dirichlet boundary condition is greater than $2$ (see Section \ref{sec:Steklov} for a justification on this restriction on the metrics). Then, for any $g \in \mathcal{M}(\si)$, we define
\begin{align*}
\Theta_{r}(\si,g) &= \big[\sigma_{0}(g,2)\cos^{2} r + \sigma_{1}(g,2)\sin^{2} r\big]|\partial\si|_{g} +  2|\si|_{g}.
\end{align*}

\begin{thm1}
Let $\si$ be a compact orientable surface of genus $\gamma$ and $\ell$ boundary components. Then, for any $g \in \mathcal{M}(\si)$, we have
\begin{equation*}
\Theta_{r}(\si,g) \leq 4\pi(1 -\cos r)(\gamma + \ell).
\end{equation*}
Moreover, if $\si$ is a disk, the equality holds if, and only if, $(\si,g)$ is isometric to $\mathbb{B}^{2}_{r}$.
\end{thm1}

The uniqueness in the case of the disk is analogous to the result of Weinstock \cite{We}.

A consequence of the previous theorem is that if $\si$ is orientable, then the following quantities are finite:
\begin{align*}
\Theta_{r}(\si,\mathfrak{C}) &= \sup_{g \in \mathcal{M}(\si)\cap\mathfrak{C}} \Theta_{r}(\si,g),\\
\Theta_{r}^{\ast}(\si) &= \sup_{\mathfrak{C}} \Theta_{r}(\si,\mathfrak{C}) = \sup_{g \in \mathcal{M}(\si)} \Theta_{r}(\si,g),
\end{align*}
where $\mathfrak{C}$ denotes an arbitrary conformal class of $\si$.

It would be interesting to know whether maximizers exist for any topological type of $\Sigma$. In Section \ref{sec.max}, we prove that a maximizing sequence of metrics cannot converge smoothly to the boundary of $\mathcal{M}(\Sigma)$. In the next theorem, we show that if such maximizers exist, they are obtained as the induced metric of a free boundary minimal surface in a spherical cap.

\begin{thm2}\label{thm-intro2}
Let $\si$ be a compact surface with boundary.
\begin{itemize}
\item[(i)] If $g\in \mathcal{M}(\si)$ satisfies $\Theta_{r}(\si,g) = \Theta_{r}^{\ast}(\si)$, then there exist a $\sigma_{0}(g,2)$-eigenfunction $v_0$ and independent $\sigma_{1}(g,2)$-eigenfunctions $v_1,\ldots,v_n$, which induce a free boundary minimal isometric immersion $v = (v_0,v_1,\ldots,v_n):(\si,g) \to \ba$.
\item[(ii)] If $g\in \mathcal{M}(\si)\cap \, \mathfrak{C}$ satisfies $\Theta_{r}(\si,g) = \Theta_{r}(\si,\mathfrak{C})$, then there are a $\sigma_{0}(g,2)$-eigenfunction $v_0$ and independent $\sigma_{1}(g,2)$-eigenfunctions $v_1,\ldots,v_n$, which induce a free boundary harmonic map $v = (v_0,v_1,\ldots,v_n):(\si,g) \to \ba$.
\end{itemize}
\end{thm2} 

 In \cite{dCD}, do Carmo and Dajczer studied the concept of rotational hypersurfaces in $\mathbb{S}^n$ and characterized in the case $n=3$ the ones having constant mean curvature. The minimal surface case was done by Otsuki \cite{Ot}. These surfaces were considered by Li and Xiong \cite{LX} in the context of free boundary minimal surfaces in spherical caps. We will use here the setting of \cite{dCD}.

 In our next result, we characterize the immersions from Theorem B in the case of an annulus. 

\begin{thm3}\label{thm-intro3}
Let $\si$ be an annulus and consider a free boundary minimal immersion $\Phi = (\phi_0,\ldots,\phi_n):(\Sigma,g) \to \mathbb{B}^n(r)$. Suppose $\phi_i$ is a $\sigma_{1}(g,2)$-eigenfunction, for $i=1,\ldots,n$. Then $n=3$ and $\Phi(\Sigma)$ is one of the rotational examples given in \cite{dCD, Ot}.
\end{thm3}

Observe that we do not need to have as hypothesis that $\phi_0$ is a $\sigma_0(g,2)$-eigenfunction since this is always true for any free boundary minimal immersion into $\mathbb{B}^n(r)$ (see Remark \ref{rem-phi0}).

Theorem C is analogous to the uniqueness result of the critical catenoid proved by Fraser-Schoen in \cite{F.S3}, as well as to results of Montiel-Ros \cite{MoRo} and El Soufi-Ilias \cite{EI} which characterize the Clifford torus and the flat equilateral torus. 

Moreover, by Theorems B and C, if there is a smooth metric achieving $\Theta_{r}^{\ast}(\si)$ in the case of an annulus, then the metric is induced by the immersion of a rotational free boundary minimal annulus.

We should mention that after the works \cite{F.S1,F.S3}, the study of free boundary minimal surfaces in Riemannian manifolds has had a great progress, see for instance the survey \cite{Li} and the references therein. Let us now mention some other works connecting eigenvalue problems and minimal surfaces. We can define a functional analogous to \eqref{eq:Lap.funct} replacing $\lambda_1$ by a higher eigenvalue of $-\Delta_g$, which has been studied in \cite{EI2,Pe2,K2,KNPP1,KNPP2}. For the case of higher Steklov eigenvalues see \cite{F.S2,GP,Pe3}. Very recently, Petrides \cite{Pe4,Pe5} considered functionals involving either Laplacian eigenvalues or Steklov eigenvalues, whose maximizers are induced respectively by closed minimal surfaces in ellipsoids and compact free boundary minimal surfaces in domains bounded by ellipsoids. Also, Longa  \cite{Lo} linked compact minimal surfaces in spheres to maximizers of eigenvalues of the Neumann-Laplacian.

This paper is organized as follows. In the second section we describe the Steklov eigenvalue problem with frequency and we prove an upper bound for the multiplicity of the eigenvalues $\sigma_j(g, \alpha)$.  In Section 3 we describe the connection between free boundary minimal surfaces in spherical caps and the Steklov eigenvalue problem with frequency 2. In Section 4 we show the existence of an upper bound for our functional $\Theta_r(\Sigma, g)$ and the uniqueness in the case of a disk. In Section 5 we prove Theorem B. In Section 6 we describe the family of rotational free boundary minimal annuli in $\mathbb{B}^n(r)$ and, in Section 7, we prove the characterization of this family stated in Theorem C.
\\

\noindent
{\bf Acknowledgments}: 
The first author thanks the University of Chicago and Princeton University for the hospitality, where part of the research in this paper was developed. The first author was supported by the Simons Investigator Grant of Professor Andr\'e Neves, and by CNPq-Brazil Grant 405468/2021-0.\\


\section{The Steklov eigenvalue problem with frequency}\label{sec:Steklov}

Let $(\si,g)$ be a compact Riemannian manifold with boundary, and let $\nu_{g}$ be the outward pointing unit conormal vector of $\partial\si$. Fix $\alpha \in \rr$ which is not on the spectrum of $-\Delta_g$ with Dirichlet boundary condition. Then, given $u \in C^{\infty}(\partial\si)$, there is a unique $\widehat{u} \in C^{\infty}(\si)$ such that $\widehat{u}\vert_{\pa\si} = u$ and $\Delta_{g} \widehat{u} + \alpha  \widehat{u}= 0$ in $\si$. The \emph{Steklov eigenvalue problem at frequency $\alpha$} consists of finding $u \neq 0$ and $\sigma \in \rr$ such that
\begin{align*}
\frac{\partial \widehat{u}}{\partial\nu} = &  \ \sigma u \ \textrm{on } \partial\si.
\end{align*}

The numbers $\sigma$ for which there is $u$ satisfying the equation above form a sequence 
$$\sigma_{0}(g,\alpha) < \sigma_{1}(g,\alpha) \leq \ldots \sigma_{j}(g,\alpha) \leq \ldots \to +\infty,$$
and correspond to the spectrum of an operator $\mathcal{D}_{\alpha}$, which is called the \emph{Dirichlet-to-Neumann operator at frequency $\alpha$} and it is defined as follows:
\begin{itemize}
\item If $u \in H^{1}(\si,g)$ and $f \in L^{2}(\si,g)$, we say $\Delta_{g}u = f$ in the weak sense if
$$-\int_{\si} g\big(\nabla^{g} u,\nabla^{g} v\big)\,dA_{g} = \int_{\si} fv\,dA_{g}, \quad \forall\, v \in H^{1}_{0}(\si,g).$$
\item Suppose $u \in H^{1}(\si,g)$ and $\Delta_{g}u = f$ in the weak sense. We say that $\displaystyle\frac{\partial u}{\partial\nu_{g}} \in L^{2}(\partial\si,g)$ if there exists $\varphi \in L^{2}(\pa\si,g)$ such that
$$\int_{\si} g\big(\nabla^{g} u,\nabla^{g} v\big)\,dA_{g} + \int_{\si} fv\,dA_{g} = \int_{\pa\si} \varphi v\,dL_{g}, \quad \forall\, v \in H^{1}(\si,g).$$
\end{itemize}
Then, we define
\begin{align*}
\mathrm{dom}(\mathcal{D}_{\alpha}) &= \biggl\{u \in L^{2}(\partial\si,g);\, \exists\, \widehat{u} \in H^{1}(\si,g)\ \textrm{such that}\ \tr\,\widehat{u} = u,\, \Delta_{g} \widehat{u} + \alpha \widehat{u} = 0\\ &\quad\quad \textrm{in the weak sense, and}\ \frac{\partial \widehat{u}}{\partial\nu_{g}} \in L^{2}(\partial\si,g)\biggr\},\\
\mathcal{D}_{\alpha} u &= \frac{\partial \widehat{u}}{\partial\nu_{g}},
\end{align*}
where $\tr: H^{1}(\si,g) \to L^{2}(\pa\si,g)$ is the trace operator.

The domain $\mathrm{dom}(\mathcal{D}_{\alpha})$ is a dense subspace of $L^{2}(\partial\si,g)$ and $\mathcal{D}_{\alpha}$ is a self-adjoint operator which is bounded below and has compact resolvent. In particular, the spectrum of $\mathcal{D}_{\alpha}$ is discrete and bounded below. For more details about the construction of $\mathcal{D}_{\alpha}$, see \cite{AM1,AM2} (observe that their Laplacian is the negative of ours).

The eigenvalues $\sigma_{0}(g,\alpha)$ and $\sigma_{1}(g,\alpha)$ have a variational characterization. Let $Q$ be the quadratic form on $\mathrm{dom}(\mathcal{D}_{\alpha})$ defined by
$$Q(u,u) = \int_{\si} |\nabla^{g} \widehat{u}|^{2}_{g}\,dA_{g} - \alpha\int_{\si} \widehat{u}^{\,2}\,dA_{g}.$$
Then,
\begin{align}\label{var-sigma0}
\sigma_{0}(g,\alpha) = \inf\left\{Q(v,v)/\vert\vert u\vert\vert_{L^2(\pa\si)};\,u \in \mathrm{dom}(\mathcal{D}_{\alpha})\setminus\{0\}\right\},
\end{align}
and this eigenvalue is simple. Denote by $\phi_0$ a first eigenfunction, which we can choose to be positive. Then, applying the min-max principle to $\mathcal{D}_{\alpha}$, it follows that
\begin{equation*}\label{var-sigma1}
\sigma_{1}(g,\alpha) = \inf\left\{Q(v,v)/\vert\vert u\vert\vert_{L^2(\pa\si)};\,u \in \mathrm{dom}(\mathcal{D}_{\alpha})\setminus\{0\}\ \textrm{and} \int_{\partial\si} u \phi_{0}\,dL_{g} = 0\right\}.
\end{equation*} 

Now, we will present an alternative characterization of $\sigma_0(g,\alpha)$ and $\sigma_1(g,\alpha)$ for a special class of metrics. Denote by $\lambda_{0}^{D}(g)$ the first eigenvalue of $-\Delta_g$ with Dirichlet boundary condition. If $\lambda_{0}^{D}(g) > \alpha\geq0$, then
\begin{equation}\label{var-sigma)_alt}
\begin{split}
\displaystyle\sigma_{0}(g,\alpha) &= \inf\left\{\frac{\displaystyle\int_{\si} |\nabla^{g} u|^{2}_{g}\,dA_{g} - \alpha\int_{\si}u^{2}\,dA_{g}}{\displaystyle\int_{\partial\si} u^2\,dL_{g}};\,u \in W^{1,2}(\si)\ \text{and}\ \tr\,u \neq 0\right\},\\
\sigma_{1}(g,\alpha) &= \inf \left\{\frac{\displaystyle\int_{\si} |\nabla^{g} u|^{2}_{g}\,dA_{g} - \alpha\int_{\si} u^{2}\,dA_{g}}{\displaystyle\int_{\partial\si} u^2\,dL_{g}};\, u \in W^{1,2}(\si), \tr\,u \neq 0\ \textrm{and} \int_{\partial\si} u \phi_{0}\,dL_{g} = 0\right\}.
\end{split}
\end{equation}

\begin{remark}
For $\alpha \geq 0$, the quotient $\displaystyle \int_{\si}\big(|\nabla^{g} u|^{2}_{g} - \alpha u^{2}\big)\,dA_{g}/\int_{\partial\si} u^2\,dL_{g}$ is bounded from below in the domain $\displaystyle\{u \in W^{1,2}(\si), \tr\,u \neq 0\}$ if, and only if, $\lambda_{0}^{D}(g) > \alpha$. Hence, the previous characterization only holds in the case $\lambda_{0}^{D}(g) > \alpha$.
\end{remark}

Observe that the hypothesis $\lambda_{0}^{D}(g) > \alpha$ is equivalent to the condition that the Dirichlet eigenvalue problem 
\begin{equation*}
\begin{split}
-\Delta_{g}u - \alpha u &= \lambda u,\ \textrm{in}\ \si,\\
u &= 0,\ \textrm{on}\ \partial\si,
\end{split}
\end{equation*}
 does not admit non-positive eigenvalues $\lambda$.

In order to find an upper bound for the multiplicity of $\sigma_j(g,\alpha)$, $j\geq 1$,  we need the following result concerning nodal domains.

\begin{theorem}[Hassannezhad-Sher, \cite{HS}]\label{thm-HS}
Let $(\si,g)$ be a compact Riemannian manifold with boundary such that $\lambda_{0}^{D}(g) > \alpha$. Then, for any $\sigma_{j}(g,\alpha)$-eigenfunction, its number of nodal domains $N_j$ in $\si$ satisfies
$$N_j \leq j + 1.$$
\end{theorem}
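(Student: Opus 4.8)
The plan is to adapt the classical proof of Courant's nodal domain theorem to the Dirichlet-to-Neumann operator $\mathcal{D}_\alpha$, using the variational characterizations \eqref{var-sigma)_alt}. Write $\widehat{u}$ for a $\sigma_j(g,\alpha)$-eigenfunction, i.e.\ the frequency-$\alpha$ harmonic extension of a boundary eigenfunction $u$, let $\Omega_1,\dots,\Omega_N$ be its nodal domains in $\si$, and set $\psi_i=\widehat{u}\,\mathbf{1}_{\Omega_i}\in W^{1,2}(\si)$; these lie in $W^{1,2}$ because $\widehat{u}$ vanishes on the interior nodal set. The only feature distinguishing this setting from the classical one is that the denominator $\int_{\pa\si}u^2\,dL_g$ of the Rayleigh quotient sees only the boundary, so a nodal domain not reaching $\pa\si$ would produce a degenerate, zero-trace test function and break the Courant count. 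Controlling such \emph{interior} nodal domains is exactly where the hypothesis $\lambda_0^D(g)>\alpha$ enters, and this is the main point.

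First I would show that every nodal domain reaches the boundary, in the sense that $\tr\,\psi_i\neq 0$. Suppose instead that $\widehat{u}\vert_{\Omega_i}\in W^{1,2}_0(\Omega_i)$ for some $i$. Since $\Delta_g\widehat{u}+\alpha\widehat{u}=0$ on $\Omega_i$, integration by parts gives $\int_{\Omega_i}|\nabla^g\widehat{u}|^2_g=\alpha\int_{\Omega_i}\widehat{u}^{\,2}$, so $\widehat{u}\vert_{\Omega_i}$ is an admissible test function for the first Dirichlet eigenvalue of $\Omega_i$ with Rayleigh quotient equal to $\alpha$; hence $\lambda_0^D(\Omega_i)\le\alpha$. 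On the other hand, the inclusion $W^{1,2}_0(\Omega_i)\hookrightarrow W^{1,2}_0(\si)$ by extension by zero yields the domain monotonicity $\lambda_0^D(\Omega_i)\ge\lambda_0^D(\si)$, and by hypothesis $\lambda_0^D(\si)>\alpha$. Together, $\alpha\ge\lambda_0^D(\Omega_i)\ge\lambda_0^D(\si)>\alpha$, a contradiction. Thus each $\psi_i$ has nonzero boundary trace, and in particular $\int_{\pa\si\cap\pa\Omega_i}\widehat{u}^{\,2}\,dL_g>0$.

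Next, assume for contradiction that $N\ge j+2$. The span of $\psi_1,\dots,\psi_{j+1}$ is $(j+1)$-dimensional, so there is a nontrivial combination $w=\sum_{i=1}^{j+1}c_i\psi_i$ satisfying the $j$ orthogonality conditions $\int_{\pa\si}w\,\phi_m\,dL_g=0$ for $m=0,\dots,j-1$, where $\phi_0,\dots,\phi_{j-1}$ are the lower eigenfunctions. Using that the $\psi_i$ have disjoint supports and that, on each $\Omega_i$, integration by parts together with $\Delta_g\widehat{u}+\alpha\widehat{u}=0$ and the boundary condition $\pa\widehat{u}/\pa\nu=\sigma_j\widehat{u}$ on $\pa\si\cap\pa\Omega_i$ (while $\widehat{u}=0$ on the interior nodal part of $\pa\Omega_i$) gives $\int_{\Omega_i}|\nabla^g\widehat{u}|^2_g-\alpha\int_{\Omega_i}\widehat{u}^{\,2}=\sigma_j\int_{\pa\si\cap\pa\Omega_i}\widehat{u}^{\,2}$, one finds that the Rayleigh quotient of $w$ equals $\sigma_j(g,\alpha)$ exactly. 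By the variational characterization of $\sigma_j$, every function in the constraint space has Rayleigh quotient $\ge\sigma_j$, with equality only on eigenfunctions; hence $w$ is a $\sigma_j(g,\alpha)$-eigenfunction. To conclude, note that $w$ minimizes the Rayleigh quotient and that, for fixed trace, the frequency-$\alpha$ harmonic extension is the unique minimizer of the numerator (here $\lambda_0^D(g)>\alpha$ ensures strict convexity), so $w$ coincides with the harmonic extension of its own trace; thus $\Delta_g w+\alpha w=0$ on all of $\si$ and $w$ is smooth in the interior. A solution of this elliptic equation vanishing on the nonempty open set $\Omega_{j+2}$ must vanish identically by unique continuation, contradicting $w\neq 0$. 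Therefore $N\le j+1$.

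The main obstacle is the first step, ruling out nodal domains that do not meet $\pa\si$. In the classical Dirichlet or closed setting every piece of the eigenfunction contributes to both numerator and denominator of the relevant quotient, but for the frequency-$\alpha$ Steklov quotient an interior nodal domain yields a zero-trace test function, which would invalidate the Courant count. The hypothesis $\lambda_0^D(g)>\alpha$ is precisely what forbids this, via the domain-monotonicity comparison above; once interior nodal domains are excluded, the remainder is the standard Courant argument combined with unique continuation.
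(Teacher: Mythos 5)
This statement is imported by the paper from Hassannezhad--Sher \cite{HS} and is not proved in the paper itself, so there is no internal proof to compare against; your argument is correct and is essentially the argument of the cited source: the Courant scheme for the quotient $\big(\int_{\si}(|\nabla^{g} u|^{2}_{g}-\alpha u^{2})\,dA_{g}\big)/\int_{\pa\si}u^{2}\,dL_{g}$, with the hypothesis $\lambda_{0}^{D}(g)>\alpha$ used exactly as you use it, namely to exclude nodal domains whose trace vanishes, and with the equality case settled by replacing the minimizer by the frequency-$\alpha$ extension of its trace (strict convexity of the energy on the zero-trace directions, again from $\lambda_{0}^{D}(g)>\alpha$) followed by unique continuation. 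Two small points would make it airtight: the identity $\int_{\Omega_i}|\nabla^{g}\widehat{u}|^{2}_{g}\,dA_{g}-\alpha\int_{\Omega_i}\widehat{u}^{\,2}\,dA_{g}=\sigma_{j}\int_{\pa\si\cap\pa\Omega_i}\widehat{u}^{\,2}\,dL_{g}$ should be obtained by testing the weak formulation of the eigenvalue equation with $\psi_i=\widehat{u}\,\mathbf{1}_{\Omega_i}$ rather than by integrating by parts over $\Omega_i$, which sidesteps any regularity question about the nodal boundary; and the final unique-continuation step uses that $\si$ is connected, an implicit assumption in the statement.
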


Using this result we prove the following.

\begin{theorem} \label{thm:multiplicity}
Let $(\si,g)$ be a compact Riemannian surface of genus $\gamma$ and
$\ell$ boundary components such that $\lambda_{0}^{D}(g) > \alpha$.
Then, the multiplicity of $\sigma_{j}(g,\alpha)$ is at most $4\gamma + 2j + 1$.
\end{theorem}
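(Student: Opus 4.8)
The plan is to combine the nodal-domain bound of Theorem~\ref{thm-HS} with a Cheng-type vanishing-order argument, adapted to the boundary so as to exploit the Steklov condition. Write $m$ for the multiplicity of $\sigma=\sigma_j(g,\alpha)$ and let $V$ be its eigenspace, so $\dim V=m$. Since $\lambda_0^{D}(g)>\alpha$, each $u\in V$ has a unique $\alpha$-harmonic extension $\widehat u$, and I identify $V$ with this space of extensions. Fixing a boundary point $p\in\pa\si$, the requirement that the trace $u$ and its first $t-1$ tangential derivatives vanish at $p$ amounts to $t$ linear conditions on $V$; hence, taking $t=m-1$, there is a nonzero $u\in V$ whose extension $\widehat u$ vanishes at $p$ to some order $t\ge m-1$. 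The goal is then to prove $t\le 2j+4\gamma$, which yields $m\le t+1\le 4\gamma+2j+1$.

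The second step is the local analysis of the nodal set of $\widehat u$. By the standard results on solutions of $\Delta_g\widehat u+\alpha\widehat u=0$ (Hartman--Wintner/Bers), zeros have finite order and the nodal set is locally a union of arcs: at an interior zero of order $k$ the leading term is a homogeneous harmonic polynomial, so $2k$ arcs meet there. At the boundary point $p$, the Robin condition $\frac{\pa\widehat u}{\pa\nu}=\sigma u$ forces, via the Taylor recursion coming from the equation together with this condition, the leading homogeneous term to satisfy a Neumann condition; it is therefore of the form $r^{t}\cos(t\theta)$, and \emph{exactly} $t$ nodal arcs emanate from $p$ into the interior. I would also note that if $u\equiv0$ on a boundary circle then both Cauchy data of $\widehat u$ vanish there, so unique continuation forces $\widehat u\equiv0$; thus on each boundary component $u$ is either zero-free (one-signed) or has zeros of total order at least $2$ (an odd-order zero must be paired by another sign change on the circle).

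The third step is the topological count. First cap off with a disk every boundary circle on which $u$ has no zero; this leaves $\ell^{*}\le\ell$ boundary components, each carrying a zero, and changes neither the number $N$ of nodal domains nor the genus. On the capped surface form the graph $G=Z\cup\pa\si$, with vertices at the interior singular points (orders $k_b\ge2$, degree $2k_b$) and at the boundary zeros (orders $s_a\ge1$), and with faces the nodal domains. A standard Euler-characteristic count ($V-E+F\ge\chi=2-2\gamma-\ell^{*}$) gives
\begin{equation*}
\tfrac12\sum_a s_a+\sum_b(k_b-1)\ \le\ N+2\gamma+\ell^{*}-2 .
\end{equation*}
The decisive point, which removes the dependence on the number of boundary components, is the previous step: the component containing $p$ contributes at least $t$ to $\sum_a s_a$, while each of the other $\ell^{*}-1$ components contributes at least $2$, so $\sum_a s_a\ge t+2(\ell^{*}-1)$. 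Substituting and discarding the nonnegative term $\sum_b(k_b-1)$ yields $\tfrac{t}{2}\le N+2\gamma-1$, i.e. $t\le 2N+4\gamma-2$; with $N\le j+1$ from Theorem~\ref{thm-HS} this gives $t\le 2j+4\gamma$, and hence $m\le 4\gamma+2j+1$.

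The step I expect to be the main obstacle is the topological bookkeeping: making the Euler inequality rigorous for the (possibly singular) nodal graph, checking that emergent interior singularities only strengthen the estimate, and handling non-disk nodal domains. Most delicate is the per-component lower bound $\sum_a s_a\ge t+2(\ell^{*}-1)$, since it is exactly this estimate that cancels the $+\ell^{*}$ arising from $\chi(\si)$ and produces a bound independent of the number of boundary components. The boundary local analysis of the second step---establishing the Neumann-type leading term and the exact count of $t$ interior arcs---is the other essential ingredient, as it is what makes a boundary zero of order $t$ cost only $t$ conditions while still forcing $t$ arcs.
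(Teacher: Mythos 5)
Your proposal is correct, and it reaches the bound by a genuinely different route than the paper. The paper follows the Fraser--Schoen scheme of \cite{F.S3}: after establishing finiteness of the singular set (Claim 1, via the substitution $v=e^{\sigma_j\mu y}\phi$, which turns the Robin condition into a Neumann-type one and allows even reflection), it proves that the vanishing order at an \emph{interior} point is at most $2\gamma+j$ (Claim 2, using Theorem \ref{thm-HS} in place of the usual Courant bound), and then runs the jet count at an interior point: the Taylor recursion for $\Delta_g\phi+\alpha\phi=0$ shows that vanishing to order $s$ there costs only $2s-1$ linear conditions, so a multiplicity exceeding $4\gamma+2j+1$ would force an eigenfunction vanishing to order greater than $2\gamma+j$, a contradiction. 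You instead place the forced zero at a \emph{boundary} point, where each order of vanishing of the trace costs a single condition (so $m-1$ conditions suffice, with the Robin condition plus the same Taylor recursion upgrading trace vanishing to full vanishing of $\widehat{u}$), and you compensate by proving a boundary vanishing-order bound, $t\le 4\gamma+2j$, twice the interior one; this comes from the Neumann-type leading term $r^{t}\cos(t\theta)$ (exactly $t$ interior arcs, obtained by the very reflection trick the paper uses in its Claim 1) and the Euler-characteristic count, in which your per-component estimate $\sum_a s_a\ge t+2(\ell^{*}-1)$ is indeed the decisive point that cancels the $\ell^{*}$ in $\chi=2-2\gamma-\ell^{*}$ and makes the bound independent of the number of boundary components. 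The two halves balance exactly: $m-1\le t\le 2(j+1)+4\gamma-2$. This is essentially the Karpukhin--Kokarev--Polterovich approach to Steklov multiplicity bounds, adapted here to frequency $\alpha$; what it buys is that the topological argument directly bounds the multiplicity with no interior jet count, while the paper's route confines all boundary analysis to Claim 1 and otherwise works in the interior. Both arguments consume Theorem \ref{thm-HS} identically ($N\le j+1$), both implicitly assume orientability in writing $\chi=2-2\gamma-\ell$, and the technical lemmas you flag (finiteness of the singular set, parity of sign changes on a boundary circle, the inequality $V-E+F\ge\chi$ for nodal graphs with non-disk faces) are standard and are exactly of the kind the paper itself invokes, so I see no gap.
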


\begin{proof}
The proof follows the same strategy as in \cite[Theorem 2.3]{F.S3}. 

Consider a function $\phi \in C^{\infty}(\si)$ satisfying
\begin{equation}\label{eq:eing.aux}
\begin{split}
\Delta_{g}\phi + \alpha\phi &= 0\ \textrm{in}\ \si,\\
\frac{\partial\phi}{\partial\nu_{g}} - \sigma_{j}\phi &= 0\ \textrm{on}\ \partial\si.
\end{split}
\end{equation}

\noindent
{\bf Claim 1}: The set $\mathcal{S}_{\phi} = \big\{p \in \si; \, \phi(p) = 0, \nabla^{g}\phi(p) = 0\big\}$ is finite.\\

The claim for $\big(\si\setminus\partial\si\big)\cap \mathcal{S}_{\phi}$ is a classical result and it follows as in \cite{Ch}. 

Let $p \in \big(\partial\si\big)\cap \mathcal{S}_{\phi}$ and consider a chart $\Psi : \mathcal{U} \to U \subset \R^{2}_{+}$, where $U$ is the half-disk centered at the origin, such that $\Psi(p) = (0,0)$, the associated coordinates $x,y$ are isothermal, and $\Psi_{\ast}\nu_{g}$ is on the direction of $\partial_{y}$.
By abuse of notation, we will also denote $\phi\circ\Psi^{-1}$ by $\phi$. In these coordinates, we have $g = \mu\delta$, where $\delta = dx^2 + dy^2$ and $\mu$ is a positive function. It follows from \eqref{eq:eing.aux} that $v = e^{\sigma_{j}\mu y}\phi$ satisfies 
\begin{equation*}
\Delta_{\delta}v + \langle b,\nabla^{\delta}v\rangle + c v = 0\ \textrm{on}\ U,\, \textrm{and}\ \ \frac{\partial v}{\partial y}(x,0) = 0,
\end{equation*}
where $b$ and $c$ are smooth and bounded up to the boundary. The remaining of the proof follows exactly as in \cite{F.S3}, extending $v$ by reflection on the boundary and using unique continuation results.\\

\noindent
{\bf Claim 2}: Consider $p \in \si\setminus\partial\si$ such that $\phi(p) = 0$. Then, the order of vanishing of $\phi$ at $p$ is at most $2\gamma + j$.\\

Again the proof follows as in \cite{F.S3}, the only difference being that in \cite{F.S3} the authors make use of the Nodal Domain Theorem for the Steklov problem (the case $\alpha = 0$), and in our case we need to use Theorem \ref{thm-HS}.
\\

\noindent
{\bf Claim 3}: The multiplicity of $\sigma_{j}$ is at most $4\gamma + 2j + 1$.\\

Let $m_j$ be the multiplicity of $\sigma_j$ and consider a basis $\{\phi_{1},\ldots,\phi_{m_{j}}\}$ of the $j$-th eigenspace. Suppose that $m_j > 4\gamma + 2j + 1$. The strategy is to show this implies the existence of a $\sigma_j$-eigenfunction $\phi = a_{1}\phi_{1} + \ldots + a_{m_{j}}\phi_{m_{j}}$ and a point $p \in \si\setminus\partial\si$ such that the order of vanishing of $\phi$ at $p$ is greater than $2\gamma + j$, which contradicts Claim 2.

Let $p \in \si\setminus\partial\si$ and consider isothermal coordinates $(x,y)$ around $p$ such that $p$ is mapped to $(0,0)$ and $g = \mu\delta$. Since $\Delta_{g}\,\phi_{j} + \alpha\phi_{j} = 0$, for $0 \leq l \leq q$, we have
$$\frac{\pa^{q}\Delta_{\delta}\,\phi_{j}}{\pa^{l}x\, \pa^{q-l}y} = L\phi_{j},$$
where $L$ is a linear operator of order $q$.
Supposing that all the derivatives of $\phi_j$ of order $0\leq l \leq q+1$ vanish at $(0,0)$ we obtain
$$\frac{\pa^{q+2}\phi_{i}}{\pa^{l+2}x\, \pa^{q-l}y}(0,0) + \frac{\pa^{q+2}\phi_{i}}{\pa^{l}x\, \pa^{q-l+2}y}(0,0) = 0.$$
Now we can argue as in \cite{F.S3} to finish the proof.
\end{proof}

\section{Free boundary minimal immersions in spherical caps}

We start this section with two results which relate free boundary minimal surfaces in spherical caps to eigenvalue problems. In the following, we denote by $\{\partial_0,\partial_1,\ldots,\partial_n\}$ the canonical basis of $\R^{n+1}$. If $(\si,g)$ is a Riemannian manifold with boundary, and $\Phi: (\si,g) \to \ba$ is an isometric immersion with $\Phi(\pa\si) \subset \pa\ba$, we say $\Phi$ is \emph{free boundary} if $\si$ and $\pa\ba$ intersect orthogonally.
 
\begin{proposition}\label{prop-eigen}
Let $(\si^{k},g)$ be a compact Riemannian manifold and let $\nu$ be the outward pointing unit $g$-normal vector of $\partial\si$. Let $\Phi: (\si,g) \to \ba$ be an isometric immersion such that $\Phi(\partial\si) \subset \partial\ba$. Then $\Phi$ is minimal and free boundary if, and only if, the coordinate functions $\phi_i = x_i\circ\Phi$ satisfy 
\begin{align}
\Delta_{g}\, \phi_i + k\phi_i &= 0,\quad \textrm{in } \si,\ i=0,1,\ldots,n,\label{eq:FBMS1}\\
\frac{\partial \phi_0}{\partial\nu} + (\tan r)\phi_0 &= 0,\quad \textrm{on } \partial\si,\label{eq:FBMS2}\\
\frac{\partial \phi_i}{\partial\nu} - (\cot r)\phi_i &= 0,\quad \textrm{on } \partial\si,\ i=1,\ldots,n.\label{eq:FBMS3}
\end{align}
\end{proposition}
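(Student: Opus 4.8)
The plan is to prove the equivalence through the standard Takahashi-type dictionary between the Laplacian of the coordinate functions and the mean curvature vector, treating the interior equation and the two boundary conditions separately, and in each case reading the geometric condition off the algebra in both directions.

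First I would establish the interior equation \eqref{eq:FBMS1}. Writing $\Phi=(\phi_0,\dots,\phi_n)$ as a position vector in $\R^{n+1}$ and using that $\Phi$ is isometric, Beltrami's formula gives $\Delta_g\Phi=\vec H_{\si}^{\R^{n+1}}$, the (unnormalized) mean curvature vector of $\si$ in $\R^{n+1}$, with the sign convention matching $\Delta_g=\diver_g\nabla^g$. Since $\si\subset\mathbb S^{n}\subset\R^{n+1}$, I would split the second fundamental form: the shape operator of $\mathbb S^{n}$ with respect to its outward normal (the position vector $x$) is $-\id$, so its normal part contributes $-k\,x=-k\,\Phi$ along $T\si$, giving $\vec H_{\si}^{\R^{n+1}}=\vec H_{\si}^{\mathbb S^{n}}-k\,\Phi$. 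Hence
\[
\Delta_g\Phi+k\,\Phi=\vec H_{\si}^{\mathbb S^{n}},
\]
and since minimality in the cap $\ba$ is the same interior condition as minimality in $\mathbb S^{n}$, the vector equation $\Delta_g\Phi+k\Phi=0$ holds if and only if $\si$ is minimal. This handles \eqref{eq:FBMS1} in both directions at once.

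Next I would treat the boundary conditions. The free boundary property is equivalent to $d\Phi(\nu)=\eta$, where $\eta$ is the outward unit normal of $\partial\ba$ inside $\mathbb S^{n}$; indeed $T(\partial\si)\subset T(\partial\ba)$ because $\Phi(\partial\si)\subset\partial\ba$, and orthogonal intersection means exactly that the conormal $\nu$ lands in the normal line of $\partial\ba$. Since $\partial\ba=\{x_0=\cos r\}\cap\mathbb S^{n}$ is a level set of $x_0$, I compute $\nabla^{\mathbb S^{n}}x_0=e_0-x_0\,x$, whose norm on $\partial\ba$ is $\sin r$, so that $\eta=(\cos r\,\Phi-e_0)/\sin r$ there. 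Taking the $e_i$–components of $d\Phi(\nu)=\eta$ and using $\phi_0=\cos r$ and $\langle\Phi,e_i\rangle=\phi_i$ on $\partial\si$ yields $\partial\phi_0/\partial\nu=-\sin r=-(\tan r)\phi_0$ and $\partial\phi_i/\partial\nu=(\cot r)\phi_i$ for $i\ge 1$, which are \eqref{eq:FBMS2} and \eqref{eq:FBMS3}. Conversely, given the boundary conditions together with the geometric constraint $\Phi(\partial\si)\subset\partial\ba$ (which forces $\phi_0=\cos r$ on $\partial\si$), the same components reconstruct $d\Phi(\nu)=-\sin r\,e_0+(\cot r)\sum_{i\ge 1}\phi_i e_i=\eta$, and since $\eta\perp T(\partial\ba)$ this is precisely orthogonal intersection, i.e.\ the free boundary property.

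I expect the main obstacle to be bookkeeping rather than conceptual: pinning down the sign conventions so that Beltrami's formula, the shape operator of the sphere, and the orientation of $\nu$ versus $\eta$ are all mutually consistent, since a single wrong sign would flip \eqref{eq:FBMS2}--\eqref{eq:FBMS3}. A secondary point to verify carefully is that the component of the interior vector equation normal to $\mathbb S^{n}$ holds automatically, so that imposing the $n+1$ scalar equations \eqref{eq:FBMS1} is equivalent to the purely tangential condition $\vec H_{\si}^{\mathbb S^{n}}=0$. This follows because $|\Phi|^{2}\equiv 1$ and $\Phi$ is isometric force $\langle\Delta_g\Phi,\Phi\rangle=-k$, so the normal-to-$\mathbb S^{n}$ part of $\Delta_g\Phi$ equals $-k\Phi$ and is exactly cancelled by the $+k\Phi$ term, leaving only the tangential mean curvature.
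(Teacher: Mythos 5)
Your proposal is correct and takes essentially the same route as the paper: the paper cites Takahashi's identity $\Delta_{g}\phi_{v}=\langle\vec{H},v\rangle-k\phi_{v}$ (which you simply re-derive by splitting the second fundamental form of $\mathbb{S}^{n}\subset\R^{n+1}$) and likewise reduces the free boundary condition to $d\Phi(\nu)=N_{\pa\ba}=\frac{1}{\sin r}\bigl(\cos r\,x-\partial_{0}\bigr)$, reading off \eqref{eq:FBMS2}--\eqref{eq:FBMS3} componentwise with $\phi_{0}=\cos r$ on $\pa\si$. Your added verification that the normal-to-$\mathbb{S}^{n}$ part of $\Delta_{g}\Phi+k\Phi$ vanishes automatically (via $|\Phi|^{2}\equiv 1$ and $\sum_{i}|\nabla^{g}\phi_{i}|_{g}^{2}=k$) is a correct consistency check that the paper leaves implicit.
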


\begin{proof}
Let $\Phi: (\si^k,g) \to \ba$ be an isometric immersion such that $\phi_{0}\big\vert_{\partial\si} \equiv \cos r$. Fix $v \in \R^{n+1}$, and consider the function $\phi_{v}: \si \to \R$ defined by $\phi_{v}(x) = \langle \Phi(x),v\rangle$. It is well known that (see for instance \cite{Ta})
$$\Delta_{g}\,\phi_{v} = \langle \vec{H},v\rangle -k\phi_{v}.$$
Thus $\vec{H} = 0$ if, and only if, $\Delta_{g}\,\phi_{v} + k\phi_{v} = 0$, for all $v \in \R^{n+1}$.

On the other hand, 
$$\frac{\partial \phi_{v}}{\partial\nu} = \langle \nu,v\rangle.$$
The outward pointing unit normal to $\partial\ba$ is given by 
$$N_{\pa\ba} = \frac{1}{\sin r}\big(\cos r\,x - \partial_0\big),$$ 
and the immersion $\Phi$ is free boundary if, and only if, $\nu = N_{\pa\ba}$, i.e., if and only if
\begin{equation}\label{eq:der.normal}
\frac{\partial \phi_{v}}{\partial\nu} = \frac{1}{\sin r}\big(\cos r\langle \Phi,v\rangle - \langle \partial_0,v\rangle\big).
\end{equation}
Choosing $v = \partial_i$, $i=0,1,\ldots,n$, we obtain \eqref{eq:FBMS2} and \eqref{eq:FBMS3}.
\end{proof}

\begin{proposition}\label{prop-dirichlet}
Let $(\si^k,g)$ be a compact Riemannian manifold and $\Phi: (\si,g) \to \ba$ be an isometric minimal immersion with free boundary. Then, the Dirichlet problem
$$
\begin{cases}
-\Delta_g w -kw=\lambda w, & \mbox{in } \Sigma, \\
w=0, & \mbox{on } \partial\Sigma ,
\end{cases}
$$ 
has no solution for any $\lambda \leq 0$, i.e., $\lambda_0^D(g)>k$ . In particular, for any function $u \in C^{\infty}(\si)$ that satisfies $u=0$ on $\partial \si$, it holds that
$$
\int_{\si}(|\nabla^g u|^2-ku^2)dA\geq 0.
$$
\end{proposition}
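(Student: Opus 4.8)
The plan is to exploit the zeroth coordinate function $\phi_0 = x_0\circ\Phi$, which is a \emph{positive} solution of \eqref{eq:FBMS1} that is constant on the boundary. Since $\Phi(\si)\subset\ba = \{x_0\ge\cos r\}$ and $0<r<\frac{\pi}{2}$, we have $\phi_0\ge\cos r>0$ on all of $\si$, while $\Phi(\pa\si)\subset\pa\ba$ forces $\phi_0\equiv\cos r$ on $\pa\si$; moreover $\Delta_{g}\phi_0+k\phi_0=0$ by Proposition \ref{prop-eigen}. I would then run a Barta-type substitution: for an arbitrary $u\in C^{\infty}(\si)$ with $u=0$ on $\pa\si$, write $u=\phi_0\psi$, which is legitimate because $\phi_0$ is smooth and uniformly bounded below by $\cos r>0$, and note that $\psi=0$ on $\pa\si$.

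The main computation is to expand the Dirichlet energy of $u$. Writing $\nabla^{g}u=\psi\,\nabla^{g}\phi_0+\phi_0\,\nabla^{g}\psi$, the two contributions $\psi^{2}|\nabla^{g}\phi_0|^{2}$ and the cross term $2\psi\phi_0\,g(\nabla^{g}\phi_0,\nabla^{g}\psi)$ assemble into $g\big(\nabla^{g}\phi_0,\nabla^{g}(\phi_0\psi^{2})\big)$. Integrating this by parts and using $\Delta_{g}\phi_0=-k\phi_0$ together with the vanishing of the boundary integral (which carries the factor $\psi^{2}=0$ on $\pa\si$), one obtains
$$\int_{\si}\psi^{2}|\nabla^{g}\phi_0|^{2}\,dA + 2\int_{\si}\psi\phi_0\,g(\nabla^{g}\phi_0,\nabla^{g}\psi)\,dA = k\int_{\si}\phi_0^{2}\psi^{2}\,dA = k\int_{\si}u^{2}\,dA.$$
Substituting back into the expansion of $|\nabla^{g}u|^{2}$ leaves the clean identity
$$\int_{\si}\big(|\nabla^{g}u|^{2}-ku^{2}\big)\,dA = \int_{\si}\phi_0^{2}\,|\nabla^{g}\psi|^{2}\,dA \ge 0,$$
which is exactly the asserted ``in particular'' inequality.

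Finally, to upgrade this to $\lambda_0^{D}(g)>k$, I would argue by contradiction: suppose $-\Delta_{g}w-kw=\lambda w$ in $\si$, $w=0$ on $\pa\si$, for some $w\not\equiv0$ and some $\lambda\le0$. Multiplying by $w$ and integrating by parts gives $\int_{\si}(|\nabla^{g}w|^{2}-kw^{2})\,dA=\lambda\int_{\si}w^{2}\,dA\le0$. Comparing with the identity above applied to $u=w$ and $\psi=w/\phi_0$, the right-hand side $\int_{\si}\phi_0^{2}|\nabla^{g}\psi|^{2}\,dA$ is forced to vanish, so $\psi$ is constant; since $\psi=0$ on $\pa\si$, this yields $\psi\equiv0$ and hence $w\equiv0$, a contradiction. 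Thus no non-positive eigenvalue exists, that is $\lambda_0^{D}(g)>k$. I do not anticipate a genuine obstacle: the only points requiring care are the legitimacy of dividing by $\phi_0$ (guaranteed by the uniform lower bound $\phi_0\ge\cos r>0$) and the disappearance of the boundary term in the integration by parts, both of which are immediate from $\psi|_{\pa\si}=0$ and the smoothness of all functions involved. Notably, this argument uses only that $\phi_0$ is positive, solves \eqref{eq:FBMS1}, and is constant on $\pa\si$, so the precise free boundary condition \eqref{eq:FBMS2} for $\phi_0$ is not even needed.
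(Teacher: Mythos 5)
Your proposal is correct and follows essentially the same route as the paper: the paper also writes $w = h\phi_0$ with the positive solution $\phi_0$ of $\Delta_g\phi_0 + k\phi_0 = 0$, integrates by parts using $h|_{\pa\si}=0$ to reach $0 = \int_{\si}\phi_0^2|\nabla^g h|^2\,dA - \lambda\int_{\si}h^2\phi_0^2\,dA$, and concludes $h\equiv 0$. The only (cosmetic) difference is order of presentation — you isolate the Barta-type identity $\int_{\si}(|\nabla^g u|^2 - ku^2)\,dA = \int_{\si}\phi_0^2|\nabla^g\psi|^2\,dA$ first and derive both claims from it, whereas the paper runs the same computation inside the contradiction argument — and your closing observation that only positivity of $\phi_0$ and the interior equation are used (not the free boundary condition) is accurate and applies equally to the paper's proof.
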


\begin{proof} 
By Proposition \ref{prop-eigen}, we know $\phi_0=\langle\Phi, \partial_0\rangle$ satisfies $\Delta \phi_0 +k\phi_0=0$ with $\phi_0>0$ in $\Sigma.$
 
Suppose, by contradiction, there exists a nontrivial solution $w$ to the Dirichlet problem above with $\lambda\leq 0$. We can write $w=h\phi_0,$ for some function $h,$ where $h= 0$ on $\partial\si$. Hence,
\begin{align*}
0&=\displaystyle\int_\si (\vert \nabla w\vert^2-(k+\lambda)w^2)dA\\
&= \displaystyle\int_{\si}\big(\vert \nabla h\vert^2\phi_0^2+2h\phi_0\langle \nabla h, \nabla \phi_0\rangle +h^2\vert \nabla\phi_0\vert^2-(k+\lambda)h^2\phi_0^2\big)dA.
\end{align*}
Applying the divergence theorem and using that $\Delta \phi_0 +k\phi_0=0$ in $\si$ and $h=0$ on $\partial \si$, we get
$$
\displaystyle\int_\si h\phi_0\langle \nabla h, \nabla \phi_0\rangle dA = -\displaystyle\int_\si \left( h\phi_0\langle \nabla h, \nabla \phi_0\rangle + h^2\vert \nabla\phi_0\vert^2 + h^2\phi_0\Delta\phi_0\right)dA +\int_{\partial\si}h^2\phi_0\frac{\partial \phi_0}{\partial \nu} dL
$$

$$
\implies 2\displaystyle\int_\si h\phi_0\langle \nabla h, \nabla \phi_0\rangle dA =-\displaystyle\int_\si  h^2\vert \nabla\phi_0\vert^2 dA + k\displaystyle\int_\si  h^2\phi_0^2 dA.
$$
Then,
$$
0=\displaystyle\int_\si (\vert \nabla w\vert^2-(k+\lambda)w^2)dA=\displaystyle\int_\si \vert \nabla h\vert^2\phi_0^2dA-\lambda\displaystyle\int_\si  h^2\phi_0^2 dA.
$$
Since $\lambda\leq 0$ and $\phi_0>0,$ it implies that $h$ is necessarily a constant function. Thus, since $h=0$ on $\partial\si$, we conclude that $h\equiv 0$ in $\si$ and, therefore, $w\equiv 0$ in $\si$, a contradiction.
\end{proof}

\begin{remark}
Theorem \ref{thm:multiplicity} and Proposition \ref{prop-dirichlet} imply that for any free boundary minimal immersion $\Phi:(\si^2,g) \to \ba$ the multiplicity of $\sigma_j(g,2)$ is at most $4\gamma+2j+1$.\label{rem-mult}
\end{remark}

\begin{remark}
It follows from Propositions \ref{prop-eigen} and \ref{prop-dirichlet} that the coordinates functions of a free boundary minimal immersion $\Phi:(\si,g) \to \ba$ are Steklov eigenfunctions with frequency $2$. Also, since the eigenfunction $\phi_0$ associated to the eigenvalue $\sigma = -\tan r$ is positive, we have $\sigma_{0}(g,2) = -\tan r$.
\label{rem-phi0}
\end{remark}

Now we specialize to the codimension one case. Let $\Phi: \Sigma^{n-1} \to \mathbb B^{n}(r)$ be a free boundary minimal immersion and suppose $\si$ is orientable. The index form associated to the second variation of area is given by
\begin{equation*}\label{eq:index-form}
\begin{split}
S(\psi,\phi)&=\int_\Sigma\big[\langle\nabla^{g} \psi,\nabla^{g}\phi\rangle - (n-1 + \vert\mathrm{I\!I}_{\si}\vert^2)\psi\phi \big]dA - \cot(r)\int_{\partial\Sigma}\psi\phi\, dL,\\
&=-\int_{\Sigma}\big(\Delta_{g} \psi + (n-1)\,\psi + \vert\mathrm{I\!I}_{\si}\vert^2\psi\big)\phi\,dA + \int_{\partial\Sigma}\bigg(\frac{\pa\psi}{\pa\nu} -  \cot(r)\psi\bigg)\phi\, dL,
\end{split}
\end{equation*}
where $\mathrm{I\!I}_{\si}$ is the second fundamental of $\si$ in $\ba$. Observe that if $\si^{n-1}$ is not contained in a hyperplane containing the origin, then the space of functions $\mathcal C=\{\langle \Phi, v \rangle; v\in \mathbb R^{n+1}\}$ has dimension $n+1$.

\begin{lemma}
\label{lemma:perp}
Let $\Phi: \Sigma^{n-1} \to \mathbb B^{n}(r)$ be as above. Then, for any $\psi\in \mathcal C,$ we have
$$S(\psi,\psi) = -\int_{\si}\vert\mathrm{I\!I}_{\si}\vert^2\psi^2dA-\langle v, \partial_0 \rangle^2 \cot(r) \vert\partial \si\vert,$$
where $v$ is the vector that defines $\psi,$ i.e., $\psi=\langle \Phi, v \rangle.$
Hence, if $\si^{n-1}$ is not contained in a hyperplane containing the origin, then its Morse index is at least $n+1$.
\end{lemma}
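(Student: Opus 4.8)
The plan is to evaluate $S(\psi,\psi)$ directly on $\psi=\langle\Phi,v\rangle=\sum_{i=0}^n v_i\phi_i$ using the second (integrated-by-parts) expression for the index form, and to exploit the eigenvalue equations of Proposition \ref{prop-eigen} with $k=n-1=\dim\si$. Since each $\phi_i$ satisfies $\Delta_g\phi_i+(n-1)\phi_i=0$, linearity gives $\Delta_g\psi+(n-1)\psi=0$, so the interior integrand $\Delta_g\psi+(n-1)\psi+|\mathrm{I\!I}_{\si}|^2\psi$ collapses to $|\mathrm{I\!I}_{\si}|^2\psi$, producing the bulk term $-\int_\si|\mathrm{I\!I}_{\si}|^2\psi^2\,dA$ immediately.

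For the boundary integral I would insert the Robin conditions \eqref{eq:FBMS2}--\eqref{eq:FBMS3}: for $i\ge 1$ one has $\partial\phi_i/\partial\nu-\cot(r)\phi_i=0$, while for $i=0$ one gets $\partial\phi_0/\partial\nu-\cot(r)\phi_0=-(\tan r+\cot r)\phi_0$. Hence $\partial\psi/\partial\nu-\cot(r)\psi=-v_0(\tan r+\cot r)\phi_0$ on $\pa\si$, and using $\phi_0\equiv\cos r$ there the boundary term becomes $-v_0(\tan r+\cot r)\cos r\sum_j v_j\int_{\pa\si}\phi_j\,dL$.

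The crux is therefore to show that the cross terms vanish, i.e. $\int_{\pa\si}\phi_i\,dL=0$ for every $i\ge 1$; only the $j=0$ term should survive, and since $\int_{\pa\si}\phi_0\,dL=\cos r\,|\pa\si|$ and $(\tan r+\cot r)\cos^2 r=\cot r$, this gives exactly $-v_0^2\cot(r)|\pa\si|=-\langle v,\partial_0\rangle^2\cot(r)|\pa\si|$. To prove the vanishing I would use a balancing (first variation) identity coming from the ambient rotational Killing fields of $\mathbb{S}^n$ that mix $x_0$ and $x_i$. For the Killing field $X=x_0\partial_i-x_i\partial_0$ on $\mathbb{S}^n$, minimality of $\si$ in $\mathbb{S}^n$ gives $\diver_\si(X^{\top})=0$, so by the divergence theorem $\int_{\pa\si}\langle X,\nu\rangle\,dL=0$; evaluating $\langle X,\nu\rangle$ on $\pa\si$ with the free boundary identity $\nu=\tfrac{1}{\sin r}(\cos r\,\Phi-\partial_0)$ yields $\langle X,\nu\rangle=\phi_i/\sin r$, whence $\int_{\pa\si}\phi_i\,dL=0$. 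This is the step I expect to be the main obstacle, as it is the only place where the global free boundary and minimality structure (beyond the pointwise PDE of Proposition \ref{prop-eigen}) enters.

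Finally, to obtain the index bound I would observe that $\cot r>0$ and $|\mathrm{I\!I}_{\si}|^2\ge 0$ force $S(\psi,\psi)\le 0$ on the $(n+1)$-dimensional space $\mathcal C$. For strictness, if $S(\psi,\psi)=0$ then $\int_\si|\mathrm{I\!I}_{\si}|^2\psi^2\,dA=0$; since the hypothesis that $\si$ is not contained in a hyperplane through the origin rules out $\si$ being totally geodesic, $|\mathrm{I\!I}_{\si}|^2$ is positive on a nonempty open set, so $\psi$ vanishes there and hence, by unique continuation for $\Delta_g\psi+(n-1)\psi=0$, vanishes identically; but $\psi=\langle\Phi,v\rangle\equiv 0$ would place $\si$ in the hyperplane $v^{\perp}$, forcing $v=0$. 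Thus $S|_{\mathcal C}$ is negative definite and the Morse index is at least $\dim\mathcal C=n+1$.
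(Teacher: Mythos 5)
Your proposal is correct, and its computational skeleton coincides with the paper's: both use the integrated-by-parts form of $S$, kill the interior term via $\Delta_g\psi+(n-1)\psi=0$ to get $-\int_{\si}\vert\mathrm{I\!I}_{\si}\vert^2\psi^2\,dA$, and reduce the boundary term to $-\frac{\langle v,\partial_0\rangle}{\sin r}\int_{\pa\si}\langle\Phi,v\rangle\,dL$ (your factor $(\tan r+\cot r)\cos r=1/\sin r$ reproduces exactly the paper's intermediate expression, and the final algebra $(\tan r+\cot r)\cos^2 r=\cot r$ is the same). The genuinely different step is how you prove the crux, $\int_{\pa\si}\phi_i\,dL=0$ for $i\geq 1$. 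The paper gets this spectrally: $\phi_i\vert_{\pa\si}$ is a Steklov eigenfunction with frequency $n-1$ and eigenvalue $\cot r=\sigma_j$ for some $j\geq 1$, hence $L^2(\pa\si)$-orthogonal to the $\sigma_0$-eigenfunction $\phi_0\vert_{\pa\si}\equiv\cos r$, i.e.\ to the constants; this is a one-liner given the self-adjoint Dirichlet-to-Neumann framework of Section \ref{sec:Steklov} together with Propositions \ref{prop-eigen} and \ref{prop-dirichlet}. You instead use the balancing identity for the rotational Killing field $X=x_0\partial_i-x_i\partial_0$: minimality in $\mathbb{S}^n$ gives $\diver_{\si}X^{\top}=\langle X,\vec H\rangle=0$, and the free boundary identity $\nu=\frac{1}{\sin r}(\cos r\,\Phi-\partial_0)$ yields $\langle X,\nu\rangle=\phi_i/\sin r$ on $\pa\si$, so the divergence theorem does the job. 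This is equally valid, more elementary and self-contained (no spectral machinery, no need to know $\sigma_0$ is simple or that $n-1$ avoids the Dirichlet spectrum), at the cost of introducing the first-variation/Killing-field input; the paper's route is shorter given the infrastructure it has already built. A further point in your favor: you make explicit the strictness needed for the index bound (negative semidefiniteness plus the rigidity step: $S(\psi,\psi)=0$ forces $\psi\equiv 0$ by positivity of $\vert\mathrm{I\!I}_{\si}\vert^2$ on an open set, unique continuation for $\Delta_g\psi+(n-1)\psi=0$, and injectivity of $v\mapsto\langle\Phi,v\rangle$ under the hyperplane hypothesis), which the paper leaves implicit after establishing the formula.
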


\begin{proof}
First observe that since $\Delta_{g} \psi+(n-1)\psi=0,$ we just need to compute the boundary term. 
Using that $\nu=\displaystyle\frac{1}{\sin r}(\cos(r)\Phi-\partial_0),$ we have
\begin{align*}
\int_{\partial\si} \langle \Phi, v\rangle\frac{\pa}{\pa\nu} \big(\langle \Phi, v\rangle\big) dL -\cot(r)\int_{\partial \si} \langle \Phi, v \rangle^2 dL&= \int_{\partial\si} \langle \nu, v\rangle\langle \Phi, v\rangle dL -\cot(r)\int_{\partial \si} \langle \Phi, v \rangle^2 dL\\
&= -\frac{\langle v, \partial_0\rangle}{\sin r}\int_{\partial\si}\langle\Phi, v\rangle dL\\
&= -\frac{\langle v, \partial_0\rangle}{\sin r}\int_{\partial\si}\left[ \langle v, \partial_0\rangle\phi_0+ \sum_{i=1}^3\langle v,\partial_i\rangle\phi_i \right] dL\\
&= -\langle v, \partial_0\rangle^2\cot(r)\vert\partial \si\vert,
\end{align*}
where in the last equality we use that $\phi_0\big\vert_{\partial\si}=\cos(r)$ and the fact that $\displaystyle\sum_{i=1}^3\langle v,\partial_i\rangle\phi_i$ is a $\sigma_j$-eigenfunction for some $j \geq 1$ and, therefore, it is $L^2(\partial\si)$-orthogonal to $\phi = 1$ which is a
 $\sigma_0$-eigenfunction.
\end{proof}

\section{Eigenvalue bounds and a Weinstock-type result}

Let $\si$ be a compact surface with  boundary. We will denote by $\mathcal{M}(\si)$ the space of smooth Riemannian metrics $g$ on $\si$ such that $\lambda_{0}^{D}(g) > 2$. From now on, we will only be interested in metrics satisfying this property; hence, for simplicity, on the remainder of the paper we will denote $\sigma_{k}(g) = \sigma_{k}(g,2)$, $k \geq 0$.

Fix $r\in (0,\frac{\pi}{2}).$ We define the functional
\begin{align}\label{eq:funct_r}
\Theta_{r}(\si,g) &= \big[\sigma_{0}(g)\cos^{2} r + \sigma_{1}(g)\sin^{2} r\big]|\partial\si|_{g} +  2|\si|_{g}.
\end{align}

The goal of this section is to prove the existence of an upper bound for the functional $\Theta_r$ that is sharp in the case of a disk. For that, we will first prove that $\mathbb B^2(r)$ is immersed by eigenfunctions of $\sigma_0$ and $\sigma_1$. More precisely, we show:

\begin{theorem}
For any $0<r<\frac{\pi}{2}$, $\mathbb B^2(r)$ satisfies $\sigma_0=-\tan r$ and $\sigma_1=\cot r.$
\label{thm-disk}
\end{theorem}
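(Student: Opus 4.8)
The plan is to exhibit the eigenfunctions explicitly in geodesic polar coordinates and then to identify $-\tan r$ and $\cot r$ as the bottom two eigenvalues, by combining the sign characterization of $\sigma_0$ with a monotonicity argument over the Fourier modes.

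First I would set up coordinates. Writing points of $\mathbb{S}^2$ as $x_0 = \cos\theta$, $x_1 = \sin\theta\cos\varphi$, $x_2 = \sin\theta\sin\varphi$, with $\theta$ the geodesic distance to the pole $(1,0,0)$, the cap $\mathbb{B}^2(r)$ is $\{\theta \le r\}$ and the induced metric is $g = d\theta^2 + \sin^2\theta\,d\varphi^2$, so that $\Delta_g = \frac{1}{\sin\theta}\partial_\theta(\sin\theta\,\partial_\theta) + \frac{1}{\sin^2\theta}\partial_\varphi^2$. The functions $x_0,x_1,x_2$ are the degree-one spherical harmonics, hence $\Delta_g x_i + 2x_i = 0$ (equivalently, the inclusion is a codimension-zero free boundary minimal immersion and Proposition \ref{prop-eigen} applies with $k=2$). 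Since the outward conormal of $\partial\mathbb{B}^2(r)$ is $\nu=\partial_\theta$, a one-line computation at $\theta=r$ gives $\partial_\nu x_0 = -(\tan r)x_0$ and $\partial_\nu x_i = (\cot r)x_i$ for $i=1,2$, so the boundary traces are Steklov eigenfunctions at frequency $2$: the constant $x_0|_{\partial}=\cos r$ with eigenvalue $-\tan r$, and $x_1|_{\partial}=\sin r\cos\varphi$, $x_2|_{\partial}=\sin r\sin\varphi$ with eigenvalue $\cot r$. Note $\lambda_0^D(g)>2$ by Proposition \ref{prop-dirichlet}, so $g\in\mathcal{M}(\mathbb{B}^2(r))$ and the variational framework of Section \ref{sec:Steklov} is available.

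The value $\sigma_0=-\tan r$ is then immediate: the trace $x_0|_{\partial}=\cos r$ is a positive constant, and by the characterization \eqref{var-sigma0} (equivalently Remark \ref{rem-phi0}) the bottom eigenvalue of $\mathcal{D}_2$ is simple with a one-signed eigenfunction; since $x_0>0$ on the closed cap (as $\theta\le r<\frac{\pi}{2}$), its trace realizes $\sigma_0$. The real content is showing $\cot r=\sigma_1$, i.e. that no eigenvalue lies strictly between $-\tan r$ and $\cot r$. Because $g$ is rotationally symmetric, $\mathcal{D}_2$ commutes with rotations, so its eigenfunctions split into Fourier modes $e^{im\varphi}$ on $\partial\mathbb{B}^2(r)$. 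For each $m$ the unique $(\Delta_g+2)$-harmonic extension regular at the pole is $f_m(\theta)e^{im\varphi}$, where $f_m$ solves $f_m'' + \cot\theta\,f_m' + (2 - m^2/\sin^2\theta)f_m = 0$ with $f_m(\theta)\sim\theta^{|m|}$ near $0$ (so $f_0=\cos\theta$, $f_{\pm1}=\sin\theta$), and $f_m(r)\neq 0$ since $2$ lies below the Dirichlet spectrum. The associated eigenvalue is $\sigma_m = f_m'(r)/f_m(r)$, and the full spectrum of $\mathcal{D}_2$ is $\{\sigma_{|m|}\}_{m\in\mathbb{Z}}$. I would then prove strict monotonicity of $\sigma_m$ in $m\ge 0$ via the Rayleigh quotient
\begin{equation*}
R_m[f] = \frac{\displaystyle\int_0^r\Big[(f')^2 + \tfrac{m^2}{\sin^2\theta}f^2 - 2f^2\Big]\sin\theta\,d\theta}{f(r)^2\sin r},
\qquad \sigma_m = \min_f R_m[f],
\end{equation*}
the minimum being attained at $f_m$. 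For $m'>m\ge 0$, choosing $f=f_{m'}$ (which vanishes at the pole, so all integrals converge) gives $R_{m'}[f_{m'}] - R_m[f_{m'}] = (m'^2-m^2)\big(\int_0^r f_{m'}^2/\sin\theta\,d\theta\big)/(f_{m'}(r)^2\sin r) > 0$, whence $\sigma_{m'} = R_{m'}[f_{m'}] > R_m[f_{m'}] \ge \min_f R_m[f] = \sigma_m$. Therefore $\sigma_0<\sigma_1<\sigma_2<\cdots$, so $\sigma_2>\sigma_1=\cot r$ and nothing lies below $\cot r$ except $\sigma_0=-\tan r$; ordering the eigenvalues with multiplicity yields $\sigma_0(g,2)=-\tan r$ and $\sigma_1(g,2)=\cot r$.

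The crux I expect is precisely this last step: identifying the spectrum mode by mode and choosing the correct direction of comparison (testing $R_m$ against the \emph{higher} minimizer $f_{m'}$, which vanishes at the pole so that $\int_0^r f_{m'}^2/\sin\theta\,d\theta$ is finite) to obtain strict monotonicity and thereby exclude any eigenvalue between $-\tan r$ and $\cot r$. The remaining ingredients are an explicit computation with degree-one spherical harmonics and a direct appeal to the sign characterization of the ground state.
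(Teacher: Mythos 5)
Your proposal is correct, but it takes a genuinely different route from the paper's at the decisive step. Both arguments separate variables on the rotationally symmetric cap, and both get $\sigma_0=-\tan r$ from the positivity of $x_0$ together with simplicity of the ground state, exactly as in Remark \ref{rem-phi0}. The difference is in excluding eigenvalues strictly between $-\tan r$ and $\cot r$: the paper starts from an arbitrary $\sigma_1$-eigenfunction, invokes the Hassannezhad--Sher nodal bound (Theorem \ref{thm-HS}) to kill all Fourier modes $k\ge 2$ (a nontrivial mode $a_k(t)\cos(k\theta)$ would have at least $2k\ge 4$ nodal domains), and then disposes of the $k=0,1$ modes by exhibiting explicit second solutions of the ODEs \eqref{eq:azero} and \eqref{eq:aone} that are logarithmically singular at the pole. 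You avoid the nodal theorem entirely: you compute the whole spectrum mode by mode, with one eigenvalue $f_m'(r)/f_m(r)$ per mode coming from the regular solution $f_m\sim\theta^{|m|}$, and rule out intermediate eigenvalues by strict monotonicity in $m$ of the reduced Rayleigh quotients $R_m$, tested in the correct direction against the higher minimizer $f_{m'}$ (which vanishes at the pole, so $\int_0^r f_{m'}^2/\sin\theta\,d\theta<\infty$). Your route is more self-contained and yields strictly more information (the full ordered spectrum of the cap, with the mode eigenvalues strictly increasing in $|m|$), whereas the paper's route needs less variational infrastructure and reuses a template that reappears in its annulus theorem. The one step you should make explicit is the identity $\sigma_m=\min_f R_m[f]$: it holds because, thanks to $\lambda_0^D(g)>2$ (Proposition \ref{prop-dirichlet}), the $(\Delta_g+2)$-harmonic extension minimizes $\int_\Sigma\big(|\nabla^g u|^2-2u^2\big)\,dA$ among all $W^{1,2}$ extensions of a given trace (the cross term vanishes by the equation, and the Dirichlet gap makes the form nonnegative on $H^1_0$), while the singular solutions ($\theta^{-m}$ for $m\ge 1$, and the logarithmic solution for $m=0$) have infinite energy and are therefore excluded from the form domain; hence the infimum over mode-$m$ test functions is attained at $f_m$ with value $f_m'(r)/f_m(r)$, which is what your monotonicity chain requires.
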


\begin{proof}

Consider the parametrization $\Phi: [-r, r]\times \mathbb S^1\to \mathbb B^2(r)$ given by 
$$\Phi(t,\theta) = (\cos t, \sin t \cos\theta,\sin t \sin \theta).$$ By Proposition \ref{prop-eigen}, we already know that the coordinate functions $\phi_i$ are eigenfunctions.
Since $\phi_0(t,\theta)=\cos t$ is positive, then $\phi_0$ is necessarily a $\sigma_0$-eigenfunction with $\sigma_0=-\tan(r)$. Now let us show that $\sigma_1=\cot r.$

We have,
\begin{align*}
\Phi_{t} &= (-\sin t, \cos t \cos\theta,\cos t \sin \theta),\\
\Phi_{\theta} &
= (0,-\sin t \sin\theta,\sin t \cos\theta),\\
g_{tt} &= 1, g_{t\theta} = 0, g_{\theta\theta} = \sin^2 t.
\end{align*}
Hence, 
$$\Delta_g f = \pa^{2}_{tt}f + \frac{1}{\sin^2 t}\pa^{2}_{\theta\theta}f + (\cot t) \pa_{t}f.$$

Since the Laplace eigenfunctions of $\mathbb S^1$ form a basis for $L^2$, we can write any function $f:[-r, r]\times\mathbb S^1\to \mathbb{R}$ as
\begin{equation}
f(t,\theta)=a_0(t)\cdot 1+\displaystyle\sum_{k=1}^{\infty}[a_k(t)\cos(k\theta)+b_k(t)\sin(k\theta)].
\end{equation}

We claim that, if $f$ is a $\sigma_1$-eigenfunction then the functions $a_0(t)$, $a_k(t)\cos(k\theta)$, $b_k(t)\sin(k\theta)$ are $\sigma_1$-eigenfunctions as well. In fact, observe that 
$$
\Delta_g (a_k(t)\cos(k\theta))= \Delta_g (a_k(t))\cdot \cos(k\theta)-k^2\frac{a_k(t)}{\sin^2t}\cos(k\theta)=: \tilde{a}_k(t)\cos(k\theta),
$$
and, similarly, $\Delta_g (b_k(t)\sin(k\theta))=\tilde{b}_k(t)\sin(k\theta).$ Hence, $\Delta_g f+2f=0$ if, and only if, each of the functions $a_0(t)$, $a_k(t)\cos(k\theta)$, $b_k(t)\sin(k\theta)$ satisfies this same equation. Moreover, since $\frac{\partial}{\partial\nu}=\pm\partial_t$, they all have the same eigenvalue.

Also, by Theorem \ref{thm-HS}, we know the $\sigma_1$-eigenfunctions have at most $2$ nodal domains; hence, in particular, the functions $a_k$ and  $b_k$ are identically zero for $k\geq2.$

Notice that $a_0$ and $a_1$ satisfy:
\begin{align}
a_{0}^{\prime\prime}(t) + (\cot t)a_{0}^{\prime}(t) + 2a_{0}(t) &= 0,\label{eq:azero}\\
a_{1}^{\prime\prime}(t) + (\cot t)a_{1}^{\prime}(t) + \left[2 - \frac{1}{\sin^2 t}\right]a_{1}(t) &= 0.\label{eq:aone}
\end{align}

The function $\cos t$ is a solution of \eqref{eq:azero}. Thus, a second solution linearly independent to $\cos t$ is of the form $y = h(t)\cos t$, where $h$ satisfies
$$(\cos t)h^{\prime\prime}(t) + \left(-2\sin t + \frac{\cos^2 t}{\sin t}\right)h^{\prime}(t) = 0.$$
Hence,
$$y = 1 + \frac{1}{2}(\cos t)\log\left(\frac{1 - \cos t}{1 + \cos t}\right).$$
This function does not extend continuously to $t = 0$. Therefore, $a_0(t)$ is a constant multiple of $\cos t$; but since $\cos t$ is a $\sigma_0$-eigenfunction, we conclude $a_0 \equiv 0$ necessarily.

On the other hand, $\sin t$ is a solution of \eqref{eq:aone}. Thus, a second solution linearly independent to $\sin t$ is of the form $z = h(t)\sin t$, where $h$ satisfies
$$(\sin t)h^{\prime\prime}(t) + \left(3\cos t\right)h^{\prime}(t) = 0.$$
Hence,
$$z = \frac{\sin t}{8}\left[\sec^2 \left(\frac{t}{2}\right) + 4\log\left(\tan \left(\frac{t}{2}\right)\right) - \csc^2\left(\frac{t}{2}\right)\right].$$
This function also does not extend continuously to $t = 0$. Thus, $a_1(t)$ is a constant multiple of $\sin t$. By a similar analysis, we conclude $b_1(t) = c\sin t$ for some constant $c$. Therefore, $\phi_1=\sin t \cos\theta$ and $\phi_2=\sin t \sin \theta$ are $\sigma_1$-eigenfunctions and $\sigma_1=\cot r.$
\end{proof}

Next we prove the main result of this section that was stated in the introduction.

\begin{thm1}
Let $\si$ be a compact orientable surface of genus $\gamma$ and $\ell$ boundary components. Then, for any $g \in \mathcal{M}(\si)$, we have
\begin{equation}\label{eq:bound.func}
\Theta_{r}(\si,g) \leq 4\pi(1 -\cos r)(\gamma + \ell).
\end{equation}
Moreover, if $\si$ is a disk, then the equality in \eqref{eq:bound.func} holds if, and only if, $(\si,g)$ is isometric to $\mathbb{B}^{2}_{r}$.
\end{thm1}

\begin{proof}
Let $\si$ be a compact orientable surface of genus $\gamma$ and $\ell$ boundary components. By a result of Gabard \cite{Ga}, there exists a proper conformal branched cover $w:\Sigma\to \mathbb{D}^2$ of degree at most $\gamma+\ell$, where $\mathbb{D}^2\subset\mathbb{R}^2$ is the closed unit disk with center at the origin. 

Fix $r \in (0,\frac{\pi}{2})$. Recall that the area of $\mathbb{B}^{2}_{r}$ with respect to the round metric is $2\pi(1-\cos r)$. Since $\mathbb{D}^2$ is conformally equivalent to $\mathbb{B}^{2}_{r}$, there is also a proper conformal branched cover $u:(\Sigma,g) \to \mathbb{B}^{2}_{r}$ of the same degree as $w$.

Denote by $x_i$ and $\partial_i,\, i=0,1,2$,  the canonical coordinates of $\rr^3$ and the corresponding coordinate vector fields. 
Define $u_j = x_j\circ u$. In particular, along $\partial\si$ it holds
\begin{equation}\label{eq:Hersch.bdr}
u_0 = \cos r,\quad u_{1}^{2} + u_{2}^{2} = \sin^{2} r.
\end{equation}
Moreover, by using conformal diffeomorphisms of $\mathbb{B}^{2}_{r}$, we can also assume (see \cite{LY})
$$\int_{\partial\Sigma}\ u_j \phi_0\,dL=0, \quad j=1,2,$$ 
where $\phi_0$ is a positive eigenfunction associated to $\sigma_{0}$.

Since $u: \si \to \mathbb{B}^{2}_{r}$ is conformal, there is a positive function $f \in C^{\infty}(\si)$ such that, for all $X,Y \in \mathfrak{X}(\si)$, we have
$$g_{\mathbb{B}^{2}_{r}}\big(u_{*}X,u_{*}Y) = f^2 g(X,Y).$$  
So, by abuse of notation, 
$$\nabla^{g}u_j = f^2\nabla^{\mathbb{B}^{2}_{r}} x_j = f^2\big(\partial_j - x_{j} x\big),$$ 
which implies
\begin{equation}\label{eq:conformal.grad}
\sum_{j=0}^{2}\vert \nabla^{g}u_{j}\vert^{2}_{g} = 2f^2.
\end{equation}
Thus,
\begin{equation}\label{eq:Hersch.Dir}
\sum_{j=0}^2 \int_{\si} |\nabla^{g}u_j|^{2}_{g}\,dA_{g} = 2\int_{\si}f^2dA_{g}= 4\,\mathrm{deg}(u)\pi(1-\cos r) \leq  4\pi(1-\cos r)(\gamma + \ell).
\end{equation}

By the variational characterization of the eigenvalues, we have
\begin{align}\label{ineq1}
\sigma_{0}(g)\int_{\partial\si} u_{0}^{2}\,dL_{g} \leq \int_{\si} |\nabla^{g}u_{0}|^{2}_{g}\,dA_{g} - 2\int_{\si} u_{0}^{2}\,dA_{g}, 
\end{align}
and for $j = 1,2$,
\begin{align}\label{ineq2}
\sigma_{1}(g) \int_{\partial\si} u_{j}^{2}\,dL_{g} \leq \int_{\si} |\nabla^{g}u_{j}|^{2}_{g}\,dA_{g} - 2\int_{\si} u_{j}^{2}\,dA_{g}.
\end{align}
Summing these inequalities and using \eqref{eq:Hersch.bdr} and \eqref{eq:Hersch.Dir}, we obtain
\begin{equation*}\label{eq:estimate}
\left(\sigma_{0}(g)\cos^{2} r + \sigma_{1}(g)\sin^{2} r\right)|\partial\si|_{g} + 2|\si|_{g} \leq 4\pi(1-\cos r)(\gamma + \ell).
\end{equation*}

Now we will show that in the case the surface is a disk the equality holds if, and only if, the surface is isometric to the spherical cap.

First let us suppose $\Sigma=\mathbb{B}^{2}_{r}$. Then, by Theorem \ref{thm-disk}, we have 
\begin{align*}
\sigma_0(g) =  -\tan r,\quad  \sigma_1(g) =  \cot r.
\end{align*}

Hence, $\left(\sigma_{0}(g)\cos^{2} r + \sigma_{1}(g)\sin^{2} r\right)|\partial\si|_{g} + 2|\si|_{g}  = 4\pi(1-\cos r)$.

Now suppose the equality occurs in $(\ref{eq:bound.func})$ and $\gamma=0$ and $\ell=1$. Then, in particular, all inequalities above are equalities. From (\ref{eq:Hersch.Dir}), we conclude that $\mathrm{deg}(u)=1$.  Equalities in (\ref{ineq1}) and (\ref{ineq2}) imply that $u_0$ is an eigenfunction with eigenvalue $\sigma_{0}$, while $u_1$ and $u_2$ are eigenfunctions with eigenvalue $\sigma_{1}$. In particular, using that $\displaystyle\sum_{i=0}^{2}u_{i}^{2} = 1$, we conclude
\begin{align*}
0 &= \Delta_{g}\Big(\sum_{i=0}^{2}u_{i}^{2}\Big) = 2\sum_{i=0}^{2}u_{i}\Delta_{g}\,u_{i} + 2\sum_{i=0}^{2}\vert \nabla^{g}u_{i}\vert^{2}_{g} = - 4 + 2\sum_{i=0}^{2}\vert \nabla^{g}u_{i}\vert^{2}_{g}.
\end{align*}
Thus,
\begin{align}
\sum_{i=0}^{2}\vert \nabla^{g}u_{i}\vert^{2}_{g} &= 2.\label{eq:disk1}
\end{align}

Combining \eqref{eq:conformal.grad} and \eqref{eq:disk1}, it follows that $f\equiv 1$. Therefore, $(du)^{*}g_{\mathbb{B}^{2}_{r}} = g.$
\end{proof}

\section{A maximization problem}
\label{sec.max}

Let $\si$ be a compact orientable surface with  boundary and fix $r \in (0,\frac{\pi}{2})$. We define
\begin{align*}
\Theta_{r}^{\ast}(\si) &= \sup_{g \in \mathcal{M}(\si)} \Theta_{r}(\si,g),
\end{align*}
where $\Theta_{r}(\si,g)$ is the functional defined in \eqref{eq:funct_r}. For simplicity, given $u \in C^{\infty}(\pa\si)$ and $g \in \mathcal{M}(\si)$, we will also denote by $u$ its extension to $\si$ satisfying $\Delta_g u + 2u = 0$.

We would like to characterize the metrics realizing $\Theta_{r}^{\ast}(\si)$. The next proposition shows that a maximizing sequence of metrics cannot converge smoothly to the boundary of $\mathcal{M}(\Sigma)$.

\begin{proposition}
Let $\{g_{k}\}_{k \in \mathbb{N}}$ be a sequence in $\mathcal{M}(\si)$ converging smoothly to $g$ such that $\lambda_{0}^{D}(g) = 2$. Then, $\sigma_0(g_k) \to -\infty$.  
\end{proposition}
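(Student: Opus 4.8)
The plan is to use the variational characterization \eqref{var-sigma)_alt} of $\sigma_0(g_k) = \sigma_0(g_k,2)$, which is available precisely because each $g_k \in \mathcal{M}(\si)$ satisfies $\lambda_0^D(g_k) > 2$. The idea is to produce a \emph{single} family of explicit test functions, built from the limit metric $g$, whose $g_k$-Rayleigh quotients can be driven to $-\infty$. Since $\lambda_0^D(g) = 2$, there is a first Dirichlet eigenfunction $\psi \in C^\infty(\si)$ with $\Delta_g\psi + 2\psi = 0$, $\psi = 0$ on $\pa\si$ and $\psi > 0$ in the interior; normalize $\int_\si\psi^2\,dA_g = 1$, so that $\int_\si|\nabla^g\psi|^2_g\,dA_g = 2$. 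Thus $\psi$ makes the Dirichlet-type form $\int_\si(|\nabla^g u|^2_g - 2u^2)\,dA_g$ degenerate, but $\psi$ has zero boundary trace and so is not itself admissible for \eqref{var-sigma)_alt}; a small constant shift repairs this.

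For $\epsilon > 0$, set $u_\epsilon = \psi + \epsilon$. Then $\tr u_\epsilon = \epsilon \neq 0$, so $u_\epsilon$ is admissible. Since $\nabla^g\epsilon = 0$ and $\psi = 0$ on $\pa\si$, a direct computation gives
$$\int_\si\big(|\nabla^g u_\epsilon|^2_g - 2u_\epsilon^2\big)\,dA_g = -4\epsilon\int_\si\psi\,dA_g - 2\epsilon^2|\si|_g, \qquad \int_{\pa\si}u_\epsilon^2\,dL_g = \epsilon^2|\pa\si|_g,$$
so the $g$-Rayleigh quotient is
$$R_g(u_\epsilon) = \frac{-4\int_\si\psi\,dA_g}{\epsilon\,|\pa\si|_g} - \frac{2|\si|_g}{|\pa\si|_g}.$$
Because $\psi > 0$ gives $\int_\si\psi\,dA_g > 0$, we have $R_g(u_\epsilon) \to -\infty$ as $\epsilon \to 0^+$.

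It remains to transfer this to the $g_k$ by a diagonal argument. Fix $\epsilon > 0$; then $u_\epsilon$ is a fixed smooth function, and since $g_k \to g$ smoothly the gradients, area forms and boundary length forms converge uniformly, so the numerator and denominator of $R_{g_k}(u_\epsilon)$ converge to those of $R_g(u_\epsilon)$. As the limiting denominator $\epsilon^2|\pa\si|_g$ is positive, $R_{g_k}(u_\epsilon) \to R_g(u_\epsilon)$ as $k \to \infty$. By \eqref{var-sigma)_alt} we have $\sigma_0(g_k) \leq R_{g_k}(u_\epsilon)$, hence for each fixed $\epsilon > 0$,
$$\limsup_{k\to\infty}\sigma_0(g_k) \leq R_g(u_\epsilon).$$
Letting $\epsilon \to 0^+$ then forces $\limsup_k\sigma_0(g_k) = -\infty$, i.e. $\sigma_0(g_k) \to -\infty$.

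\textbf{Main obstacle.} The argument is short; the only delicate point is the order of limits, since $\psi$ itself has zero trace and cannot be substituted directly. The diagonal structure (fix $\epsilon$, send $k \to \infty$, then send $\epsilon \to 0$) avoids any need for estimates uniform in $\epsilon$; the sole analytic input is the smooth convergence $g_k \to g$, which yields convergence of the Rayleigh quotients of each fixed test function.
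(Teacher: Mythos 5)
Your proof is correct, and it takes a genuinely different route from the paper. You argue directly: since $\lambda_{0}^{D}(g)=2$, the first Dirichlet eigenfunction $\psi$ of $g$ makes the quadratic form $\int_{\si}(|\nabla^{g}u|^{2}_{g}-2u^{2})\,dA_{g}$ vanish, and the shifted functions $u_{\epsilon}=\psi+\epsilon$ become admissible test functions for the characterization \eqref{var-sigma)_alt} with Rayleigh quotient of order $-\epsilon^{-1}$; the order of limits (fix $\epsilon$, let $k\to\infty$, then $\epsilon\to 0^{+}$) is handled cleanly and needs no estimates uniform in $\epsilon$. The paper instead argues by contradiction: assuming $\sigma_{0}(g_k)\geq -c$, it normalizes the first eigenfunctions $v_k$, extracts a weak $W^{1,2}$-limit via Rellich--Kondrachov and compactness of the trace operator, upgrades the limit to a smooth positive eigenfunction $v$ by elliptic regularity and the Hopf maximum principle, and only then brings in the Dirichlet eigenfunction $\psi$, writing $\psi=hv$ and using a Picone-type divergence identity to force $h\equiv 0$, a contradiction. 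Both arguments pivot on the same object $\psi$, but you use it at the outset as a test function while the paper uses it at the end to kill a hypothetical limit eigenfunction. Your approach is shorter and needs much weaker hypotheses --- since $|\nabla^{g_k}u_{\epsilon}|^{2}_{g_k}$, $dA_{g_k}$ and $dL_{g_k}$ depend pointwise on $g_k$, uniform ($C^{0}$) convergence of the metrics already suffices, and no compactness, regularity theory, or maximum principle is invoked. What the paper's longer route buys is additional spectral-continuity information of independent interest: whenever $\sigma_{0}(g_k)$ stays bounded, the eigenvalues and (normalized) eigenfunctions subconverge to solutions of the limiting Steklov problem. One small bookkeeping remark: your normalization $\int_{\si}\psi^{2}\,dA_{g}=1$ is harmless but unnecessary, and the only facts you actually use about $\psi$ are $\int_{\si}|\nabla^{g}\psi|^{2}_{g}\,dA_{g}=2\int_{\si}\psi^{2}\,dA_{g}$ (valid by integration by parts since $\psi$ vanishes on $\partial\si$) and $\int_{\si}\psi\,dA_{g}>0$.
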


\begin{proof}
Suppose, by contradiction, there exists $c>0$ such that $\sigma_{0}(g_k) \geq -c$,  for all $k$. Then, up to a subsequence, $\sigma_{0}(g_k) \to \sigma$, where $\sigma \geq -c$.

Let $v_k$ be a positive $\sigma_{0}(g_k)$-eigenfunction. Then, 
\begin{equation}\label{eq:eing_gk}
\begin{split}
&\Delta_{g_k}v_k + 2 v_k = 0,  \ \textrm{in}\ \si,\\
&\frac{\pa v_k}{\pa\nu} = \sigma_{0}(g_k)v_k,  \ \textrm{on}\ \partial\si,
\end{split}
\end{equation}
and, by the Hopf maximum principle, $\sigma_{0}(g_k) < 0$; in particular,  $\sigma \leq 0$.

We can assume, without loss of generality, that $\int_{\si}v_{k}^{2}dA_{g_k} = 1$. Using \eqref{eq:eing_gk} and integrating by parts, we obtain
\begin{align*}
\int_{\si} |\nabla^{g_k} v_k|^{2}_{g_k}\,dA_{g_k} = 2\int_{\si} v^{2}_{k}\,dA_{g_k} + \sigma_{0}(g_k)\int_{\partial\si} v^{2}_{k}\,dL_{g_k} < 2.
\end{align*}
Thus, using the fact that $g_k$ converges smoothly to $g$, we get that $\{v_k\}_{k \in \mathbb{N}}$ is bounded in $W^{1,2}(\si,g)$. By the Rellich-Kodrachov Theorem and the compactness of the trace operator, we conclude that, up to a subsequence, $v_k \rightharpoonup v$ weakly in $W ^{1,2}(\si, g)$, $v_k \rightarrow v$ strongly in $L^{2}(\si, g)$ and $v_k \rightarrow v$ strongly in $L^{2}(\pa\si, g)$. 

By \eqref{eq:eing_gk}, for any $\phi \in C^{\infty}(\si)$ we have
$$\int_{\si} \big[g_{k}(\nabla^{g_k}v_k,\nabla^{g_k}\phi) - 2v_{k}\phi\big] dA_{g_k} = \sigma_{0}(g_k)\int_{\pa\si}v_{k}\phi\, dL_{g_k}.$$
Using the weak convergence $v_k \rightharpoonup v$ in $W^{1,2}(\Sigma, g)$ and the fact $g_k \to g$ smoothly, we obtain
$$\int_{\si} \big[g(\nabla^{g}v,\nabla^{g}\phi) - 2v\phi\big] dA_{g} = \sigma \int_{\pa\si}v\phi\, dL_{g}.$$
Hence, $v$ is a weak solution of
\begin{equation*}
\begin{split}
\Delta_{g}v + 2 v &= 0,\ \textrm{in}\ \si,\\
\frac{\pa v}{\pa\nu} &= \sigma v,\ \textrm{on}\ \partial\si,
\end{split}
\end{equation*}
and, by elliptic regularity, it is smooth on $\si$ (including the boundary). 

Since $v_k > 0$, we have $v \geq 0$. Observe that $\vert\vert v\vert\vert_{L^{2}(\si, g)} =1$, since $v_k\to v$ strongly in $L^2(\si, g)$ and $\vert\vert v_k\vert\vert_{L^{2}(\si, g)} =1$. In particular, $v$ can not be identically zero. Moreover, since $\Delta_{g}v = -2 v$, $v$ can not be constant. Hence, by the Hopf maximum principle, we get $\sigma < 0$ and $v > 0$ in $\si$.

Finally, since $\lambda_{0}^{D}(g) = 2$, we can consider a first eigenfunction $\psi$, i.e.,
\begin{equation*}
\begin{split}
\Delta_{g}\psi + 2\psi &= 0,\ \textrm{in}\ \si,\\
\psi &= 0,\ \textrm{on}\ \partial\si,
\end{split}
\end{equation*}
with $\psi > 0$ in $\si\backslash\pa\si$. We can write $\psi = hv$, where $h\vert_{\pa\si} \equiv 0$ and $h > 0$ in $\si\backslash\pa\si$. Applying the divergence theorem to the vector field $h^{2}v\nabla^{g}v$, we get
$$0 = \int_{\si}\big(|\nabla^{g} \psi|^{2}_{g}- 2\psi^{2}\big) dA_{g} = \int_{\si}v^{2}|\nabla^{g} h|^{2}_{g}\,dA_{g} \geq 0.$$
Thus, $h \equiv 0$, which is a contradiction. Therefore, we conclude that $\sigma_{0}(g_k) \to -\infty$.
\end{proof}

Now we will study the regularity of $\Theta_{r}(\si,g)$ with respect to $g$.

\begin{lemma}\label{lem:lips}
Let $g \in \mathcal{M}(\si)$ and consider a smooth path of metrics $t \mapsto g(t)$ such that $g(0) = g$ and $g(t) \in \mathcal{M}(\si)$ for all $ t\in [-\eta,\eta]$. Then $t \mapsto \sigma_{1}\big(g(t)\big)$ is Lipschitz in $[-\eta,\eta]$.
\end{lemma}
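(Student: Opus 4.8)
The plan is to realize $\sigma_{1}(g(t))$ through a variational (min--max) quotient and to compare the quotients for two nearby parameters using a uniform comparison of the metrics along the path. Since $[-\eta,\eta]$ is compact, $t\mapsto g(t)$ is smooth, and each $g(t)\in\mathcal M(\si)$, continuity of $\lambda_0^D$ gives a \emph{uniform} gap $\lambda_0^D(g(t))\ge 2+2\epsilon_0>2$ on the whole interval. Writing
\[
A_g(u)=\int_\si|\nabla^{g}u|^2_g\,dA_g,\quad B_g(u)=\int_\si u^2\,dA_g,\quad D_g(u)=\int_{\pa\si}u^2\,dL_g,\quad R_g(u)=\frac{A_g(u)-2B_g(u)}{D_g(u)},
\]
smoothness of the path and compactness of $[-\eta,\eta]$ yield a constant $C>0$ such that, for all $s,t\in[-\eta,\eta]$, all $u\in W^{1,2}(\si)$, and each $X\in\{A,B,D\}$,
\[
e^{-C|s-t|}\,X_{g(t)}(u)\ \le\ X_{g(s)}(u)\ \le\ e^{C|s-t|}\,X_{g(t)}(u).
\]
These follow from the pointwise comparison of $dA_{g(s)},dL_{g(s)}$ and of the quadratic forms $g(s)^{ij}\partial_i u\,\partial_j u$, all controlled by the eigenvalues of $g(s)^{-1}g(t)$, which are uniformly close to $1$ on the compact interval.

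Next I would use the alternative characterization \eqref{var-sigma)_alt}, rephrased as a two-dimensional min--max: since $g(t)\in\mathcal M(\si)$,
\[
\sigma_1(g(t))=\min_{\substack{E\subset W^{1,2}(\si)\\ \dim E=2,\ \tr|_E\ \text{injective}}}\ \max_{0\ne u\in E}R_{g(t)}(u),
\]
and the minimum is attained on the subspace $E_t$ spanned by the $(\Delta_{g(t)}+2)$-harmonic extensions of a $\sigma_0(g(t))$- and a $\sigma_1(g(t))$-eigenfunction (this is the spectral theory of $\mathcal D_2$ from Section \ref{sec:Steklov}; the trace is injective on $E_t$ since the boundary traces of these eigenfunctions are independent). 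Using $E_t$ as a competitor for $g(s)$ gives $\sigma_1(g(s))\le\max_{0\ne u\in E_t}R_{g(s)}(u)$, so it suffices to bound $R_{g(s)}(u)-R_{g(t)}(u)$ uniformly on $E_t$.

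For this I would use the algebraic identity
\[
R_{g(s)}(u)-R_{g(t)}(u)=\frac{A_{g(s)}(u)-A_{g(t)}(u)-2\big(B_{g(s)}(u)-B_{g(t)}(u)\big)}{D_{g(s)}(u)}+R_{g(t)}(u)\,\frac{D_{g(t)}(u)-D_{g(s)}(u)}{D_{g(s)}(u)},
\]
which together with the multiplicative comparison yields
\[
|R_{g(s)}(u)-R_{g(t)}(u)|\le\big(e^{C|s-t|}-1\big)e^{C|s-t|}\left(\frac{A_{g(t)}(u)+2B_{g(t)}(u)}{D_{g(t)}(u)}+|R_{g(t)}(u)|\right).
\]
On $E_t$ one has $\frac{A_{g(t)}(u)+2B_{g(t)}(u)}{D_{g(t)}(u)}=R_{g(t)}(u)+4\,\frac{B_{g(t)}(u)}{D_{g(t)}(u)}$, where $\frac{B_{g(t)}(u)}{D_{g(t)}(u)}=\|\widehat u\|_{L^2(\si)}^2/\|u\|_{L^2(\pa\si)}^2$ for the harmonic extension $\widehat u$. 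Because $\lambda_0^D(g(t))>2$, the $(\Delta_{g(t)}+2)$-harmonic extension is a bounded operator $L^2(\pa\si)\to L^2(\si)$ whose norm depends continuously on $t$, hence is uniformly bounded by some $C_0$ on $[-\eta,\eta]$; this gives $B_{g(t)}/D_{g(t)}\le C_0^2$ on $E_t$, uniformly in $t$. The same extension bound forces $\sigma_0(g(t))\ge -2C_0^2$ (from $Q_{g(t)}\ge-2B_{g(t)}$), while a fixed two-dimensional competitor bounds $\sigma_1(g(t))$ from above; thus $|R_{g(t)}|\le M$ on $E_t$ with $M$ independent of $t$. Collecting these bounds gives $\sigma_1(g(s))\le\sigma_1(g(t))+C'|s-t|$, and exchanging the roles of $s$ and $t$ (competing with $E_s$, with the same uniform constants) yields the reverse inequality, hence $|\sigma_1(g(s))-\sigma_1(g(t))|\le C'|s-t|$.

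The step I expect to be the \emph{main obstacle} is the uniform control of the denominator of the Rayleigh quotient, i.e. preventing the near-optimal test functions from concentrating in the interior so that $\|u\|_{L^2(\pa\si)}$ degenerates. This is exactly where the uniform spectral gap $\lambda_0^D(g(t))\ge 2+2\epsilon_0$ is indispensable: it turns the harmonic-extension estimate into a bound on $\|\widehat u\|_{L^2(\si)}/\|u\|_{L^2(\pa\si)}$ that is uniform along the whole path, and it is what makes the constant $C'$ finite over $[-\eta,\eta]$. The remaining ingredients (the metric comparison and the min--max identity) are routine once this uniformity is in place.
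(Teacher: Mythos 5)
Your proposal is correct and takes essentially the same approach as the paper: both test the metric $g(s)$ with the two-dimensional span of the $(\Delta_{g(t)}+2)$-extensions of the first two eigenfunctions at parameter $t$, compare the Rayleigh quotients via uniform equivalence of the metrics along the compact path, and reduce everything to uniform bounds on the eigenvalues and on the ratio $\|\widehat u\|_{L^2(\Sigma)}^{2}/\|u\|_{L^2(\partial\Sigma)}^{2}$ for solutions of $\Delta_{g(t)}\widehat u+2\widehat u=0$. The one ingredient you assert rather than prove---the uniform boundedness along the path of the extension operator $L^2(\partial\Sigma)\to L^2(\Sigma)$---is precisely what the paper supplies by quoting the Arendt--Mazzeo estimates (their Lemma 2.3 and Proposition 3.3, the paper's inequalities (I) and (II)), and your uniform gap $\lambda_0^{D}(g(t))\geq 2+2\epsilon_0$ is indeed the correct reason that bound holds.
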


\begin{proof}
Denote $\sigma_i(t)=\sigma_i(g(t)), i=0,1,$ and assume they are uniformly bounded; that is, there exists a constant $C>0$ (that does not depend on $t$) such that $\vert\sigma_i(g(t))\vert\leq C$ for all $t\in  [-\eta,\eta]$ - we will show this claim at the end of the proof.

Fix $t_1, t_2 \in   [-\eta,\eta]$ and take $u_0$ and $u_1$ two $g(t_1)$-eigenfunctions with eigenvalues $\sigma_0(t_1)$ and $\sigma_1(t_1)$, respectively. Consider $V=\mbox{span}\{u_0, u_1\}$. This is a subspace of dimension 2, so we can find at least one $v\in V$ such that $v\neq0$ is $L^2(\partial\si, g(t_2))$-orthogonal to a first $\sigma_0(g(t_2))$-eigenfunction. 

Define 
$$
\mathcal {Q}_{g(t)}(v, v)=\frac{\displaystyle\int_{\si} |\nabla^{g(t)} v|^{2}_{g(t)}\,dA_{g(t)} - 2\int_{\si} v^{2}\,dA_{g(t)}}{\displaystyle\int_{\partial\si} v^2\,dL_{g(t)}}.
$$
We can write $\mathcal {Q}_{g(t_2)}(v, v)$ as
\begin{equation*}
\mathcal {Q}_{g(t_2)}(v, v)=\frac{\displaystyle\int_{\partial\si} v^2\,dL_{g(t_1)}}{\displaystyle\int_{\partial\si} v^2\,dL_{g(t_2)}}\cdot\frac{\displaystyle\int_{\si} |\nabla^{g(t_2)} v|^{2}_{g(t_2)}\,dA_{g(t_2)} - 2\int_{\si} v^{2}\,dA_{g(t_2)}}{\displaystyle\int_{\partial\si} v^2\,dL_{g(t_1)}}.
\label{lips:eq2}
\end{equation*}

Since $\{g(t)\}$ is a smooth path of metrics on a compact manifold, we know that there is a constant $K_0 > 0$ (which does not depend on $t_1, t_2$) such that
\begin{equation*}
\vert g(t_2)-g(t_1)\vert_{g}\leq K_0\vert t_2-t_1\vert,
\label{lips:metric}
\end{equation*}
and the metrics are uniformly equivalent.

Hence, there exist $K_1 > 0$, $K_2 > 0$ and  $K_3 > 0$ (which do not depend on $t_1, t_2$) such that
\begin{align}
\int_{\si} |\nabla^{g(t_2)} v|^{2}_{g(t_2)}\,dA_{g(t_2)}
&\leq (1+K_1\vert t_2-t_1\vert)\displaystyle\int_{\si} |\nabla^{g(t_1)} v|^{2}_{g(t_1)}\,dA_{g(t_1)},
\label{lips:eq17}\\
\int_{\si} v^{2}\,dA_{g(t_2)}&\geq (1-K_2\vert t_2-t_1\vert)\int_{\si} v^{2}\,dA_{g(t_1)},
\label{lips:eq15}\\
\int_{\partial\si} v^{2}\,dL_{g(t_1)}&\leq (1+K_3\vert t_1-t_2\vert)\int_{\partial\si} v^{2}\,dL_{g(t_2)}.\label{lips:eq13}
\end{align}

Inequalities \eqref{lips:eq17} and \eqref{lips:eq15} imply that for some $K > 0$ we have
\begin{align}
\nonumber\mathcal {Q}_{g(t_2)}(v, v)&\leq \frac{\displaystyle\int_{\partial\si} v^2\,dL_{g(t_1)}}{\displaystyle\int_{\partial\si} v^2\,dL_{g(t_2)}}\cdot \frac{\displaystyle\int_{\si} |\nabla^{g(t_1)} v|^{2}_{g(t_1)}\,dA_{g(t_1)} - 2\int_{\si} v^{2}\,dA_{g(t_1)}}{\displaystyle\int_{\partial\si} v^2\,dL_{g(t_1)}}\\
&+\frac{\displaystyle\int_{\partial\si} v^2\,dL_{g(t_1)}}{\displaystyle\int_{\partial\si} v^2\,dL_{g(t_2)}}\cdot\frac{K\vert t_2-t_1\vert\left(\displaystyle\int_{\si} |\nabla^{g(t_1)} v|^{2}_{g(t_1)}\,dA_{g(t_1)} +\int_{\si} v^{2}\,dA_{g(t_1)}\right)}{\displaystyle\int_{\partial\si} v^2\,dL_{g(t_1)}}.\label{lips:eq3}
\end{align}

Note that since $v\in V$, there are constants $a,b,$ such that  $v=au_0+bu_1$. In particular,
\begin{align*}
&\Delta_{g(t_1)}v+2v=0 \, \mbox{ in } \, \si,\\
& \frac{\partial v}{\partial\nu_{g(t_1)}}=a\sigma_0(t_1)u_0+b\sigma_1(t_1)u_1 \, \mbox{ on }\, \partial\si.
\end{align*}
Using the fact that $u_0$ and $u_1$ are $L^2(\partial\si, g(t_1))$-orthogonal (since they are eigenfunctions with different eigenvalues), we obtain
\begin{align}
\nonumber\displaystyle\int_{\si} |\nabla^{g(t_1)} v|^{2}_{g(t_1)}\,dA_{g(t_1)} &=-\int_{\si}v\Delta_{g(t_1)}vdA_{g(t_1)}+\int_{\partial\si}v\frac{\partial v}{\partial\nu}dL_{g(t_1)}\\
&=2\int_{\si}v^2dA_{g(t_1)}+a^2\sigma_0(t_1)\int_{\partial\si}u_0^2dL_{g(t_1)}+b^2\sigma_1(t_1)\int_{\partial\si}u_1^2dL_{g(t_1)}\label{lips:eq5}
\end{align}
and 
\begin{align}
\displaystyle\int_{\partial\si} v^{2}\,dL_{g(t_1)} &=a^2\int_{\partial\si}u_0^2dL_{g(t_1)}+b^2\int_{\partial\si}u_1^2dL_{g(t_1)}.
\label{lips:eq6}
\end{align}
Thus, \eqref{lips:eq5} and \eqref{lips:eq6} give that
\begin{equation}
\displaystyle\int_{\si} |\nabla^{g(t_1)} v|^{2}_{g(t_1)}\,dA_{g(t_1)}\leq C_1\left(\int_{\si} v^{2}\,dA_{g(t_1)}+ \int_{\partial\si} v^{2}\,dL_{g(t_1)}\right),\label{lips:eq10}
\end{equation}
where we are using that the eigenvalues are uniformly bounded from above. (In this proof all constants $C_i$'s do not depend on $t_1$ or $t_2$ and this remark will be omitted).

By Lemma 2.3 in \cite{AM2}, we can conclude that, for any $\delta>0$, there is a constant $C_\delta>0$ such that, for any $u \in C^\infty(\si),$ it holds
\begin{align}
\int_{\partial\si}u^2dL_{g}\leq \delta\int_{\si} |\nabla^{g} u|^{2}_{g}\,dA_{g}+C_\delta \int_{\si}u^2dA_{g}.\label{lips:eq7}\tag{I}
\end{align} 
Moreover, as in the proof of Proposition 3.3 in \cite{AM2}, for any $\delta>0$ there exists $C'_\delta>0$ such that for any $u \in C^\infty(\si)$ satisfying $\Delta_{g(t)}u+2u=0$ for some $t\in[-\eta,\eta],$ it holds that
\begin{align}
\int_{\si}u^2dA_{g}\leq  \delta\int_{\si} |\nabla^{g} u|^{2}_{g}\,dA_{g}+C'_{\delta} \int_{\partial\si}u^2dL_{g}.\label{lips:eq8}\tag{II}
\end{align}

Using \eqref{lips:eq7} and the fact that the metrics $g(t)$ are uniformly equivalent, we get
\begin{align}
 \nonumber\int_{\partial\si} v^{2}\,dL_{g(t_1)}&\leq C_2\int_{\partial\si} v^{2}\,dL_{g}\leq C_2\left(\delta\int_{\si} |\nabla^{g} v|^{2}_{g}\,dA_{g}+C_\delta \int_{\si}v^2dA_{g}\right)\\
&\leq C_2\left(C_3\delta\int_{\si} |\nabla^{g(t_1)} v|^{2}_{g(t_1)}\,dA_{g(t_1)}+C_4C_\delta \int_{\si}v^2dA_{g(t_1)}\right)\label{lips:eq9}.
\end{align}
Choosing $\delta>0$ sufficiently small and applying \eqref{lips:eq9} into \eqref{lips:eq10}, we obtain 
\begin{equation}
\displaystyle\int_{\si} |\nabla^{g(t_1)} v|^{2}_{g(t_1)}\,dA_{g(t_1)}\leq C_5\displaystyle\int_{\si}v^{2}\,dA_{g(t_1)}.
\label{lips:eq11}
\end{equation}

Now using \eqref{lips:eq8} and the fact that the metrics $g(t)$ are uniformly equivalent, we get
\begin{align*}
\nonumber C_6^{-1}\int_{\si}v^{2}\,dA_{g(t_1)}&\leq \int_{\si}v^{2}\,dA_{g}\leq \delta\int_{\si} |\nabla^{g} v|^{2}_{g}\,dA_{g}+C'_\delta \int_{\partial\si}v^2dL_{g}\\
\nonumber&\leq \delta C_7\int_{\si} |\nabla^{g(t_1)} v|^{2}_{g(t_1)}\,dA_{g(t_1)}+C'_\delta C_8 \int_{\partial\si}v^2dL_{g(t_1)}\\
&\leq  \delta C_7C_5\int_{\si} v^{2}\,dA_{g(t_1)}+C'_\delta C_8 \int_{\partial\si}v^2dL_{g(t_1)}.
\end{align*}

Choosing $\delta>0$ sufficiently small, we are able to conclude that
\begin{equation}
\displaystyle\int_{\si} v^{2}\,dA_{g(t_1)}\leq C_9\displaystyle\int_{\partial\si}v^{2}\,dL_{g(t_1)}.
\label{lips:eq12}
\end{equation}

Since
$\displaystyle \displaystyle\int_{\partial\si} v^2\,dL_{g(t_1)}\leq C_{10}\int_{\partial\si}  v^{2}\,dL_{g(t_2)},$  putting \eqref{lips:eq13}, \eqref{lips:eq11}, \eqref{lips:eq12} into \eqref{lips:eq3}, we get
\begin{align*}
\nonumber\mathcal {Q}_{g(t_2)}(v, v)&\leq  \frac{\displaystyle\int_{\si} |\nabla^{g(t_1)} v|^{2}_{g(t_1)}\,dA_{g(t_1)} - 2\int_{\si} v^{2}\,dA_{g(t_1)}}{\displaystyle\int_{\partial\si} v^2\,dL_{g(t_1)}}\\
\nonumber&\quad+K_3\vert t_1-t_2\vert\cdot  \frac{\displaystyle\int_{\si} |\nabla^{g(t_1)} v|^{2}_{g(t_1)}\,dA_{g(t_1)} - 2\int_{\si} v^{2}\,dA_{g(t_1)}}{\displaystyle\int_{\partial\si} v^2\,dL_{g(t_1)}}\\
\nonumber&\quad+C_{10}K\vert t_2-t_1\vert\cdot\frac{\left(\displaystyle C_5\int_{\si}v^2\,dA_{g(t_1)} +\int_{\si} v^{2}\,dA_{g(t_1)}\right)}{\displaystyle C_9^{-1}\int_{\si} v^2\,dA_{g(t_1)}}\\
&=\mathcal {Q}_{g(t_1)}(v, v)+K_3\vert t_1-t_2\vert\mathcal {Q}_{g(t_1)}(v, v)+\tilde{K}\vert t_1-t_2\vert.
\end{align*}

It follows from equalities \eqref{lips:eq5} and \eqref{lips:eq6} that $\mathcal {Q}_{g(t_1)}(v, v) \leq \sigma_1(t_1)$. Moreover, since $\sigma_i(t)$ is uniformly bounded, we also know that $\mathcal {Q}_{g(t_1)}(v, v)\leq C$. Hence, by the orthogonality of $v$ to the $\sigma_0(g(t_2))$-eigenspace, we get
\begin{align*}
\sigma_1(t_2)&\leq \mathcal {Q}_{g(t_2)}(v, v)\\
&\leq \mathcal {Q}_{g(t_1)}(v, v)+K_3\vert t_1-t_2\vert\mathcal {Q}_{g(t_1)}(v, v)+\tilde{K}\vert t_1-t_2\vert\\
&\leq \sigma_1(t_1)+\bar{K}\vert t_1-t_2\vert.
\end{align*}
That is, 
$$
\sigma_1(t_2)-\sigma_1(t_1)\leq \bar{K}\vert t_1-t_2\vert,
$$
where $\bar{K}$ is a constant that does not depend on $t_1, t_2.$

Applying the same arguments, reversing the roles of $t_1$ and $t_2$, we will obtain the other inequality to conclude that
$$
\vert \sigma_1(t_2)-\sigma_1(t_1)\vert\leq C\vert t_1-t_2\vert.
$$

Now let us prove that in fact the eigenvalues $\sigma_i(t)$ are uniformly bounded, $i=0,1.$

Let $W$ be a subspace of dimension 2 such that $W\setminus\{0\}\subset\{w\in C^\infty(\si); w\not\equiv0 \mbox{ on } \partial\si\}$. Hence, we can find at least one test function in $W$ for bounding $\sigma_1(t)$. Then,
\begin{align*}
\sigma_1(t)&\leq \sup_{w\in W\setminus\{0\}} \frac{\displaystyle\int_{\si} |\nabla^{g(t)} w|^{2}_{g(t)}\,dA_{g(t)} - 2\int_{\si} w^{2}\,dA_{g(t)}}{\displaystyle\int_{\partial\si} w^2\,dL_{g(t)}}\\
&\leq \sup_{w\in W\setminus\{0\}} \frac{\displaystyle\int_{\si} |\nabla^{g(t)} w|^{2}_{g(t)}\,dA_{g(t)}}{\displaystyle\int_{\partial\si} w^2\,dL_{g(t)}}\leq c_0 \sup_{w\in W\setminus\{0\}} \frac{\displaystyle\int_{\si} |\nabla^{g} w|^{2}_{g}\,dA_{g}}{\displaystyle\int_{\partial\si} w^2\,dL_{g}}=c_1.
\end{align*}

On the other hand, using the uniform equivalence of the metrics and inequality \eqref{lips:eq8}, we obtain for any $v\in C^{\infty}(\si)$ with $v\not\equiv 0$ on $\partial\si$ and $\Delta_{g(t)}v+2v=0$, 
\begin{align*}
 \frac{\displaystyle\int_{\si} |\nabla^{g(t)} v|^{2}_{g(t)}\,dA_{g(t)} - 2\int_{\si} v^{2}\,dA_{g(t)}}{\displaystyle\int_{\partial\si} v^2\,dL_{g(t)}}&\geq  \frac{\displaystyle c_1\int_{\si} |\nabla^{g} v|^{2}_{g}\,dA_{g} - c_2\int_{\si} v^{2}\,dA_{g}}{\displaystyle\int_{\partial\si} v^2\,dL_{g(t)}}\\
&\geq  \frac{\displaystyle (c_1-c_2\delta)\int_{\si} |\nabla^{g} v|^{2}_{g}\,dA_{g} - c_2c'_\delta\int_{\partial\si} v^{2}\,dL_{g}}{\displaystyle\int_{\partial\si} v^2\,dL_{g(t)}}\\
&\geq  \frac{\displaystyle - c_2c'_\delta\int_{\partial\si} v^{2}\,dL_{g}}{\displaystyle\int_{\partial\si} v^2\,dL_{g(t)}} \geq -c_3,
\end{align*}
where in the third inequality we chose $\delta>0$ sufficiently small so that $c_1-c_2\delta\geq 0$, and $c_3$ is a constant that does not depend on $t$. In particular, applying this when $v$ is a $\sigma_0(t)$-eigenfunction, we get
$$
\sigma_0(t)\geq -c_3.
$$
\end{proof}

\begin{remark}
The same arguments contained in the proof of Lemma \ref{lem:lips} can be applied, by choosing $V=\mbox{span}\{u_0, u_1, \ldots, u_k\}$ and $W$ with dimension $k+1,$ to show that the eigenvalues $\sigma_j(t)$  are Lipschitz for any $j$; and, moreover, given $k,$ there is a constant $c_k$  not depending on $t$ such that $-c_k\leq \sigma_j(t)\leq c_k$, for any $j=0, 1, \ldots, k.$
\end{remark}

Fix $g \in \mathcal{M}(\si)$ and consider a smooth path of metrics $g(t)$ as in the conclusion of the previous lemma. Choose a smooth path of functions $u_i(t),\,i=0,1$, such that $u_0(t)$ is an eigenfunction associated to $\sigma_0(t)$, $u_1(0)=u_1$ is an eigenfunction associated to $\sigma_{1}(0)$, and $u_1(t)$ is orthogonal to $u_0(t)$ for all $t$. Denote $h=\displaystyle\frac{dg}{dt}\Big\vert_{t=0}$, $v_1 = u_{1}'(0)$ and define
\begin{equation}
f_{1}(t) = \int_{\si} \big|\nabla^{g(t)}u_{1}(t)\big|^{2}_{g(t)}\,dA_{g(t)} -  2\int_{\si} u_{1}^{2}(t)\,dA_{g(t)} - \sigma_{1}(t) \int_{\partial\si} u_{1}^{2}(t)\,dL_{g(t)}.
\label{def_f}
\end{equation}

Recall that
\begin{align}\label{eq:der.volform}
\frac{d}{dt}\bigg\vert_{t=0}dA_{g(t)} = \frac{1}{2}\langle g,h\rangle\, dA_{g}, \quad \frac{d}{dt}\bigg\vert_{t=0}dL_{g(t)} = \frac{1}{2}h(T,T)\,dL_{g},
\end{align}
where $T$ is a $g$-unit tangent vector field to $\partial\si$.

Observe that by the definition of the eigenvalue, we have $f_1(t)\geq 0$ for any $t$, and $f_1(0)=0.$ Hence, if we assume that $\Theta_{r}(\si,g(t))$ is differentiable at $t=0$, then, in particular, $\sigma_1(t)$ is also differentiable at $t=0$ and $f_1$ has a critical point at $t=0.$ Thus,
\begin{align*}
0 = f_{1}'(0) &= 2\int_{\si} \Big[g\big(\nabla^{g} u_1,\nabla^{g} v_1\big) -  2 u_1 v_1\Big]dA_{g}  - 2\,\sigma_{1}(0) \int_{\partial\si} u_{1}v_1\,dL_{g}\\
&\quad + \int_{\si} \left\langle \frac{1}{2}\Big(\big|\nabla^{g} u_{1}\big|^{2}_{g}  -  2 u_{1}^{2}\Big)g - du_1\otimes du_1,h\right\rangle dA_{g} - \frac{1}{2}\,\sigma_{1}(0) \int_{\partial\si} u_{1}^{2}h(T,T)\,dL_{g}\\
&\quad - \big(\sigma_{1}\big)'(0) \int_{\partial\si} u_{1}^{2}(t)\,dL_{g},
\end{align*}
Using the fact that $u_1(0)$ is an eigenfunction associated to $\sigma_{1}(0)$, we have 
$$\int_{\si} g\big(\nabla^{g} u_1,\nabla^{g} v_1\big)\,dA_{g} - 2\int_{\si}  u_1 v_1 dA_{g}  = \sigma_{1}(0) \int_{\partial\si} u_{1}v_1\,dL_{g}.$$
Hence, choosing $u_1(0)$ such that $\vert\vert u_1\vert\vert_{L^{2}(\partial\si,g)} = 1$, it follows 
\begin{equation}\label{deriv_sigma}
\big(\sigma_{1}\big)'(0) = \int_{\si} \left\langle \frac{1}{2}\Big(\big|\nabla^{g} u_{1}\big|^{2}_{g}  -  2 u_{1}^{2}\Big)g - du_1\otimes du_1,h\right\rangle dA_{g} - \frac{1}{2}\,\sigma_{1}(g) \int_{\partial\si} u_{1}^{2}h(T,T)\,dL_{g}.
\end{equation}

\begin{remark}
Since the eigenvalue $\sigma_0$ is simple, one can use eigenvalue perturbation theory to show that $t \mapsto \sigma_{0}(t)$ is differentiable for $t$ small. Defining a function $f_{0}(t)$ exactly as we did for $f_{1}(t)$ in {\rm(\ref{def_f})} $-$ using $u_0(t)$ instead of $u_1(t)$ $-$ and doing the same computations as above, we will get an equivalent formula for $(\sigma_0)'(0)$ as in {\rm(\ref{deriv_sigma})}. 
\end{remark}

Therefore, if $\Theta_{r}(t) := \Theta_{r}\big(\si,g(t)\big)$ is differentiable at $t=0$, using \eqref{eq:der.volform}, \eqref{deriv_sigma}, and the previous remark, we obtain
\begin{align*}
\Theta_{r}'(0) = Q_h(u_1) &:= \int_{\si} \Big\langle \tau\big(\cos r \, u_0\big)  +\tau\big(\sin r \, u_1\big) +  \, g,h\Big\rangle\, dA_{g} \\ 
&\quad\ + \int_{\partial\si} F\big(\cos r \, u_0,\sin r \, u_1\big) h(T,T)\,dL_{g} ,
\end{align*}
where
\begin{align*}
\tau(v) &= \frac{\vert\partial\si\vert_{g}}{2}\big(|\nabla^{g} v|^{2}_{g} -  2 v^{2}\big)g - \vert\partial\si\vert_{g}\, dv\otimes dv,\\
F\big(v_{0},v_{1}\big) &= \frac{\sigma_{0}(g)}{2}\big(\cos^{2} r - |\partial\si|_{g} v_{0}^{2}\big) + \frac{\sigma_{1}(g)}{2}\big(\sin^{2} r - |\partial\si|_{g} v_{1}^{2}\big).\label{eq:def.F}
\end{align*}

Let $V_{k}(g)$ be the eigenspace associated to $\sigma_{k}(g), \, k=0,1$. In the following, $u_0$ denotes the unique positive $\sigma_{0}(g)$-eigenfunction such that $\vert\vert u_0\vert\vert_{L^{2}(\partial\si,g)} = 1$. Also, consider the Hilbert space
$$\mathcal{H}_{g} := L^2\big(S^2(\si),g\big)\times L^2(\partial\si,g),$$
where $S^2(\si)$ is the space of symmetric $(0,2)$-tensor fields on $\si$.

\begin{lemma}\label{lem:HB}
Suppose $g \in \mathcal{M}(\si)$ satisfies $\Theta_{r}(\si,g) = \Theta_{r}^{\ast}(\si)$. Then, for any $(h,\psi) \in \mathcal{H}_{g}$, there exists $u_1 \in V_1(g)$ such that $\vert\vert u_1\vert\vert_{L^{2}(\partial\si,g)} = 1$ and 
$$\Big\langle(h,\psi),\Big(\tau\big(\cos r \, u_0\big)  + \tau\big(\sin r \, u_1\big) +  \,g,F\big(\cos r \, u_{0},\sin r \, u_{1}\big)\Big)\Big\rangle_{\mathcal{H}_{g}} = 0.$$
\end{lemma}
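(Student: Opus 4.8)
The plan is to deduce the statement from the maximality of $g$ by a first-variation argument, after reducing it to a connectedness observation about the map
$$u_1 \longmapsto \Phi(u_1) := \Big(\tau\big(\cos r\,u_0\big)+\tau\big(\sin r\,u_1\big)+g,\ F\big(\cos r\,u_0,\sin r\,u_1\big)\Big)\in\mathcal{H}_g,$$
defined on the unit sphere $S(V_1(g))=\{u_1\in V_1(g):\|u_1\|_{L^2(\partial\si,g)}=1\}$. Since $\tau$ is quadratic and $F$ depends on $u_1$ only through $u_1^2$, the map $\Phi$ is even, so for fixed $(h,\psi)$ the function $u_1\mapsto \langle (h,\psi),\Phi(u_1)\rangle_{\mathcal{H}_g}$ descends to the connected space $\mathbb{RP}(V_1(g))$; if it never vanishes it must have a constant sign. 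Hence it suffices to show that no $(h,\psi)\in\mathcal{H}_g$ makes this function of one strict sign on $S(V_1(g))$. Suppose, for contradiction, that some $(h,\psi)$ does; after replacing $(h,\psi)$ by $(-h,-\psi)$ if necessary and using compactness of $S(V_1(g))$, there is $\epsilon>0$ with $\langle (h,\psi),\Phi(u_1)\rangle_{\mathcal{H}_g}\ge\epsilon$ for all $u_1\in S(V_1(g))$.

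The next step is to realize an approximation of this $(h,\psi)$ as a genuine metric variation. First I would approximate $h$ and $\psi$ in $L^2$ by a smooth symmetric $2$-tensor and a smooth boundary function, closely enough that the uniform lower bound persists with $\epsilon/2$ in place of $\epsilon$ for all $u_1$; this is possible because $\Phi(S(V_1(g)))$ is compact in $\mathcal{H}_g$. The obstruction is that in the first-variation formula only the combination $(h,\,h(T,T)|_{\partial\si})$ of a \emph{single} tensor appears, whereas here the boundary datum $\psi$ is prescribed independently. To reconcile them I would modify the smoothed tensor inside a thin collar $\partial\si\times[0,\delta)$ with normal coordinate $s$: adding a term $\chi(s/\delta)\,\rho\,T^\flat\otimes T^\flat$, where $\chi(0)=1$ and $\rho$ is the smooth difference between $\psi$ and the $(T,T)$-component along $\partial\si$ of the smoothed tensor, produces a smooth tensor $\tilde h$ with $\tilde h(T,T)|_{\partial\si}=\psi$ whose $L^2(S^2(\si),g)$-distance to the smoothed $h$ is $O(\sqrt{\delta})$. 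Taking $\delta$ small keeps $\langle (\tilde h,\tilde h(T,T)),\Phi(u_1)\rangle\ge\epsilon/4>0$ for every $u_1\in S(V_1(g))$.

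Finally I would contradict maximality through the one-sided derivative of $\Theta_r$ along $g(t)=g+t\tilde h$, which lies in $\mathcal{M}(\si)$ for small $t$ since $\lambda_0^D>2$ is an open condition. As $t\mapsto g(t)$ is analytic and $2$ stays off the Dirichlet spectrum, the Dirichlet-to-Neumann family $\mathcal{D}_2(g(t))$ is an analytic family of self-adjoint operators, so by Rellich--Kato theory the eigenvalues issuing from $\sigma_1(g)$ organize into finitely many analytic branches $\sigma_1^{(i)}(t)$ with $\sigma_1(g(t))=\min_i\sigma_1^{(i)}(t)$ near $t=0$ (the cluster at $\sigma_1(g)$ being isolated, and the simple branch from $\sigma_0(g)<\sigma_1(g)$ staying below). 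Because $\sin^2 r\,|\partial\si|_g>0$, this yields $\Theta_r(g(t))=\min_i\Theta_r^{(i)}(t)$ with each $\Theta_r^{(i)}$ smooth and, by \eqref{deriv_sigma} and the formula $\Theta_r'(0)=Q_h(u_1)$ computed above, $(\Theta_r^{(i)})'(0)=Q_{\tilde h}(w_i)$ for an $L^2(\partial\si)$-orthonormal eigenbasis $\{w_i\}\subset S(V_1(g))$; hence the right derivative equals $\min_{u_1\in S(V_1(g))}Q_{\tilde h}(u_1)$. Since $Q_{\tilde h}(u_1)=\langle(\tilde h,\tilde h(T,T)),\Phi(u_1)\rangle>0$ for all $u_1$, this right derivative is strictly positive, contradicting the maximality inequality $(\Theta_r)'_+(0)\le0$. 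The main difficulties are exactly these two points: the collar decoupling of the boundary datum from the interior tensor, and the justification via analytic perturbation theory that the right derivative of $\Theta_r$ is the minimum over the eigenspace of the branch derivatives $Q_{\tilde h}$; the degenerate case $\dim V_1(g)=1$ is covered by the same sign argument, with $S(V_1(g))$ consisting of two antipodal points on which $\Phi$ takes equal values.
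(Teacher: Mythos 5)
Your proposal is correct, and it shares the paper's skeleton --- smooth $L^2$-approximation of $(h,\psi)$, a collar modification of the tensor so that its $(T,T)$-trace on $\partial\si$ matches the boundary datum (the paper does exactly this, ``redefining $\tilde h_k$ in a neighborhood of the boundary''), the affine variation $g+t\tilde h$ inside $\mathcal{M}(\si)$, the Hadamard formula \eqref{deriv_sigma}, and maximality --- but it takes a genuinely different route at the crucial differentiability step, and it restructures the endgame. The paper never computes a one-sided derivative of $\Theta_r$ at $t=0$: it uses the Lipschitz continuity of $\sigma_1(g(t))$ (Lemma \ref{lem:lips}, proved precisely for this purpose) together with the fundamental theorem of calculus on $[-\varepsilon,0]$ and $[0,\varepsilon]$ to find sequences $t_j\to 0^{\mp}$ at which $\Theta_r'(t_j)$ exists with a favorable sign, then extracts $C^2$-limits of normalized eigenfunctions to produce $u^{\pm}\in V_1(g)$ with $Q_{h_k}(u^{-})\ge 0\ge Q_{h_k}(u^{+})$; the final interpolation to an exact zero is left implicit, and your opening reduction via evenness of $u_1\mapsto\Phi(u_1)$ and connectedness of $\mathbb{RP}(V_1(g))$ is exactly that implicit intermediate-value argument, made explicit and moved to the front. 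You instead invoke Kato--Rellich theory to identify $(\Theta_r)'_+(0)$ with $\min_{S(V_1(g))}Q_{\tilde h}$, which gives a cleaner, direct contradiction; your observation that the branch derivatives are the eigenvalues of the quadratic form $Q_{\tilde h}$ restricted to $V_1(g)$, so the minimum over branches equals the minimum over the sphere, is right. The price is the one assertion you do not prove: that $\{\mathcal{D}_2(g(t))\}$ is an analytic family of self-adjoint operators. These operators act on the varying spaces $L^2(\partial\si,g(t))$, so one must first conjugate to a fixed Hilbert space (say by the unitary multiplication coming from the ratio of boundary length densities) and then verify analyticity in the form sense, checking along the way that $2$ stays off the Dirichlet spectrum of $g(t)$ (true, as you note, since $\lambda_0^D>2$ is open along a smooth path). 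This is standard but not off-the-shelf for the frequency-$2$ Dirichlet-to-Neumann operator, and it is exactly the machinery the paper's Lipschitz lemma was engineered to avoid: the paper only ever needs sign information at nearby parameters where the derivative happens to exist, which is technically lighter, while your route buys an exact formula for the one-sided derivative at the cost of justifying the analytic-family claim.
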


\begin{proof}
Consider $(h,\psi) \in \mathcal{H}_{g}$. Since $C^\infty\big(S^2(\si)\big) \times C^\infty(\partial\si)$ is dense in $\mathcal{H}_{g}$, for each $k\in\mathbb{N}$ there exists a smooth pair $(\tilde{h}_k,\tilde{\psi}_k)$ such that $(\tilde{h}_k,\tilde{\psi}_k) \rightarrow (h,\psi)$ in $L^2$. Moreover, for each $k$, we may redefine $\tilde{h}_k$ in a neighborhood of the boundary to a smooth tensor field $h_{k}$ with $h_k(T,T)\vert_{\p \si} = \psi$, and such that the change in the $L^2$ norm is arbitrarily small. Thus, we obtain a smooth sequence $h_k$ such that $\big(h_k,h_k(T,T)\big) \rightarrow (h,\psi)$ in $L^2$.

Let $g(t)= g + th_k$. 
Then $g_{0}=g$ and $\displaystyle\frac{dg}{dt}\Big|_{t=0} =h_k$.
Since $g$ maximizes $\Theta_{r}$, given any $\varepsilon >0$, we have 
\[\int_{-\varepsilon}^0 \Theta_{r}^{\prime}(t)\,dt = \Theta_{r}(0) - \Theta_{r}(-\epsilon) \geq 0,
\]
Therefore, there is $t \in (-\varepsilon,0)$ such that 
$\Theta_{r}^{\prime}(t)$ exists and is nonnegative. Let $t_j$ be such a sequence of points with $t_j<0$ and $t_j \rightarrow 0$ such that $\Theta_{r}^{\prime}(t_j) \geq 0$. Choose $u_j \in V_{1}\big(g(t_j)\big)$ with 
$||u_j||_{L^2(\p \si)}=1$. We can use standard elliptic boundary value estimates to obtain bounds on $u_j$ and its derivatives up to the boundary. Thus, after passing to a subsequence, $u_j$ converges in $C^2(\si)$ to an eigenfunction $u_{k}^{-} \in V_{1}(g)$ with $||u_{k}^{-}||_{L^2(\pa\si,g)}=1$ and, consequently, it follows that $Q_{h_k}(u_{k}^{-}) \geq 0$. By a similar argument, taking a limit from the right, there exists $u_{k}^{+} \in V_{1}(g)$ with $||u_{k}^{+}||_{L^2(\pa\si,g)}=1$ such that $Q_{h_k}(u_{k}^{+}) \leq 0$.

As above, after passing to subsequences, $u_{k}^{+} \rightarrow u_{+}$ and $u_{k}^{-} \rightarrow u_{-}$ in $C^2(\si)$, and
\begin{align*}
& \left\langle (h,\psi), \Big(\tau\big(\cos r \, u_0\big)  + \tau\big(\sin r\, u^{+}\big) + \, g,F\big(\cos r\, u_{0},\sin r\, u^{+}\big)\Big) \right\rangle_{\mathcal{H}_{g}} =& \lim_{i \rightarrow \infty} Q_{h_k}(u_{k}^{+})\leq 0 \\
& \left\langle (h,\psi), \Big(\tau\big(\cos r\, u_0\big)  + \tau\big(\sin r\, u^{-}\big) +  \, g,F\big(\cos r\, u_{0},\sin r\, u^{-}\big)\Big) \right\rangle_{\mathcal{H}_{g}} =& \lim_{k \rightarrow \infty} Q_{h_k}(u_{k}^{-})\geq 0.
\end{align*}
Therefore, the lemma follows.
\end{proof}

\begin{thm2}\label{prop:char.min} 
Let $\si$ be a compact surface with boundary.
\begin{itemize}
\item[(i)] Suppose $g \in \mathcal{M}(\si)$ satisfies $\Theta_{r}(\si,g) = \Theta_{r}^{\ast}(\si)$. Then, there exist independent eigenfunctions $v_0 \in V_{0}(g)$ and $v_1,\ldots,v_n \in V_{1}(g)$ which induce a free boundary  minimal isometric immersion $v = (v_0,v_1,\ldots,v_n):(\si,g) \to \ba$.
\item[(ii)] Suppose $g \in \mathcal{M}(\si)\cap \mathfrak{C}$ satisfies $\Theta_{r}(\si,g) = \Theta_{r}(\si,\mathfrak{C})$. Then, there exist independent eigenfunctions $v_0 \in V_{0}(g)$ and $v_1,\ldots,v_n \in V_{1}(g)$ which induce a free boundary harmonic map $v = (v_0,v_1,\ldots,v_n):(\si,g) \to \ba$.
\end{itemize}
\end{thm2}

\begin{proof}
(i) Define $\mathcal{K}$ as the convex hull in $\mathcal{H}_{g}$ of the following subset
$$\left\{\Big(\tau\big(\cos r \, u_0\big)  + \tau\big(\sin r\, u_1\big) +  \, g,F\big(\cos r \, u_{0},\sin r \, u_{1}\big)\Big);\, u_{1} \in V_1(g),  ||u_{1}||_{L^2(\pa\si,g)}=1\right\}.$$

We claim that $(0,0) \in \mathcal{K}$. Suppose this is not true. Since $\mathcal{K}$ is a closed convex subset of $\mathcal{H}_{g}$, by the geometric form of the Hahn-Banach Theorem, there exists $(h,\psi) \in \mathcal{H}_{g}$ such that for any $u_{1} \in V_1(g)$ with $||u_{1}||_{L^2(\pa\si,g)}=1,$ we have
\begin{align*}
\bigg\langle\left(h,\psi\right),\Big(\tau\big(\cos r\, u_0\big)  + \tau\big(\sin r \, u_1\big) +  \,g,F\big(\cos r\, u_{0}, \sin r\, u_{1}\big)\Big)\bigg\rangle_{\mathcal{H}_{g}} < 0.
\end{align*}
However, this last inequality contradicts the previous lemma. 

Thus, there exist $u_1,\ldots,u_n \in V_{1}(g)$ with $||u_{i}||_{L^2(\pa\si,g)}=1$, and $t_1,\ldots,t_n\in \rr_{+}$ with $\displaystyle\sum_{j=1}^n t_j=1$ such that
\begin{align}
0 &= \sum_{j=1}^n t_j\Big[\tau\big(\cos r\, u_0\big)  + \tau\big(\sin r\, u_j\big)+  \,g\Big], \  \textrm{in}\ \si,
\label{eq:extremal1}\\
0 &=\sum_{j=1}^n t_j F\big(\cos r\, u_{0}, \sin r\, u_{j}\big),\ \textrm{on}\ \partial\si.\label{eq:extremal2}
\end{align}
We can rewrite (\ref{eq:extremal1}) as
$$
\tau\big(\cos r\, u_0\big) +  \,g+ \sum_{j=1}^n\tau\big(\sqrt{t_j}(\sin r)u_j\big) = 0.
$$

Denote $v_0 = \vert\partial\si\vert^{\frac{1}{2}} (\cos r)u_0$ and $v_j = \left(t_j\vert\partial\si\vert\right)^{\frac{1}{2}} (\sin r)u_{j},\, j=1,\ldots,n$. Hence, by the definition of $\tau$, we get
\begin{equation}
\label{eq:12}
\sum_{j=0}^{n} \left[dv_j\otimes dv_j - \frac{1}{2}\Big(\big|\nabla^{g} v_j\big|^{2} -  2\,v_{j}^{2}\Big)g - g\right] = 0.
\end{equation}
Since $\mathrm{tr}_{g}\,g=2$ and $\mathrm{tr}_{g}(dv_j\otimes dv_j)=\big|\nabla^{g} v_j\big|^{2},$ taking the trace in the above equality yields
\begin{equation}
\sum_{j=0}^n v_{j}^2 = 1.
\label{eq:sumv}
\end{equation}
In particular, using this fact back into \eqref{eq:12}, we conclude that
\begin{equation}
\sum_{j=0}^{n} dv_j\otimes dv_j = \frac{1}{2}\bigg(\sum_{j=0}^{n}\big|\nabla^{g} v_j\big|^{2}\bigg)g.
\label{eq:imers}
\end{equation}


Using that $\Delta_{g}\,v_j +  2 v_j = 0$ together with \eqref{eq:sumv}, we obtain
\begin{align*}
0&=\Delta_g \bigg(\sum_{j=0}^n v_{j}^2\bigg) = \sum_{j=0}^{n}\bigg(2v_j\Delta_g v_j+ 2\vert \nabla^g v_j\vert^2\bigg) =-4\sum_{j=0}^n v_j^2+ 2\sum_{j=0}^n\vert \nabla^g v_j\vert^2\\
&\implies \sum_{j=0}^n\vert \nabla^g v_j\vert^2=2.
\end{align*}

Therefore, by \eqref{eq:imers}, we conclude that
\begin{equation}
\sum_{j=0}^{n} dv_j\otimes dv_j =g,
\label{eq:iso}
\end{equation}
and $v = (v_0,v_1,\ldots,v_n)$ is an isometric minimal immersion of $\si$ into $\mathbb{S}^{n}$.

Now, we will prove that $v(\si) \subset \ba$.
Observe that by \eqref{eq:extremal2} along $\partial\si$ we have
\begin{align*}
0 &= \sum_{j=1}^n t_j F\left(\vert\partial\si\vert^{-\frac{1}{2}}v_0,\left(t_j\vert\partial\si\vert\right)^{-\frac{1}{2}}v_j\right)\\
&=\frac{\sigma_{0}}{2}(\cos^{2} r -v_{0}^{2})+\frac{\sigma_1}{2}\bigg(\sin^2 r-\sum_{j=1}^n v_{j}^{2}\bigg)\\
&=\frac12\sigma_{0}\cos^{2} r+\frac12\sigma_{1}\sin^{2}r-\frac12\bigg[\sum_{j=0}^{n}v_j\frac{\partial v_j}{\partial\nu}\bigg]\\
&=\frac12\sigma_{0}\cos^{2} r+\frac12\sigma_{1}\sin^{2}r-\frac14\frac{\partial}{\partial\nu}\bigg(\sum_{j=0}^{n}v_{j}^{2}\bigg)\\
&=\frac12\sigma_{0}\cos^{2} r+\frac12\sigma_{1}\sin^{2}r.
\end{align*}
Hence,
\begin{equation}\label{eq:bound.cond}
\sigma_{1}=-(\cot^2r)\sigma_0.
\end{equation}

By \eqref{var-sigma0}, we have $\sigma_0<0$. On the other hand,
\begin{align*}
0&=\frac12\frac{\partial}{\partial\nu}\bigg(\sum_{j=0}^{n}v_j^2\bigg)=\sigma_0v_0^2+\sigma_1\bigg(\sum_{j=1}^nv_j^2\bigg)
=\sigma_0v_0^2+\sigma_1(1-v_0^2),
\end{align*}
from where it follows that
\begin{equation}
\sigma_1=(\sigma_1-\sigma_0)v_0^2 \ \mbox{on} \ \partial\si.
\label{eq:v0}
\end{equation}
Thus, combining \eqref{eq:bound.cond} and \eqref{eq:v0}, and using the fact that $v_0$ is positive, we conclude
\begin{equation*}
v_{0} = \cos r \ \textrm{on}\ \partial\si.
\end{equation*}

Since $v_0$ is positive and $\Delta_{g}v_{0} +  2 v_{0} = 0$, its minimum is attained on the boundary; in particular,
\begin{equation*}
v_{0} \geq \cos r \ \textrm{in}\ \si.
\end{equation*}
Therefore, $v(\si) \subset \ba$.  

It remains to check the free boundary condition. Applying the tensor fields in \eqref{eq:iso} to $(\nu,\nu)$, we obtain that on $\partial \si$ it holds 
\begin{align*}
1= \sum_{j=0}^n\left(\frac{\partial v_j}{\partial \nu}\right)^2= \sigma_0^2v_0^2+\sigma_1^2\bigg(\sum_{j=1}^nv_j^2\bigg) =  \sigma_0^2\cos^2r+\sigma_0^2\cot^4r(1-\cos^2r),
\end{align*}
from where it follows that $\sigma_0=-\tan r$ (since $\sigma_0<0$) and, consequently, $\sigma_1=\cot r$. Therefore, by Proposition \ref{prop-eigen}, the immersion is free boundary.

\vspace{0.3cm}

(ii) Fix a conformal class $\mathfrak{C} = [g]$. Consider a smooth path $t \mapsto f(t)$ of positive functions such that $f(0) \equiv 1$ and $g(t) = f(t)g \in \mathcal{M}(\si)$ for all $t$. Denote $w = f^{\prime}(0)$ so that $\displaystyle\frac{dg}{dt}\Big\vert_{t=0} = wg$. Following the previous notation and using the previous calculations, we have
\begin{align*}
\big(\sigma_{k}\big)'(0) &= \int_{\si} \left\langle \frac{1}{2}\Big(\big|\nabla^{g} u_{k}\big|^{2}_{g}  -  2 u_{k}^{2}\Big)g - du_k\otimes du_k,wg\right\rangle dA_{g} - \frac{1}{2}\,\sigma_{1}(g) \int_{\partial\si} u_{k}^{2}\,w\,g(T,T)\,dL_{g},\\
&= -  2 \int_{\si}u_{k}^{2}\,w\, dA_{g} - \frac{1}{2}\,\sigma_{k}(g) \int_{\partial\si} u_{k}^{2}\,w\,dL_{g},
\end{align*}
hence,
\begin{align*}
\Theta_{r}'(0) = -2|\pa\si|_{g}\int_{\si} \big[(\cos^2 r)u_{0}^{2}+ (\sin^2 r)u_{1}^{2}\big]w\, dA_{g}+ \int_{\partial\si} F\big((\cos r)u_0,(\sin r)u_1\big)w\,dL_{g}.
\end{align*}

Consider $w \in L^{2}(\si,g)$ and $\psi \in L^{2}(\pa\si,g)$. Arguing as in the proof of Lemma \ref{lem:HB} one can prove that there exists $u_1 \in V_1(g)$ such that $\vert\vert u_1\vert\vert_{L^{2}(\partial\si,g)} = 1$ and 
$$\Big\langle(w,\psi),\Big(-2|\pa\si|_{g}(\cos^2 r)u_{0}^{2} - 2|\pa\si|_{g}(\sin^2 r)u_{1}^{2},F\big((\cos r)u_{0},(\sin r)u_{1}\big)\Big)\Big\rangle_{L^2} = 0.$$

Now, define $\mathcal{K}$ as the convex hull in $L^{2}(\si,g)\times L^{2}(\pa\si,g)$ of the following subset
$$\left\{\Big(-2|\pa\si|_{g}(\cos^2 r)u_{0}^{2} - 2|\pa\si|_{g}(\sin^2 r)u_{1}^{2},F\big((\cos r)u_{0},(\sin r)u_{1}\big)\Big);\, u_{1} \in V_1(g)\right\}.$$
Then, arguing as in the proof of (i), we conclude $(0,0) \in \mathcal{K}$. 

Thus, there exist $u_1,\ldots,u_n \in V_{1}(g)$, and $t_1,\ldots,t_n\in \rr$, such that $\sum_{j=1}^n t_j=1$ and
\begin{align}
0 &= \sum_{j=1}^n t_j\big[(\cos^2 r)u_{0}^{2} +(\sin^2 r)u_{j}^{2}\big], \  \textrm{in}\ \si,
\label{eq:extremal3}\\
0 &=\sum_{j=1}^n t_j F\big((\cos r)u_{0},(\sin r)u_{j}\big),\ \textrm{on}\ \partial\si.\label{eq:extremal4}
\end{align}

Denote $v_0 = (\cos r)u_0$ and $v_j = \sqrt{t_{j}}\,(\sin r)u_{j},\, j=1,\ldots,n$. Then, by \eqref{eq:extremal3}, we have
$$\sum_{j=0}^{n}v_{j}^{2} = 1,$$
and, hence, $v(\si) \subset \mathbb{S}^n$. Also, $v: (\si,\mathfrak{C}) \to \mathbb{S}^n$ is a harmonic map, since $\Delta_{g}\,v_j + 2v_j =0$, $j=0,1,\ldots,n$. Finally, using \eqref{eq:extremal4} and arguing as in the proof of (i), we conclude that $v(\si) \subset \ba$, $v(\pa\si) \subset \pa\ba$, and the map satisfies the free boundary condition.
\end{proof}

\section{A Family of free boundary minimal annuli in the spherical cap}\label{sec:rot.annuli}


In \cite{dCD} do Carmo and Dajczer described the parametrization of the family of rotational minimal hypersurfaces in the round sphere $\mathbb S^n$. For the case $n=3$, the parametrization is given by $\Phi_{a}:\rr\times \mathbb S^1\to \mathbb S^3$,
\begin{equation}
\begin{array}{rcl}\label{eq-annulus}
\Phi_{a}(s,\theta)&=&\left(\sqrt{\frac{1}{2}-a\cos(2s)}\cos\varphi(s), \sqrt{\frac{1}{2}-a\cos(2s)}\sin\varphi(s),  \right.\\
&&
 \left.\sqrt{\frac{1}{2}+a\cos(2s)}\cos\theta, \sqrt{\frac{1}{2}+a\cos(2s)}\sin\theta\right),
\end{array}
\end{equation}

where $-\frac{1}{2}<a\leq 0$ is a constant and $\varphi(s)$ is given by
$$
\varphi(s)=\sqrt{\frac{1}{4}-a^2}\displaystyle\int_0^s \frac{1}{(\frac{1}{2}-a\cos(2t))\sqrt{\frac{1}{2}+a\cos(2t)}}dt.
$$

See \cite{Ot} for a different parametrization.

As observed in \cite{LX} (see also \cite{RdO} for more details), it is possible to prove that for any $0<r\leq\frac{\pi}{2}$, there exist $-\frac12<a\leq0$ and $s_0\in\mathbb R$ such that $\Phi_a: [-s_0,s_0]\times\mathbb S^1\to  \mathbb{B}^3(r)$ is a free boundary minimal annulus in  $\mathbb{B}^3(r)$.


To simplify the notation, let us denote $\rho(s)=\sqrt{\frac{1}{2}-a\cos(2s)}$ and omit the subscrit  $a$ in the parametrization. Hence, we have
\begin{align*}
\Phi(s,\theta)&=(\rho(s)\cos\varphi(s),  \rho(s)\sin\varphi(s), \sqrt{1-\rho(s)^2}\cos\theta, \sqrt{1-\rho(s)^2}\sin\theta),
\nonumber\\
\Phi_s(s,\theta)&=\displaystyle(\rho^{\prime}(s)\cos\varphi(s)-\rho(s)\sin\varphi(s)\varphi'(s),
 \rho^{\prime}(s)\sin\varphi(s)+\rho(s)\cos\varphi(s)\varphi'(s),\nonumber\\
& \quad\,   -\frac{\rho(s)\rho^{\prime}(s)}{\sqrt{1-\rho(s)^2}}\cos\theta, -\frac{\rho(s)\rho^{\prime}(s)}{\sqrt{1-\rho(s)^2}}\sin\theta ),\\
\Phi_\theta(s,\theta)&=(0,0, -\sqrt{1-\rho(s)^2}\sin\theta, \sqrt{1-\rho(s)^2}\cos\theta).
\end{align*}
Thus, using that $\varphi'(s)=\displaystyle\frac{\sqrt{1/4-a^2}}{\rho(s)^2\sqrt{1-\rho(s)^2}}$ and $\rho(s)^2\rho^{\prime}(s)^2+1/4-a^2=\rho(s)^2(1-\rho(s)^2),$ we get
\begin{align*}
g_{11}&=\langle \Phi_s, \Phi_s\rangle=\displaystyle\frac{\rho^{\prime}(s)^2+\rho(s)^2(1-\rho(s)^2)\varphi'(s)^2}{1-\rho(s)^2}=1\\
 g_{12}&=\langle \Phi_s, \Phi_\theta \rangle=0 \, \mbox{ and } \,  g_{22}=\langle \Phi_\theta, \Phi_\theta\rangle= 1-\rho(s)^2.
\end{align*}

Then,
\begin{align*}
\Delta_g f&=\displaystyle\frac{1}{\sqrt{\det(g)}}\sum_{i=1}^{2}\partial_i(g^{ij}\sqrt{\det(g)}\partial_jf)\\
&=\displaystyle \partial^2_{ss}f+\frac{1}{1-\rho^2} \partial^2_{\theta\theta}f-\frac{\rho\rho^{\prime}}{1-\rho^2}\partial_sf.
\end{align*}
Hence, the equation $\Delta_gf+2f=0$ is equivalent to 
\begin{equation}\label{eq:min-edo}
\displaystyle \partial^2_{ss}f+\frac{1}{1-\rho^2} \partial^2_{\theta\theta}f-\frac{\rho\rho^{\prime}}{1-\rho^2}\partial_sf+2f=0.
\end{equation}


\begin{theorem}
Let $\Phi_{a}:[-s_0, s_0]\times\mathbb S^1\to \mathbb{B}^3(r)$ be a free boundary minimal annulus in $\mathbb{B}^3(r)$ given by $(\ref{eq-annulus})$, and denote $g = \Phi_{a}^{\ast}\,g_{\mathbb{B}^3(r)}$. Then, $\phi_0(s,\theta)=\rho(s)\cos\varphi(s)$ is a $\sigma_0(g)$-eigenfunction, while $\phi_1(s,\theta)=\rho(s)\sin\varphi(s),$ $\phi_2(s,\theta)=\sqrt{1-\rho(s)^2}\cos\theta$ and $\phi_3(s, \theta)= \sqrt{1-\rho(s)^2}\sin\theta$ are $\sigma_1(g)$-eigenfunctions. Moreover, $\Phi_a$ is an embedding.
\end{theorem}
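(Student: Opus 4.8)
The plan is to treat the three assertions separately, with the identification $\sigma_1(g)=\cot r$ being the heart of the matter. For the first assertion, note that $\phi_0=x_0\circ\Phi_a$ is a coordinate function of a free boundary minimal immersion, so by Proposition \ref{prop-eigen} it solves $\Delta_g\phi_0+2\phi_0=0$ with $\partial_\nu\phi_0+(\tan r)\phi_0=0$ on $\partial\si$; since $\Phi_a(\si)\subset\mathbb{B}^{3}(r)$ forces $\phi_0\geq\cos r>0$, Remark \ref{rem-phi0} gives at once that $\phi_0$ is the positive $\sigma_0(g)$-eigenfunction, with $\sigma_0(g)=-\tan r$. Likewise $\phi_1,\phi_2,\phi_3$ satisfy $\partial_\nu\phi_i-(\cot r)\phi_i=0$, so each is a Steklov eigenfunction at frequency $2$ with eigenvalue $\cot r$; it remains only to show that this eigenvalue is exactly $\sigma_1(g)$.

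For that I would mimic the proof of Theorem \ref{thm-disk}, replacing the disk Laplacian by \eqref{eq:min-edo}. Writing an eigenfunction as a Fourier series $\sum_k\big[a_k(s)\cos k\theta+b_k(s)\sin k\theta\big]$ in $\theta$, the rotational symmetry of the annulus shows that each nonzero Fourier mode is itself an eigenfunction with the same eigenvalue, and separation of variables turns $\Delta_g f+2f=0$ into the ODE $A''-\tfrac{\rho\rho'}{1-\rho^2}A'+\big(2-\tfrac{k^2}{1-\rho^2}\big)A=0$. A mode $a_k(s)\cos k\theta$ with $k\geq2$ already has at least $2k\geq4$ nodal domains, so by Theorem \ref{thm-HS} it cannot be a $\sigma_1$-eigenfunction; hence every $\sigma_1$-eigenfunction involves only $k=0$ and $k=1$. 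Since $\rho$ is even and $\varphi$ is odd in $s$, for each such $k$ the two-dimensional solution space of the ODE splits into even and odd solutions, and the reflection $s\mapsto-s$ diagonalizes the two-dimensional Dirichlet-to-Neumann operator on that mode. For $k=0$ the even solution is $\rho\cos\varphi=\phi_0$ (eigenvalue $-\tan r$) and the odd solution is $\rho\sin\varphi=\phi_1$ (eigenvalue $\cot r$); for $k=1$ the even solution $\sqrt{1-\rho^2}$ (the $s$-profile of $\phi_2,\phi_3$) has eigenvalue $\cot r$, leaving a single remaining odd solution, of some eigenvalue $\mu$.

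This reduces everything to excluding the possibility $\sigma_1(g)=\mu<\cot r$. But the odd $k=1$ solution $A$ vanishes, and changes sign, at $s=0$, so $A(s)\cos\theta$ has nodal set containing $\{s=0\}\cup\{\theta=\pm\tfrac\pi2\}$ and therefore at least four nodal domains, contradicting $N_1\leq2$ in Theorem \ref{thm-HS}. As $\cot r>-\tan r=\sigma_0(g)$ is already an eigenvalue we have $\sigma_1(g)\leq\cot r$, and the exclusion of $\mu$ forces $\sigma_1(g)=\cot r$; in particular $\phi_1,\phi_2,\phi_3$ are $\sigma_1(g)$-eigenfunctions. I expect this dichotomy between the even mode, which realizes $\cot r$, and the odd mode, which is pushed above $\sigma_1$ by its forced sign change, to be the main point of the argument.

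Finally, for embeddedness I would establish injectivity of $\Phi_a$ directly. Matching the last two coordinates of $\Phi_a(s_1,\theta_1)=\Phi_a(s_2,\theta_2)$ gives $\sqrt{1-\rho(s_1)^2}=\sqrt{1-\rho(s_2)^2}$ and $\theta_1=\theta_2$ (here $\rho\in(0,1)$ throughout, since $-\tfrac12<a\leq0$, so $\sqrt{1-\rho^2}>0$), and then the first two coordinates give $\varphi(s_1)\equiv\varphi(s_2)\pmod{2\pi}$. Since $\rho$ is even and strictly monotone on $[0,s_0]$, $\rho(s_1)=\rho(s_2)$ leaves only $s_2=\pm s_1$, and in the case $s_2=-s_1$ the oddness of $\varphi$ forces $\varphi(s_1)\equiv0\pmod\pi$. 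As $\varphi'(s)=\sqrt{1/4-a^2}\big/\big(\rho^2\sqrt{1-\rho^2}\big)>0$, the function $\varphi$ is strictly increasing with $\varphi(0)=0$, so this can only occur at $s_1=0$ provided the total turning satisfies $\varphi(s_0)<\pi$. The boundary identity $\rho(s_0)\cos\varphi(s_0)=\cos r>0$, coming from $\phi_0\vert_{\partial\si}=\cos r$, indicates that in the free boundary construction $\varphi(s_0)$ stays within the first quarter-turn; turning this into the rigorous bound $\varphi(s_0)<\pi$ from the explicit profile, as in \cite{LX,RdO}, is the one genuinely computational step, after which injectivity, and hence embeddedness, follows.
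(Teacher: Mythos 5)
Your proof is correct and follows the paper's skeleton --- separation of variables in $\theta$, with the modes $k\geq 2$ killed by the nodal bound $N_1\leq 2$ of Theorem \ref{thm-HS} --- but it handles the decisive step differently. Where you use parity ($\rho$ even, $\varphi$ odd, hence even/odd coefficients in the ODE) to diagonalize the Dirichlet-to-Neumann operator on each of the modes $k=0,1$, and then exclude the odd $k=1$ mode by counting at least four nodal domains of $A(s)\cos\theta$, the paper instead constructs the second $k=1$ solution explicitly by reduction of order, namely $\sqrt{1-\rho^2}\,h(s)$ with $h(s)=\int_0^s\big(\tfrac12+a\cos 2t\big)^{-3/2}dt$, and computes its Steklov eigenvalue to be $\cot r+\mu$ with $\mu=\big[(1-\rho^2)^{3/2}\int_0^{s_0}(1-\rho^2)^{-3/2}dt\big]^{-1}>0$; similarly, for $k=0$ the paper argues via $L^2(\partial\Sigma)$-orthogonality to $\phi_0$ rather than parity. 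Your route buys something real: you never need the explicit second solution, only its oddness, and your nodal count rules out the odd mode as a $\sigma_1$-eigenfunction irrespective of where its eigenvalue sits, whereas the paper's computation additionally locates it above $\cot r$. Two small points to make explicit: the odd solution has isolated zeros (it solves a second-order linear ODE, so $A(0)=A'(0)=0$ would force $A\equiv 0$), which is what guarantees each of your four regions actually contains a nodal domain; and its trace does not vanish at $s=\pm s_0$, since otherwise $A(s)\cos\theta$ would be a Dirichlet eigenfunction for the eigenvalue $2$, contradicting Proposition \ref{prop-dirichlet}, so the odd boundary eigenvalue $\mu$ is well defined.

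On embeddedness your coordinate-matching argument is sound and again differs from the paper, which applies the Hopf lemma (on sub-annuli $[-s,s]\times\mathbb{S}^1$) to get strict monotonicity of $\phi_0$ in $s$ on each half-annulus. However, your deferral of the bound $\varphi(s_0)<\pi$ to an explicit computation in \cite{LX,RdO} is unnecessary, and is the one loose end in your write-up: since $\Phi_a(\Sigma)\subset\mathbb{B}^3(r)$ means $\phi_0=\rho\cos\varphi\geq\cos r>0$ on all of $\Sigma$ (not merely on $\partial\Sigma$) and $\rho>0$, you get $\cos\varphi(s)>0$ for every $s$; since $\varphi$ is continuous with $\varphi(0)=0$, the intermediate value theorem yields $|\varphi(s)|<\pi/2$ for all $s\in[-s_0,s_0]$, which is stronger than the bound you wanted and closes your case $s_2=-s_1$ with no computation at all. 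With that observation inserted, your proof is complete --- and in fact this positivity argument is also what implicitly fills the step the paper leaves unsaid, namely that $\phi_1$ distinguishes the circles $\{s\}\times\mathbb{S}^1$ and $\{-s\}\times\mathbb{S}^1$, on which $\phi_0$, being even in $s$, takes equal values.
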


\begin{proof}
By Proposition \ref{prop-eigen}, we already know that $\phi_i$ are eigenfunctions.
Since $\phi_0$ is positive, then $\phi_0$ is necessarily a $\sigma_0$-eigenfunction with $\sigma_0=-\tan(r)$. Now we would like to prove that $\phi_i$, $i=1,2,3$, are $\sigma_1$-eigenfunctions, that is, $\sigma_1=\cot(r).$ We will use the same arguments as in the proof of the disk case (see Theorem \ref{thm-disk}).

Let $f:[-s_0,s_0]\times\mathbb S^1\to\mathbb R$ be a $\sigma_1$-eigenfunction. We have
$$f(s,\theta)=a_0(s)\cdot 1+\displaystyle\sum_{k=1}^{\infty}[a_k(s)\cos(k\theta)+b_k(s)\sin(k\theta)]$$
and  $a_0(s)$, $a_k(s)\cos(k\theta)$, $b_k(s)\sin(k\theta)$ are $\sigma_1$-eigenfunctions as well. By Theorem \ref{thm-HS}, the functions $a_k$ and $b_k$ are identically zero for $k\geq2.$

Observe that $a_0(s)$ satisfies the following linear ODE of order 2:
$$
\displaystyle a_0{''}(s)-\frac{\rho(s)\rho^{\prime}(s)}{1-\rho^2(s)}a_0'(s )+2a_0(s)=0.
$$
and we can check that $\phi_0(s,\theta)=\rho(s)\cos\varphi(s)$ and $\phi_1(s,\theta)=\rho(s)\sin\varphi(s)$ are solutions to this ODE as well. Thus, $a_0(s)$ is necessarily a linear combination of them. Since $\phi_0$ is a $\sigma_0$-eigenfunction, and $a_0(s)$ and $\phi_1$ are eigenfunctions with eigenvalues different from $\sigma_0$, we know that $a_0(s)$ and $\phi_1$ are both $L^2$-orthogonal to $\phi_0$ on the boundary, from where we can conclude that $a_0(s)$ is a constant multiple of $\phi_1$. In particular, if $a_0(s)$ is not identically zero, we get that $\phi_1$ is a $\sigma_1$-eigenfunction and $\sigma_1=\cot(r)$ and, consequently, $\phi_2$ and $\phi_3$ are $\sigma_1$-eigenfunctions as well.

Now suppose that $a_0\equiv0$. 

Assume $a_1(s)$ is not identically zero. We know by \eqref{eq:min-edo} that $a_1(s)\cos(\theta)$ satisfies:
$$
\displaystyle\left[ a_1{''}(s)-\frac{\rho(s)\rho^{\prime}(s)}{1-\rho^2(s)}a_1'(s )+\left(2-\frac{1}{1-\rho^2(s)}\right)a_1(s)\right]\cos\theta=0.
$$
In particular, $a_1(s)$ satisfies the linear ODE of order 2:
\begin{equation}\label{eq:edo-a1}
\displaystyle\left[ a_1{''}(s)-\frac{\rho(s)\rho^{\prime}(s)}{1-\rho^2(s)}a_1'(s )+\left(2-\frac{1}{1-\rho^2(s)}\right)a_1(s)\right]=0.
\end{equation}

We can check that $\sqrt{1-\rho^2(s)}$ satisfies this ODE, now we need to find a second solution. 
Using that $\rho(s)\rho(s)'=a\sin(2s)$ and $1-2\rho^2(s)=2a\cos(2s),$ we can rewrite \eqref{eq:edo-a1} as
\begin{equation}\label{eq:edo-a11}
\displaystyle\left(\frac{1}{2}+a\cos(2s)\right)a_1''(s)-a\sin(2s)a_1'(s)+2a\cos(2s)a_1(s)=0.
\end{equation}

Since this is a second order linear ODE, we can write a second solution to it as $\sqrt{1-\rho^2(s)}h(s)$, for some function $h(s).$ Applying $\sqrt{1-\rho^2(s)}h(s)$ to \eqref{eq:edo-a11}, we get the function $h(s)$ must satisfy
$$
-3a\sin(2s)h'(s)+\left(\frac{1}{2}+a\cos(2s)\right)h''(s)=0,
$$
which implies, after possibly a rescaling, that
$$
h(s)=\int_0^s\left(\frac{1}{2}+a\cos(2t)\right)^{-\frac{3}{2}}dt +C,
$$
for some constant $C.$ We can suppose $C=0$ since we are just interested in finding a basis for the space of solutions to an ODE.


We have that $\sqrt{1-\rho^2(s)}$ and $\sqrt{1-\rho^2(s)}h(s)$ are two linearly independent solutions to the ODE given by \eqref{eq:edo-a11}, then any solution to this ODE is a linear combination of them.

Note that  the outward conormal $\frac{\partial}{\partial\nu}$ to the boundary of $[-s_0,s_0]\times\mathbb S^1$ satisfies $\frac{\partial}{\partial\nu}=-\partial_s$ on $\{-s_0\}\times\mathbb S^1$ and $\frac{\partial}{\partial\nu}=\partial_s$ on $\{s_0\}\times\mathbb S^1$. Hence, using the fact that $h(-s)=h(s),$ we can check that 
$$
\frac{\frac{\partial h}{\partial \nu}(-s_0)}{h(-s_0)}=\frac{\frac{\partial h}{\partial \nu}(s_0)}{h(s_0)}=:\mu.
$$

Thus, the functions  $\phi_2(s,\theta)=\sqrt{1-\rho^2(s)}\cos\theta$ and $f_1(s,\theta)=\sqrt{1-\rho^2(s)}h(s)\cos\theta$ are  Steklov eigenfunctions with eigenvalues $\cot(r)$ and $\cot(r)+\mu$, respectively.

%
%
%
%
%

Observe that 
$$
\cot(r)=\displaystyle\frac{\displaystyle\frac{\partial \phi_1}{\partial s}}{\phi_1}=\frac{-\rho\rho^{\prime}}{1-\rho^2}
$$
and
\begin{align*}
\cot(r)+\mu&=\displaystyle\frac{\displaystyle\frac{\partial f_1}{\partial s}}{f_1}=\displaystyle\frac{-\rho\rho^{\prime}\displaystyle\int_0^{s_0}\left(1-\rho^2\right)^{-\frac{3}{2}}dt+(1-\rho^2)^{-1/2}}{(1-\rho^2)\displaystyle\int_0^{s_0}\left(1-\rho^2\right)^{-\frac{3}{2}}dt}\\
&=\cot(r)+\displaystyle\frac{1}{(1-\rho^2)^{\frac{3}{2}}\displaystyle\int_0^{s_0}\left(1-\rho^2\right)^{-\frac{3}{2}}dt}.
\end{align*}
Hence, $\mu>0$. Therefore, $\phi_2(s,\theta)=\sqrt{1-\rho^2(s)}\cos\theta$ is a $\sigma_1$-eigenfunction with $\sigma_1=\cot(r)$ and, in particular, $\phi_3(s,\theta)=\sqrt{1-\rho^2(s)}\sin\theta$ and $\phi_1(s,\theta)=\rho(s)\sin\varphi(s)$ are  $\sigma_1$-eigenfunctions as well.

For the case where $a_1(s)$ is identically zero, we apply the same arguments to the function $b_1(s)$ and we get the same conclusions. Therefore, the annulus is indeed immersed by eigenfunctions as in the statement of theorem.

Now to see that $\Phi_a$ is an embedding, observe that since $\Phi_{a}([-s_0, s_0]\times\mathbb S^1)\subset\mathbb{B}^3(r)$, the first coordinate $\phi_0$ satisfies $\phi_0\geq \cos(r)\geq 0$  and, since it is minimal, we know that $\Delta \phi_0+2\phi_0=0$. Hence, we can apply the Hopf Lemma to conclude that $\displaystyle\frac{\partial \phi_0}{\partial s}<0$ for any $s\in(0,s_0]$ (similarly, $\displaystyle\frac{\partial \phi_0}{\partial s}>0$ for any $s\in[-s_0, 0)$). Therefore, $\phi_0$ is injective and it follows that the annulus is necessarily embedded.

\end{proof}

Let us mention that in \cite{CFM}, Cerezo-Fern\'andez-Mira constructed infinitely many free boundary minimal annuli in geodesic balls of $\mathbb S^3$ with a discrete, non-rotational, symmetry group, which are not necessarily embedded. In particular, we can conclude that the family of free boundary minimal annuli in geodesic balls of $\mathbb S^3$ is abundant.

\section{Uniqueness of free boundary minimal annuli in the spherical cap}

In this section we will prove Theorem C stated in the introduction. 

Let $\si^2$ be a compact surface and let $\Phi: (\Sigma,g) \to \mathbb B^{n}(r)$ be a free boundary minimal immersion. We have $\Phi^{*}T\mathbb{S}^{n} = \Phi_{\ast}T\si\oplus N\si$, where $N\si$ is the normal bundle of $\Phi$. Hence, given $V \in \Gamma(\Phi^{\ast}T\mathbb{S}^{n})$, for every $p\in \si$ there exist unique $V^{\top}_{p} \in \Phi_{\ast}T_p\si$ and $V^{\perp}_{p} \in (N\si)_{p}$ such that
$$V = V^{\top} + V^{\perp}.$$

The following connection is induced by $\nabla^{\mathbb{S}^{n}}$:
\begin{eqnarray*}
\nabla &:& \Gamma(\Phi^{\ast}T\mathbb{S}^{n}) \to \Gamma(\Phi^{\ast}T\mathbb{S}^{n}\otimes T^{*}\si),\quad \nabla_{X}V := \nabla^{\mathbb{S}^n}_{\Phi_{\ast}X}V.
\end{eqnarray*}
We say that $V \in \Gamma(\Phi^{\ast}T\mathbb{S}^{n})$ is \emph{conformal} if there exists a function $u\in C^{\infty}(\Sigma)$ such that
\begin{equation*}\label{def:conformal}
\langle\nabla_{X}V,Y\rangle + \langle\nabla_{Y}V,X\rangle = 2u\langle X,Y\rangle,
\end{equation*} 
for any $X,Y \in \Gamma(T\si)$.

For $V,W \in \Gamma(\Phi^{\ast}T\mathbb{S}^{n})$ such that $V_p,W_p \in T_{\Phi(p)}\partial \ba$, $\forall\, p \in \partial\si$, we consider the bilinear form
\begin{equation*}
Q(V,W) = \int_{\si}\big(\langle\nabla V,\nabla W\rangle - 2\langle V,W\rangle + \langle V^{\top},W^{\top}\rangle\big)dA - \cot(r)\int_{\partial\Sigma}\langle V,W\rangle\, dL.
\end{equation*}
Observe that
\begin{equation*}\label{eq:2var.ener}
Q(V,V) = \delta^{2}\mathcal{E}(V),
\end{equation*}
where $\delta^{2}\mathcal{E}$ denotes the second variation of the energy of the harmonic map $\Phi$.

Now assume that $\Sigma$ is an annulus. 
Then, there is $T>0$ such that $(\Sigma,g)$ is conformal to $[-T,T]\times\mathbb S^1$ endowed with the cylindrical metric $g_{\textrm{cyl}}=ds^2+g_{\mathbb{S}^{1}}$, i.e., $g=\lambda g_{\textrm{cyl}}$ for some positive function $\lambda$. Let $\partial_\theta$ be the (globally defined) vector field associated to any angular coordinate. Observe that $\partial_{\theta}$ spans $T_{p}\partial\si$, $\forall\,p \in \partial\si$, and the outward pointing unit $g$-normal of $\partial\si$ is $\pm\lambda^{-1/2}\partial_s$.

\begin{lemma} \label{kernel}
Let $\Phi = (\phi_0,\ldots,\phi_n):(\Sigma,g) \to \mathbb{B}^n(r)$ be a free boundary minimal immersion of an annulus. Then, for any $V \in \Gamma(\Phi^{\ast}T\mathbb{S}^{n})$ such that $V_p \in T_p\pa\ba$, $\forall\,p\in\pa\si$, we have
$$
Q(V,\Phi_{\theta})=0.
$$
\end{lemma}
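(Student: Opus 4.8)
The plan is to recognize $\Phi_\theta$ as (essentially) a Jacobi field and to verify, via a single integration by parts, that it is $Q$-orthogonal to every admissible $V$. Conceptually, $\partial_\theta$ generates the rotations $R_t(s,\theta)=(s,\theta+t)$, which are isometries of $([-T,T]\times\mathbb S^1,g_{\mathrm{cyl}})$ fixing $\partial\Sigma$ setwise; since harmonicity of maps out of a surface is conformally invariant, each $\Phi\circ R_t$ is again a free boundary harmonic map, and so $\Phi_\theta=\frac{d}{dt}\big|_{t=0}\Phi\circ R_t$ ought to lie in the kernel of the second variation with its natural boundary condition. I would make this precise by direct computation in the conformal coordinates $g=\lambda\, g_{\mathrm{cyl}}$.

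First I would simplify the integrand. Since $\Phi_\theta=\Phi_\ast\partial_\theta$ is tangent to $\Sigma$, one has $(\Phi_\theta)^\top=\Phi_\theta$ and $\langle V^\top,(\Phi_\theta)^\top\rangle=\langle V,\Phi_\theta\rangle$, so
$$Q(V,\Phi_\theta)=\int_\Sigma\big(\langle\nabla V,\nabla\Phi_\theta\rangle-\langle V,\Phi_\theta\rangle\big)\,dA-\cot(r)\int_{\partial\Sigma}\langle V,\Phi_\theta\rangle\,dL.$$
Integrating the gradient term by parts against the induced connection on $\Phi^\ast T\mathbb S^n$ gives
$$Q(V,\Phi_\theta)=\int_\Sigma\big\langle V,-\Delta\Phi_\theta-\Phi_\theta\big\rangle\,dA+\int_{\partial\Sigma}\big\langle V,\nabla_\nu\Phi_\theta-\cot(r)\Phi_\theta\big\rangle\,dL,$$
where $\Delta$ denotes the rough Laplacian and $\nu=\pm\lambda^{-1/2}\partial_s$ the outward $g$-unit conormal. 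It then suffices to show the interior integrand and the boundary integrand both vanish.

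For the interior term I would use $\langle\Phi_s,\Phi_s\rangle=\langle\Phi_\theta,\Phi_\theta\rangle=\lambda$, $\langle\Phi_s,\Phi_\theta\rangle=0$ and $\langle\Phi_\theta,\Phi\rangle=0$. The Christoffel contributions of a conformal metric cancel in the trace, so $\Delta\Phi_\theta=\lambda^{-1}(\nabla_s\nabla_s+\nabla_\theta\nabla_\theta)\Phi_\theta$; computing the normal parts gives $\nabla_s\Phi_\theta=\Phi_{s\theta}$ and $\nabla_\theta\Phi_\theta=\Phi_{\theta\theta}+\lambda\Phi$. Differentiating the minimal surface equation $\Phi_{ss}+\Phi_{\theta\theta}=-2\lambda\Phi$ in $\theta$ and substituting then yields $\Delta\Phi_\theta=-\Phi_\theta$, so that $-\Delta\Phi_\theta-\Phi_\theta=0$.

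The boundary term is the crux, and I expect it to be the main technical obstacle. On $\partial\Sigma$ one has $\nabla_\nu\Phi_\theta=\pm\lambda^{-1/2}\Phi_{s\theta}$, and I would differentiate the free boundary relations $\partial_\nu\phi_i=\cot(r)\phi_i$ $(i\geq1)$ and $\partial_\nu\phi_0=-\tan(r)\phi_0$ in the tangential direction $\partial_\theta$. Here the non-commutativity of $\partial_\nu$ and $\partial_\theta$ (the factor $\lambda^{-1/2}$ depends on $\theta$) must be tracked carefully; combined with $\phi_0\equiv\cos r$ on $\partial\Sigma$, so that $(\phi_0)_\theta=0$, these relations collapse to the identity that $\nabla_\nu\Phi_\theta-\cot(r)\Phi_\theta$ is a scalar multiple of $\Phi_s$ along $\partial\Sigma$. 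Finally the free boundary condition gives $\Phi_s=\lambda^{1/2}N_{\partial\mathbb B^n(r)}$, which is normal to $\partial\mathbb B^n(r)$ in $\mathbb S^n$; since $V_p\in T_p\partial\mathbb B^n(r)$ by hypothesis, $\langle V,\Phi_s\rangle=0$ and the boundary integrand vanishes. Both contributions being zero, we conclude $Q(V,\Phi_\theta)=0$.
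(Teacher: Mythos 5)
Your proposal is correct, and its two key pointwise claims check out. In the interior, with $g=\lambda g_{\mathrm{cyl}}$ one computes $\nabla_s\nabla_s\Phi_\theta+\nabla_\theta\nabla_\theta\Phi_\theta=\Phi_{ss\theta}+\Phi_{\theta\theta\theta}+2\lambda_\theta\Phi+\lambda\Phi_\theta$, and substituting the $\theta$-derivative of $\Phi_{ss}+\Phi_{\theta\theta}+2\lambda\Phi=0$ makes the $\lambda_\theta\Phi$ terms cancel against the second-fundamental-form contributions, giving $\Delta\Phi_\theta=-\Phi_\theta$ exactly as you assert. On the boundary, differentiating the Robin conditions tangentially yields $\nabla_\nu\Phi_\theta-\cot(r)\Phi_\theta=\lambda^{-1/2}\partial_\theta\big(\lambda^{1/2}\big)\,\tfrac{1}{\sin r}\big(\cos r\,\Phi-\partial_0\big)$ independently of the sign of $\nu$; the $\phi_0$-component matches because $-\tan(r)\cos(r)=-\sin r$ equals the $0$-component of $N_{\pa\ba}$ along $\pa\si$, so the bracket is indeed proportional to $N_{\pa\ba}$ and hence orthogonal to $V$. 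This is a genuinely different organization from the paper's proof, which works componentwise in $\R^{n+1}$: there one differentiates the scalar equations $\Delta_{\mathrm{cyl}}\phi_i+2\lambda\phi_i=0$ in $\theta$ and pairs with $V$ immediately, so the inconvenient term $2\lambda^{-1}\lambda_\theta\Phi$ is killed by $\langle V,\Phi\rangle=0$ rather than by a covariant cancellation; the integration by parts is done scalar-by-scalar, the same commutator of $\nu_g$ and $\partial_\theta$ is handled at the boundary using $V_0=0$ on $\pa\si$ and $\langle V,\Phi\rangle=0$, and a separate frame computation is needed to convert the flat pairing $\sum_j\langle\nabla^g V_j,\nabla^g W_j\rangle$ into $\langle\nabla V,\nabla W\rangle+\langle V^\top,W^\top\rangle$. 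Your covariant packaging buys a strictly stronger statement -- $\Phi_\theta$ is a pointwise Jacobi field for the energy whose Robin boundary operator takes values in the normal direction of $\pa\ba$, from which $Q(V,\Phi_\theta)=0$ for every admissible $V$ follows from a single integration by parts -- and it dispenses with the flat-to-covariant conversion identity. What the paper's version buys is reusability: its intermediate componentwise formulas (the equation satisfied by $\Phi_\theta$ and the integrated identity with the boundary terms) are precisely what gets quoted again in Case 1 of the proof of Theorem C, whereas from your formulation those scalar identities would have to be re-extracted there.
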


\begin{proof}
Since $\Delta_g =\lambda^{-1}\Delta_{\textrm{cyl}},$ using Proposition \ref{prop-eigen} we have 
\begin{align*}
& \Delta_{\textrm{cyl}}\,\phi_i+2\lambda \phi_i=0 \quad \Rightarrow \quad \partial_\theta\big(\Delta_{\textrm{cyl}}\,\phi_i+2\lambda \phi_i\big)=0\vspace{0.1cm}\\
\displaystyle\Rightarrow\quad & \displaystyle\Delta_{\textrm{cyl}}\frac{\partial \phi_i}{\partial\theta}+2\frac{\partial\lambda}{\partial\theta}\phi_i+2\lambda\frac{\partial \phi_i}{\partial\theta}=0 \quad
\Rightarrow \quad \left\langle V, \Delta_g\frac{\partial\Phi}{\partial\theta}+2\lambda^{-1}\frac{\partial\lambda}{\partial\theta}\Phi+2\frac{\partial \Phi}{\partial\theta}\displaystyle\right\rangle=0.
\end{align*}\
Since $\langle V, \Phi\rangle =0$ we conclude
\begin{equation}\label{eq:int.init}
\int_{\Sigma}\left\langle V, \Delta_g\frac{\partial\Phi}{\partial\theta}\right\rangle dA+2\int_\Sigma\left\langle V, \frac{\partial \Phi}{\partial\theta}\right\rangle dA=0.
\end{equation}

Write $V = \displaystyle\sum_{j=0}^{n}V_{j}\pa_j$.
Applying the divergence theorem to the vector fields $V_{j}\nabla^{g}\frac{\partial\phi_j}{\partial\theta}$ we obtain 
\begin{equation}\label{eq:lapterm}
\int_{\Sigma}\left\langle V, \Delta_g\frac{\partial\Phi}{\partial\theta}\right\rangle dA = - \sum_{j=0}^{n}\int_{\si}\left\langle \nabla^{g}V_j,\nabla^{g}\frac{\partial\phi_j}{\partial\theta}\right\rangle dA + \sum_{j=0}^{n}\int_{\pa\si}V_{j}\left\langle\nu_{g},\nabla^{g}\frac{\partial\phi_j}{\partial\theta}\right\rangle dL.
\end{equation}

Since $\nu_g=\lambda^{-1/2}\nu_{\textrm{cyl}}$ and $\phi_j$ satisfies \eqref{eq:FBMS3}, for $j=1,\ldots,n$, we get
\begin{align*}\left\langle\nu_{g},\nabla^{g}\frac{\partial\phi_j}{\partial\theta}\right\rangle &= \nu_g\left(\frac{\partial\phi_j}{\partial\theta}\right)=\lambda^{-1/2}\nu_{\textrm{cyl}}\left(\frac{\partial\phi_j}{\partial\theta}\right)\\
&=\lambda^{-1/2}\frac{\partial}{\partial\theta}\left(\nu_{\textrm{cyl}}(\phi_j)\right)=\lambda^{-1/2}\frac{\partial}{\partial\theta}\left(\lambda^{1/2}\nu_{g}(\phi_j)\right)\\
&=\cot(r)\frac{\partial\phi_j}{\partial\theta}+\frac{\partial \lambda^{1/2}}{\partial\theta}\lambda^{-1/2}\cot(r)\phi_j, \quad  \textrm{on } \partial\si.
\end{align*}
Also, notice that $\displaystyle\frac{\partial\phi_0}{\partial\theta}\big\vert_{\partial\si}=0$, since $\phi_0$ is constant along $\partial\si$. So, 
$$
\left\langle\nu_{g},\nabla^{g}\frac{\partial\phi_0}{\partial\theta}\right\rangle= -\frac{\partial \lambda^{1/2}}{\partial\theta}\lambda^{-1/2}\tan(r)\phi_0=- \frac{\partial \lambda^{1/2}}{\partial\theta}\lambda^{-1/2}\sin(r), \quad  \textrm{on } \partial\si.
$$

 Hence,
\begin{align}
\nonumber\sum_{j=0}^{n}\int_{\pa\si}V_{j}\left\langle\nu_{g},\nabla^{g}\frac{\partial\phi_j}{\partial\theta}\right\rangle dL&=\sum_{j=0}^{n}\int_{\pa\si}V_{j}\left[\cot(r)\frac{\partial\phi_j}{\partial\theta}+\frac{\partial \lambda^{1/2}}{\partial\theta}\lambda^{-1/2}\cot(r)\phi_j\right] dL \\
\label{eq:bdrterm} &= \cot(r)\int_{\partial\si}\left\langle V, \frac{\partial\Phi}{\partial\theta}\right\rangle dL,
\end{align}
where in the first equality we used the fact that $V_0=0$ on $\partial \si$, and in the last equality we used that $\langle V, \Phi\rangle =0.$

Consider a local orthonormal frame $\{e_1,e_2\}$ on $\si$. For any $V,W \in \Gamma(\Phi^{\ast}T\mathbb{S}^{n})$ we have
\begin{align*}
\big\langle\nabla V,\nabla W\big\rangle &= \sum_{k} \big\langle \nabla^{\mathbb{S}^n}_{\Phi_{\ast}e_k}V,\nabla^{\mathbb{S}^n}_{\Phi_{\ast}e_k}W\big\rangle\\
&= \sum_{k} \Big\langle \nabla^{\R^{n+1}}_{\Phi_{\ast}e_k}V + \langle\Phi_{\ast}e_k,V\rangle \Phi,\nabla^{\R^{n+1}}_{\Phi_{\ast}e_k}W + \langle\Phi_{\ast}e_k,W\rangle \Phi\Big\rangle \\
&= \sum_{k}\Big[\big\langle \nabla^{\mathbb{R}^{n+1}}_{\Phi_{\ast}e_k}V,\nabla^{\mathbb{R}^{n+1}}_{\Phi_{\ast}e_k}W\big\rangle + \langle\Phi_{\ast}e_k,V\rangle \big\langle\Phi,\nabla^{\mathbb{R}^{n+1}}_{\Phi_{\ast}e_k}W\big\rangle + \langle\Phi_{\ast}e_k,W\rangle \big\langle\Phi,\nabla^{\mathbb{R}^{n+1}}_{\Phi_{\ast}e_k}V\big\rangle\Big]\\
&\quad + \sum_{k}\langle\Phi_{\ast}e_k,V\rangle \langle\Phi_{\ast}e_k,W\rangle\\
&= \sum_{k}\Big[\big\langle \nabla^{\mathbb{R}^{n+1}}_{\Phi_{\ast}e_k}V,\nabla^{\mathbb{R}^{n+1}}_{\Phi_{\ast}e_k}W\big\rangle - 2\langle\Phi_{\ast}e_k,V\rangle \big\langle\Phi_{\ast}e_k,W\big\rangle\Big] + \langle V^{\top},W^{\top}\rangle\\
&= \sum_{j,k} e_k(V_j)e_k(W_j) - \langle V^{\top},W^{\top}\rangle\\
&= \sum_{j} \left\langle \nabla^{g}V_j,\nabla^{g}W_j\right\rangle - \langle V^{\top},W^{\top}\rangle,
\end{align*}
where in the fourth equality we used the fact that $\langle V,\Phi\rangle=\langle W,\Phi\rangle=0.$

Hence,
\begin{equation}\label{eq:derterm}
\sum_{j} \left\langle \nabla^{g}V_j,\nabla^{g}W_j\right\rangle = \big\langle\nabla V,\nabla W\big\rangle + \langle V^{\top},W^{\top}\rangle.
\end{equation}

Using \eqref{eq:lapterm}, \eqref{eq:bdrterm} and \eqref{eq:derterm} in \eqref{eq:int.init}, we get
$$\int_{\si}\bigg[\Big\langle\nabla V,\nabla \frac{\partial\Phi}{\partial\theta}\Big\rangle + \Big\langle V^{\top},\Big(\frac{\partial\Phi}{\partial\theta}\Big)^{\top}\Big\rangle - 2\Big\langle V,\frac{\partial\Phi}{\partial\theta}\Big\rangle\bigg]dA - \cot(r)\int_{\partial\si}\Big\langle V, \frac{\partial\Phi}{\partial\theta}\Big\rangle dL = 0.$$
That is,
$$
Q(V,\Phi_{\theta})=0.
$$
\end{proof}

Now let us specialize to the case $n=3$, and suppose $\si$ is orientable. We know the bundle $N\si$ is trivial, and there is a globally defined unit normal vector $N$ in $\si$.
\begin{lemma}
\label{lemma:perp2}
Let $\Phi: (\Sigma^{2},g) \to \mathbb{B}^{3}(r)$ be a free boundary minimal immersion, where $\si$ is a compact orientable surface. If $V \in \Gamma(\Phi^{\ast}T\mathbb{S}^{n})$ is conformal and $V_p \in T_p\pa\ba$, $\forall\,p\in\pa\si$, then,
\begin{equation}\label{eq:QS.conf}
Q(V,V)= S\big(\langle V,N\rangle,\langle V,N\rangle\big).
\end{equation}
\end{lemma}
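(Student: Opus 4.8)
The plan is to reduce the identity to a boundary computation. Write $f=\langle V,N\rangle$ and $W=V^{\top}$, and let $A$ be the shape operator of $\Phi$, so that $\mathrm{I\!I}_{\si}(X,Y)=\langle AX,Y\rangle$. The Gauss--Weingarten relations give, for $X\in\Gamma(T\si)$,
\[
(\nabla_{X}V)^{\top}=\nabla^{\si}_{X}W-fAX,\qquad \langle\nabla_{X}V,N\rangle=\langle AX,W\rangle+X(f),
\]
where $\nabla^{\si}$ is the Levi-Civita connection of $(\si,g)$. Combining these with the conformality of $V$ shows that the latter is equivalent to the structure equation $\mathcal{L}_{W}g=2u\,g+2f\,\mathrm{I\!I}_{\si}$, with $u$ the conformal factor; tracing it and using minimality gives $\operatorname{div}W=2u$. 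First I would expand $Q(V,V)$ using the two relations above and subtract $S(f,f)$. Here minimality enters twice: $\operatorname{tr}A=0$ forces $A^{2}=\tfrac12|\mathrm{I\!I}_{\si}|^{2}\operatorname{Id}$, so $|AW|^{2}=\tfrac12|\mathrm{I\!I}_{\si}|^{2}|W|^{2}$, while the structure equation yields $\sum_{k}\langle\nabla^{\si}_{e_{k}}W,Ae_{k}\rangle=f|\mathrm{I\!I}_{\si}|^{2}$. The resulting $f^{2}|\mathrm{I\!I}_{\si}|^{2}$-contributions then cancel against the $-|\mathrm{I\!I}_{\si}|^{2}f^{2}$ hidden in $S(f,f)$, and after this bookkeeping the interior integrand collapses to
\[
Q(V,V)-S(f,f)=\int_{\si}\Big(|\nabla^{\si}W|^{2}+\tfrac12|\mathrm{I\!I}_{\si}|^{2}|W|^{2}+2\langle A\nabla^{\si}f,W\rangle-|W|^{2}\Big)\,dA-\cot r\int_{\pa\si}|W|^{2}\,dL.
\]

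Next I would convert the bulk term into a boundary integral by a Bochner argument. Applying the Weitzenb\"ock/divergence identity $\operatorname{div}(\mathcal{L}_{W}g)=\Delta^{\si}W+\nabla^{\si}(\operatorname{div}W)+\operatorname{Ric}(W)$ to the structure equation, and using $\operatorname{div}A=0$ (the contracted Codazzi equation for a minimal hypersurface of a space form), I obtain $\Delta^{\si}W=2A\nabla^{\si}f-KW$, where $\Delta^{\si}$ is the connection Laplacian and $K$ is the Gauss curvature. Integrating by parts via $\int_{\si}|\nabla^{\si}W|^{2}=-\int_{\si}\langle\Delta^{\si}W,W\rangle+\int_{\pa\si}\langle\nabla^{\si}_{\nu}W,W\rangle$, the $2\langle A\nabla^{\si}f,W\rangle$-terms cancel and the remaining interior coefficient becomes $K+\tfrac12|\mathrm{I\!I}_{\si}|^{2}-1$, which vanishes by the Gauss equation $K=1-\tfrac12|\mathrm{I\!I}_{\si}|^{2}$. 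This leaves
\[
Q(V,V)-S(f,f)=\int_{\pa\si}\langle\nabla^{\si}_{\nu}W,W\rangle\,dL-\cot r\int_{\pa\si}|W|^{2}\,dL.
\]

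Finally I would evaluate this boundary integral. Along $\pa\si$ the free boundary condition gives $\nu=N_{\pa\ba}$ and $\langle V,\nu\rangle=0$, so $W=\beta T$ with $T$ a unit tangent to $\pa\si$ and $\beta=\langle V,T\rangle$. Differentiating $N_{\pa\ba}=\tfrac1{\sin r}(\cos r\,x-\p_{0})$ shows $\nabla^{\mathbb{S}^{3}}_{X}\nu=\cot r\,X$ for $X$ tangent to $\pa\ba$; hence $\mathrm{I\!I}_{\si}(\nu,T)=\langle\nabla^{\mathbb{S}^3}_{T}\nu,N\rangle=0$ and the geodesic curvature of $\pa\si$ in $\si$ is $\langle\nabla^{\si}_{T}T,\nu\rangle=-\langle T,\nabla^{\mathbb{S}^3}_{T}\nu\rangle=-\cot r$. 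Inserting $X=\nu,\,Y=T$ into the structure equation gives $\langle\nabla^{\si}_{\nu}W,T\rangle=-\langle\nabla^{\si}_{T}W,\nu\rangle=\beta\cot r$, so that $\langle\nabla^{\si}_{\nu}W,W\rangle=\beta\langle\nabla^{\si}_{\nu}W,T\rangle=\cot r\,\beta^{2}=\cot r\,|W|^{2}$ on $\pa\si$. Thus the two boundary terms cancel and $Q(V,V)=S(f,f)$.

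I expect the main obstacle to be the bulk cancellation in the Bochner step: one must correctly combine the Lie-derivative divergence identity with the contracted Codazzi equation and, crucially, with the Gauss relation $K=1-\tfrac12|\mathrm{I\!I}_{\si}|^{2}$, which is exactly what annihilates the interior integrand. A secondary care point is tracking the sign of the geodesic curvature through the explicit expression for $N_{\pa\ba}$, since a sign error there would flip the final boundary identity.
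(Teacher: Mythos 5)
Your argument is correct, but it is a genuinely different proof from the paper's. The paper disposes of the lemma in a few lines by citation: it invokes the first author's comparison formula $(\delta^{2}\mathcal{E})(V) = (\delta^{2}\mathcal{A})(V^{\perp}) + 8\int_{\si}|\zeta|^{2}\,dA$ from \cite{L}, together with the observation from \cite{EM} that $\zeta = 0$ is precisely the conformality condition $\langle\nabla_{\partial_z}V,\partial_z\rangle = 0$, so that for conformal $V$ the energy index form $Q$ equals the area index form $S$ applied to the normal component. You instead re-derive this energy--area comparison from scratch in the specific two-dimensional, codimension-one, spherical-cap setting, and every step checks out: the decomposition $V = W + fN$ with the Gauss--Weingarten relations; the reformulation of conformality as $\mathcal{L}_{W}g = 2u\,g + 2f\,\mathrm{I\!I}_{\si}$ (whence $\operatorname{div}W = 2u$ and $\sum_{k}\langle\nabla^{\si}_{e_{k}}W,Ae_{k}\rangle = f|\mathrm{I\!I}_{\si}|^{2}$ by minimality); the expansion giving your displayed formula for $Q(V,V)-S(f,f)$; the identity $\Delta^{\si}W = 2A\nabla^{\si}f - KW$ from the divergence of the structure equation plus the contracted Codazzi equation; the bulk cancellation via the Gauss equation $K = 1 - \tfrac12|\mathrm{I\!I}_{\si}|^{2}$; and the boundary cancellation via the umbilicity $\nabla^{\mathbb{S}^{3}}_{X}N_{\pa\ba} = (\cot r)X$, which yields $\mathrm{I\!I}_{\si}(\nu,T) = 0$, $\langle\nabla^{\si}_{T}T,\nu\rangle = -\cot r$, and hence $\langle\nabla^{\si}_{\nu}W,W\rangle = (\cot r)|W|^{2}$ on $\pa\si$ --- I verified the signs, which you correctly flagged as the delicate point. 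What the paper's route buys is brevity and generality (the formula of \cite{L} holds for free boundary minimal surfaces in general ambient manifolds, so nothing specific to the cap needs to be computed); what yours buys is a self-contained elementary verification that makes explicit exactly where minimality, Gauss--Codazzi, and the umbilicity of $\pa\ba$ enter, and in particular shows that conformality is used twice --- once to collapse the interior integrand and once more, through the off-diagonal component $\langle\nabla^{\si}_{\nu}W,T\rangle = -\langle\nabla^{\si}_{T}W,\nu\rangle$, to cancel the boundary terms. If you write this up in full, the only bookkeeping worth spelling out is the sign convention $AX = -\nabla_{X}N$, $\mathrm{I\!I}_{\si}(X,Y) = \langle AX,Y\rangle$, on which your identities $(\nabla_{X}V)^{\top} = \nabla^{\si}_{X}W - fAX$ and $\langle\nabla_{X}V,N\rangle = \langle AX,W\rangle + X(f)$ depend.
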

\begin{proof}
The first author proved (see the formula at the end of the proof of Theorem 5 in \cite{L}) that
$$(\delta^{2} \mathcal{E})(V) = (\delta^{2} \mathcal{A})(V^{\perp}) + 8\int_{\si}|\zeta|^{2} dA,$$
where $\delta^{2} \mathcal{A}$ is the second variation of area and, with respect to a local isothermal complex coordinate $z = x + iy$, we have
$$\zeta = \Phi_{\ast}\big(\nabla^{g}_{\partial_z} (f\partial_{\bar{z}})\big) + \big(\nabla_{\partial_z} V^{\perp}\big)^{\top},$$
where $V^{\top} = \bar{f}\partial_{z} + f\partial_{\bar{z}}$, and we extended the connections to $\Phi^{\ast}(T\mathbb{S}^3)\otimes_{\R}\mathbb{C}$ complex bilinearly.
However, the condition $\zeta = 0$ is equivalent to the equation (see section 2 of \cite{EM})
$$\langle \nabla_{\partial_z} V,\partial_z\rangle = 0,$$
which can be rewritten as
$$\langle \nabla_{\partial_x} V,\partial_x\rangle = \langle \nabla_{\partial_y} V,\partial_y\rangle \quad \textrm{and}\quad \langle \nabla_{\partial_x} V,\partial_y\rangle = -\langle \nabla_{\partial_x} V,\partial_y\rangle,$$
and this is equivalent to $V$ being conformal. Hence, we have $(\delta^{2} \mathcal{E})(V) = (\delta^{2} \mathcal{A})(V^{\perp})$, and we obtain \eqref{eq:QS.conf}.
\end{proof}

 Observe that if $\si^{2}$ is not contained in a plane passing through the origin, then the space of functions $\mathcal C:=\{\langle \Phi, v \rangle; v\in \mathbb R^{4}\}$ has dimension four.

\begin{lemma}\label{lemma:conformal}
Suppose that $\Phi: (\Sigma,g) \to \mathbb B^3(r)$ is a free boundary minimal immersion of an annulus. Then, there exists a subspace $\mathcal C_1$ of $\mathcal{C}$ of dimension at least three such that for all $\psi\in\mathcal C_1$, there is $Y^{\top} \in \Gamma(T\si)$ such that $Y^{\top}$ is tangential to $\partial \mathbb B^3(r)$ along $\partial \Sigma$ and $Y(\psi)=Y^{\top}+\psi N$ is conformal. Moreover, for any two such vector fields $Y_{1}^{\top}$ and $Y_{2}^{\top}$, there exists a constant $c \in \R$ such that
$$\Phi_{\ast}\big(Y_{1}^{\top} - Y_{2}^{\top}\big) = c\,\frac{\partial\Phi}{\partial\theta}.$$
\end{lemma}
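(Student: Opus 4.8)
The plan is to reduce the conformality of $Y(\psi) = Y^{\top} + \psi N$ to an inhomogeneous conformal Killing equation for the tangential field $Y^{\top}$, and then to analyze that equation as a Fredholm boundary value problem on the annulus. First I would use the characterization of conformality established in the proof of Lemma~\ref{lemma:perp2}, namely that a section $V$ along $\Phi$ is conformal if and only if $\langle\nabla_{\partial_z}V,\partial_z\rangle=0$ in a local isothermal coordinate $z$. Writing $Y = Y^{\top}+\psi N$ with $Y^{\top}=\alpha\partial_z+\bar\alpha\partial_{\bar z}$ and expanding, the normal part contributes $-\psi\,\mathrm{I\!I}(\partial_z,\partial_z)$ while the tangential part contributes a nonzero multiple of $\partial_z\bar\alpha$. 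Since $\Phi$ is minimal, $\mathrm{I\!I}$ is trace-free and the Hopf differential $\mathrm{I\!I}(\partial_z,\partial_z)\,dz^2$ is holomorphic, so matching $(2,0)$-parts turns the conformality condition into the single linear equation
$$(\mathcal{L}_{Y^{\top}}g)_{0} = 2\psi\,\mathrm{I\!I},$$
an inhomogeneous conformal Killing equation whose right-hand side is automatically trace-free precisely because the immersion is minimal.

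Next I would translate the boundary requirement. Along $\partial\Sigma$ the free boundary condition gives $\nu = N_{\partial\mathbb{B}^3(r)}$, so $N$ is tangent to $\partial\mathbb{B}^3(r)$ and $T_p\partial\mathbb{B}^3(r) = T_p\partial\Sigma\oplus\mathbb{R}N$ there. Consequently $Y(\psi)$ is tangent to $\partial\mathbb{B}^3(r)$ if and only if $Y^{\top}$ is tangent to $\partial\Sigma$, i.e. $\langle Y^{\top},\nu\rangle = 0$ on $\partial\Sigma$. Thus the task is to solve the inhomogeneous conformal Killing equation above subject to this Neumann-type boundary condition.

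I would then treat the conformal Killing operator $\mathcal{D}\colon Y^{\top}\mapsto(\mathcal{L}_{Y^{\top}}g)_{0}$ with the boundary condition $\langle\cdot,\nu\rangle=0$ as an elliptic Fredholm problem; since the equation is conformally invariant I may compute on the cylinder $\big([-T,T]\times\mathbb{S}^1, g_{\mathrm{cyl}}\big)$. The kernel consists of conformal Killing fields of the annulus tangent to both boundary circles: writing such a field as a holomorphic vector field $\alpha(w)\partial_w$ on the annulus $\{1\le|w|\le R\}$ and demanding that its radial component $\mathrm{Re}(\bar w\,\alpha)$ vanish on $|w|=1$ and $|w|=R$, a short Laurent/Fourier computation (using $R\neq 1$) shows the only solutions are real multiples of $\partial_\theta$. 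Dually, the cokernel is identified, via the divergence adjoint of $\mathcal{D}$ and integration by parts, with the holomorphic quadratic differentials on the annulus satisfying the adjoint reality condition along $\partial\Sigma$; I expect this space to be one-dimensional, spanned by $dz^2$, which is exactly the direction of the Hopf differential of the rotational examples. Granting this, solvability of $\mathcal{D}(Y^{\top})=2\psi\,\mathrm{I\!I}$ reduces to a single linear condition $\ell(\psi)=0$ on $\psi\in\mathcal{C}$, so $\mathcal{C}_1:=\ker\ell\subset\mathcal{C}$ has dimension at least $\dim\mathcal{C}-1=3$.

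The uniqueness clause then follows immediately from the kernel computation: if $Y_1^{\top}$ and $Y_2^{\top}$ both solve the equation for the same $\psi$ with the boundary condition, their difference lies in $\ker\mathcal{D}=\mathbb{R}\,\partial_\theta$, whence $\Phi_{\ast}\big(Y_1^{\top}-Y_2^{\top}\big) = c\,\Phi_{\ast}\partial_\theta = c\,\frac{\partial\Phi}{\partial\theta}$ for some constant $c$. The hard part will be the precise Fredholm and index analysis of this boundary value problem — in particular showing that the cokernel, equivalently the space of admissible holomorphic quadratic differentials on the annulus, is exactly one-dimensional so that only a single linear condition is imposed on $\mathcal{C}$ — together with the careful bookkeeping required to pass the conformality equation, the boundary condition, and its adjoint between the intrinsic, ambient, and complex-coordinate descriptions.
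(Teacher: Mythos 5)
Your proposal follows essentially the same route as the paper's proof: the conformality of $Y(\psi)=Y^{\top}+\psi N$ is reduced (using minimality, so the right-hand side is the trace-free $\mathrm{I\!I}$ weighted by $\psi$) to the inhomogeneous equation $\partial f/\partial\bar z=h$ in the global cylinder coordinate --- your conformal Killing equation in complex notation --- the free boundary tangency becomes $\mathfrak{Re}\,f=0$ on $\partial\Sigma$, and the Fredholm alternative, with one-dimensional cokernel forcing a single real linear condition $\ell(\psi)=0$ so that $\dim\mathcal{C}_1\geq\dim\mathcal{C}-1=3$, and with kernel $\mathbb{R}\,\partial_\theta$ yielding the uniqueness clause $\Phi_{\ast}(Y_1^{\top}-Y_2^{\top})=c\,\Phi_\theta$, completes the argument exactly as in the paper (which notes this is the same boundary problem as in Fraser--Schoen). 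The only cosmetic difference is that the paper settles the kernel and the adjoint kernel by the maximum principle applied to the holomorphic, respectively antiholomorphic, function with vanishing real (respectively imaginary) part on $\partial\Sigma$, rather than by your Laurent/Fourier computation, and this confirms the one-dimensionality of the cokernel (real constants, i.e., your $dz^2$ direction) that your plan leaves as the step to be granted.
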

 
\begin{proof}
We need to find $Y^{\top} \in \Gamma(T\si)$ such that $Y(\psi)=Y^{\top}+\psi N$ is conformal, that is, 
\begin{equation*}
\langle\nabla_{\partial_\theta}Y, \partial_s\rangle +\langle\nabla_{\partial_s}Y, \partial_\theta \rangle=0\quad
\textrm{and}\quad
\langle\nabla_{\partial_s}Y, \partial_s\rangle =\langle\nabla_{\partial_\theta}Y, \partial_\theta \rangle.
\end{equation*}

Rewriting these two equations, we get
\begin{align*}
0&=\langle\nabla_{\partial_\theta}(Y^{\top}+\psi N), \partial_s\rangle +\langle\nabla_{\partial_s}(Y^{\top}+\psi N), \partial_\theta \rangle\\
&= \langle\nabla^{g}_{\partial_\theta}Y^{\top}, \partial_s\rangle +\langle\nabla^{g}_{\partial_s}Y^{\top}, \partial_\theta \rangle-2\psi \mathrm{I\!I}_{\si}(\partial_\theta, \partial_s),
\end{align*}
and
\begin{align*}
0&=\langle\nabla_{\partial_s}(Y^{\top}+\psi N), \partial_s\rangle -\langle\nabla_{\partial_\theta}(Y^{\top}+\psi N), \partial_\theta \rangle\\
&=\langle\nabla^{g}_{\partial_s}Y^{\top}, \partial_s\rangle -\langle\nabla^{g}_{\partial_\theta}Y^{\top}, \partial_\theta \rangle -\psi \mathrm{I\!I}_{\si}(\partial_s, \partial_s)+\psi \mathrm{I\!I}_{\si}(\partial_\theta, \partial_\theta)\\
&=\langle\nabla^{g}_{\partial_s}Y^{\top}, \partial_s\rangle -\langle\nabla^{g}_{\partial_\theta}Y^{\top}, \partial_\theta \rangle -2\psi \mathrm{I\!I}_{\si}(\partial_s, \partial_s),
\end{align*}
where in the last equality we used the fact that $\si$ is minimal.

So, we need to solve 
\begin{equation}
\left\{
\begin{array}{rcl}
\langle\nabla^{g}_{\partial_\theta}Y^{\top}, \partial_s\rangle +\langle\nabla^{g}_{\partial_s}Y^{\top}, \partial_\theta \rangle&=& 2\psi \mathrm{I\!I}_{\si}(\partial_\theta, \partial_s), \\
\langle\nabla_{\partial_s}Y^{\top}, \partial_s\rangle -\langle\nabla_{\partial_\theta}Y^{\top}, \partial_\theta \rangle&=&2\psi \mathrm{I\!I}_{\si}(\partial_s, \partial_s).
\end{array}\right.
\label{eq:conf:eq01}
\end{equation}

Recall $\{\partial_s, \partial_\theta\}$ form a (globally defined) basis for $T\si$ and $\vert\partial_s\vert^{2}_{g}=\vert\partial_\theta\vert^{2}_{g}=\lambda$. Hence, we can write $Y^{\top}=u\Phi_s+v\Phi_\theta$ for some functions $u,v$. 

Since $g = \lambda g_{\textrm{cyl}}$, for all $X,Y \in \Gamma(T\si)$,  we have
$$\nabla^{g}_{X}Y = \nabla^{\textrm{cyl}}_{X}Y + \frac{1}{2\lambda}\big[X(\lambda)Y + Y(\lambda)X - g_{\textrm{cyl}}(X,Y)\nabla^{\textrm{cyl}}\lambda\big].$$
Hence, we obtain the following four equations:
\begin{align*}
\langle\nabla^{g}_{\partial_s}Y^{\top}, \partial_s\rangle &=(\partial_s u)\lambda+\frac{1}{2}u(\partial_s\lambda)+\frac{1}{2}v(\partial_\theta \lambda),\\
\langle\nabla^{g}_{\partial_\theta}Y^{\top}, \partial_s\rangle &=(\partial_\theta u)\lambda+\frac{1}{2}u(\partial_\theta\lambda)-\frac{1}{2}v(\partial_s \lambda),\\
\langle\nabla^{g}_{\partial_s}Y^{\top}, \partial_\theta\rangle &=(\partial_s v)\lambda+\frac{1}{2}v(\partial_s\lambda)-\frac{1}{2}u(\partial_\theta \lambda),\\
\langle\nabla^{g}_{\partial_\theta}Y^{\top}, \partial_\theta\rangle &=(\partial_\theta v)\lambda+\frac{1}{2}v(\partial_\theta\lambda)+\frac{1}{2}u(\partial_s \lambda).
\end{align*}
Putting them back in \eqref{eq:conf:eq01}, we obtain
$$
\left\{
\begin{array}{rcl}
\partial_\theta u+\partial_s v&=&2\lambda^{-1}\psi \mathrm{I\!I}_{\si}(\partial_\theta, \partial_s),\\
\partial_s u-\partial_\theta v&=&2\lambda^{-1}\psi \mathrm{I\!I}_{\si}(\partial_s, \partial_s).
\end{array}
\right.
$$
Thus, if we set $f=u+iv$ and $z=s+i\theta,$ these equations become
\begin{equation}
\frac{\partial f}{\partial\bar{z}}=h,
\end{equation}
where $h=\lambda^{-1}\psi \mathrm{I\!I}_{\si}(\partial_\theta, \partial_s)+ i\lambda^{-1}\psi\mathrm{I\!I}_{\si}(\partial_s, \partial_s)$. 
Moreover, the boundary condition that $Y^{\top}$ is tangential to $\partial \mathbb B^3(r)$ along $\partial \Sigma$ means that we need $Y^{\top}$ parallel to $\partial_\theta$ along $\partial\si$, that is, $u=0$ on $\partial\si$. So, we are imposing 
$$\mathfrak{Re}f=0 \mbox{ on } \partial \si.$$

Hence, we need to solve the following boundary linear problem
\begin{equation}
\left\{
\begin{array}{rcl}
\frac{\partial f}{\partial\bar{z}}&=&h, \, \mbox{ in } \si\\
\mathfrak{Re}f&=&0, \, \mbox{ on } \partial \si.
\end{array}
\right.\label{eq:syst}
\end{equation}

This is the same boundary problem as in \cite{F.S3}. By the Fredholm alternative, it has a solution if, and only if, $h$ is $L^{2}(\si,g_{\mathrm{cyl}})$-orthogonal to the kernel of the adjoint of the operator $\frac{\partial}{\partial\bar{z}}$ (defined on the space of complex differentiable functions which are real on the boundary). This kernel consists of the complex differentiable functions satisfying the following:
\begin{equation*}
\left\{
\begin{array}{rcl}
-\frac{\partial \phi}{\partial z}&=&0, \, \mbox{ in } \si\\
\mathfrak{Im}\,\phi&=&0, \, \mbox{ on } \partial \si.
\end{array}
\right.
\end{equation*}
The solutions of this boundary value problem are $\phi \equiv c$, $c \in \R$.

Defining
$$
\mathcal C{_1}=\left\{\psi\in \mathcal{C};\, \mathfrak{Re}\left(\int_\si \big[\lambda^{-1}\psi \mathrm{I\!I}_{\si}(\partial_\theta, \partial_s)+ i\lambda^{-1}\psi\mathrm{I\!I}_{\si}(\partial_s, \partial_s)\big]dA \right)=0 \right\},
$$
it follows from previous analysis that
$\dim \mathcal C{_1} \geq \dim\mathcal C - 1 = 3.$

To prove the uniqueness statement, observe that, if there are two solutions $f_1, f_2$ to \eqref{eq:syst}, then,
$$
\left\{
\begin{array}{rcl}
\frac{\partial{(f_1-f_2)}}{\partial\bar{z}}&=&0,\,  \mbox{ in } \si\\
\mathfrak{Re}(f_1-f_2)&=&0, \, \mbox{ on } \partial \si.
\end{array}
\right.
$$
So, the maximum principle implies that $f_1-f_2=ic$ in $\si$ for some real constant $c,$ and the statement follows.
\end{proof}

Now we are able to prove the uniqueness result stated in the introduction.

\begin{thm3}
\label{thm:uniq}
Let  $\Sigma$ be an annulus and consider $\Phi = (\phi_0,\ldots,\phi_n):(\Sigma,g) \to \mathbb{B}^n(r)$ a free boundary minimal immersion. Suppose $\phi_j$ is a $\sigma_{1}(g)$-eigenfunction, for $j=1,\ldots,n$. Then $n=3$ and $\Phi(\Sigma)$ is one of the rotational examples given by \eqref{eq-annulus}.
\end{thm3}

\begin{proof}
Recall that the coordinate functions $\phi_i = \langle \Phi,\partial_i\rangle$ satisfy the equations \eqref{eq:FBMS1}, \eqref{eq:FBMS2}, \eqref{eq:FBMS3},
and we are assuming that $\sigma_0=-\tan(r)$ and $\sigma_1=\cot(r).$ The multiplicity bound on $\sigma_1$  (see Theorem \ref{thm:multiplicity} and Remark \ref{rem-mult}) implies that $n=3$ necessarily. We divide the proof into two cases.\\

\noindent
{\bf Case 1:} Suppose $\displaystyle\int_{\partial\si}\frac{\partial\Phi}{\partial\theta} dA=0$.\\

First observe that $\displaystyle\frac{\partial\Phi}{\partial\theta}\in \Gamma(\Phi*T\mathbb S^n)$ and, since the annulus is free boundary, we know $\displaystyle\frac{\partial\Phi}{\partial\theta}\in T\pa\ba$ on $\pa\si$.

The calculations in the proof of Lemma \ref{kernel} imply that
 \begin{equation}\label{eq:25}
 \Delta_g\frac{\partial\Phi}{\partial\theta}+2\lambda^{-1}\frac{\partial\lambda}{\partial\theta}\Phi+2\frac{\partial \Phi}{\partial\theta}=0 \quad \textrm{in } \si
 \end{equation}
 and (taking $V=\displaystyle\frac{\partial\Phi}{\partial\theta}$)
 \begin{equation}\label{eq:22}
 \sum_{j=0}^{n}\int_{\si} \left|\nabla^{g}\frac{\partial\phi_j}{\partial\theta}\right|^2 dA + \sum_{j=0}^{n}\int_{\partial\si}\frac{\partial\phi_j}{\partial\theta}\, \nu_g\left(\frac{\partial\phi_{j}}{\partial\theta}\right) dL-2\int_\Sigma\left| \frac{\partial \Phi}{\partial\theta}\right|^2 dA=0,
 \end{equation}
where 
 $$\nu_g\left(\frac{\partial\phi_i}{\partial\theta}\right)=\sigma_1\frac{\partial\phi_i}{\partial\theta}+\frac{\partial \lambda^{1/2}}{\partial\theta}\lambda^{-1/2}\sigma_1\phi_i, \quad  \textrm{on } \partial\si \quad \textrm{for } i=1,2,3.$$ 
Since $\displaystyle\frac{\partial\phi_0}{\partial\theta}\Big\vert_{\partial\si}=0$ and $\displaystyle\left\langle \Phi, \frac{\partial\Phi}{\partial\theta} \right\rangle = 0$, we can rewrite \eqref{eq:22} as

\begin{equation}\label{eq:24}
\begin{array}{rl}
&\displaystyle\sum_{i=1}^3\left[\int_\si  \left(   \left|\nabla^{g}\frac{\partial\phi_i}{\partial\theta}\right|^2  -2\left| \frac{\partial \phi_i}{\partial\theta}\right|^2 \right)dA  -\sigma_1\int_{\partial\si}  \left| \frac{\partial \phi_i}{\partial\theta}\right|^2dL\right] + \vspace{0.1cm}\\
&\displaystyle \int_\si\left(\left|\nabla^{g}\frac{\partial\phi_0}{\partial\theta}\right|^2  -2\left| \frac{\partial \phi_0}{\partial\theta}\right|^2 \right)dA  -\sigma_0\int_{\partial\si}  \left| \frac{\partial \phi_0}{\partial\theta}\right|^2dL=0.
\end{array}
\end{equation}

First observe that since $\displaystyle\frac{\partial \phi_0}{\partial\theta}\Big\vert_{\partial\si} = 0,$ Proposition \ref{prop-dirichlet} implies that the last term in the above expression is nonnegative.

Moreover, since $\phi_0\big\vert_{\partial\si}$ is constant and we are assuming $\phi_0$ is a first eigenfunction, the condition $\displaystyle\int_{\partial\si}\frac{\partial\Phi}{\partial\theta} dA=0$ implies that, if $\displaystyle\frac{\partial \phi_i}{\partial\theta}\big\vert_{\partial\si} \neq 0$, then $\displaystyle\frac{\partial\phi_i}{\partial\theta}$ is a test function for the variational characterization of $\sigma_1,$ in particular, we have
$$
\int_\si  \left(\left|\nabla^{g}\frac{\partial\phi_i}{\partial\theta}\right|^2  -2\left| \frac{\partial \phi_i}{\partial\theta}\right|^2 \right)dA  -\sigma_1\int_{\partial\si}  \left| \frac{\partial \phi_i}{\partial\theta}\right|^2dL\geq0.
$$
For the case where $\displaystyle\frac{\partial \phi_i}{\partial\theta}\big\vert_{\partial\si} = 0$, we get the same conclusion above using Proposition \ref{prop-dirichlet}.

Therefore, \eqref{eq:24} implies that all inequalities above are equalities. Hence, if $\displaystyle\frac{\partial \phi_i}{\partial\theta}\Big\vert_{\partial\si} \neq 0,$ then $\displaystyle\frac{\partial \phi_i}{\partial\theta}$ is a (nontrivial)  $\sigma_1$-eigenfunction. In any case, it holds
$$
\Delta_g\frac{\partial\phi_i}{\partial\theta}+2\frac{\partial \phi_i}{\partial\theta}=0 \textrm{ in } \si \textrm{ for } i=1,2,3.
$$
Hence, by \eqref{eq:25} we get
$$
\lambda^{-1}\frac{\partial\lambda}{\partial\theta}\phi_i=0  \textrm{ in } \si  \textrm{ for } i=1,2,3.
$$
Since $\displaystyle\sum_{i=1}^3\phi_i^2\neq0$ in $\si\setminus\{(1,0,0,0)\}$ we conclude that $\displaystyle\frac{\partial\lambda}{\partial\theta}\equiv0$ in $\si$, that is, the metric is rotational.\\

\noindent
{\bf Case 2:} Suppose $\displaystyle\int_{\partial\si}\frac{\partial\Phi}{\partial\theta} dA\neq0$.\\

By Lemma \ref{lemma:conformal}, for any $\psi\in \mathcal C_1$, there is a conformal vector field whose normal component is $\psi N$, and which is unique up to addition of a real multiple of $\Phi_{\theta}$. We denote by $Y(\psi) = Y^\top +\psi N$ the unique conformal vector field such that 
$\displaystyle\left\langle \int_{\partial\si}Y(\psi)dL, \int_{\partial\si}\Phi_{\theta} dL\right\rangle=0$.

Also, by Lemma  \ref{lemma:conformal}, we have that $Y^\top$ is tangential to $\partial \mathbb B^3(r)$. Hence, in particular, since the unit normal vector to $\partial \mathbb B^3(r)$ at a point $x$ is given by $(\sin r)^{-1}\big(\langle \partial_0, x\rangle x-\partial_0\big)$, we conclude that $\langle Y(\psi), \partial_0\rangle=0.$ 

By construction, the map $\psi\to Y(\psi)$ is linear. Consider the vector space 
$$
\mathcal{V}=\left\{Y(\psi)+c\,\Phi_{\theta}; \psi\in \mathcal{C}_1, c\in\mathbb{R}\right\}.
$$
Notice that $\mathcal{V}$ has dimension at least $4$, since dim$(\mathcal{C}_1)\geq 3$ and any nonzero vector field $Y(\psi)$ has a nontrivial normal component, while $\Phi_{\theta}$ is tangential to $\si.$ Also, every element in $\mathcal{V}$ has first coordinate zero, i.e., $\left\langle Y(\psi)+c\,\Phi_{\theta}, \partial_0\right\rangle=0,$ for any $\psi\in \mathcal{C}_1, c\in\mathbb{R}.$

Define a linear transformation 
$T: \mathcal{V} \to \{0\}\times\mathbb{R}^3\subset \mathbb{R}^4$ by
$$T\left(Y(\psi)+c\,\Phi_{\theta}\right)=\int_{\partial\si}\left(Y(\psi)+c\,\Phi_{\theta}\right) dL.$$

Since dim$(V)\geq 4$, this linear transformation has a nontrivial nullspace, i.e., there exist $\psi\in \mathcal{C}_1$ and $c\in\mathbb{R}$ such that $\int_{\partial\si}\left(Y(\psi)+c\,\Phi_{\theta}\right) dL=0$ and $Y(\psi)+c\,\Phi_{\theta}\neq0.$

We have
$$
Q\left(Y(\psi)+c\,\Phi_{\theta}, Y(\psi)+c\,\Phi_{\theta}\right)=Q\big(Y(\psi), Y(\psi)\big)+2c\,Q\left(Y(\psi), \Phi_{\theta}\right)+c^2Q\left(\Phi_{\theta}, \Phi_{\theta}\right).
$$

By Lemma \ref{kernel}, we know 
$$Q\left(Y(\psi), \Phi_{\theta}\right)=Q\left(\Phi_{\theta}, \Phi_{\theta}\right)=0,$$
and, since $Y(\psi)$ is conformal, Lemmas \ref{lemma:perp} and \ref{lemma:perp2} give that
$$
Q(Y(\psi),Y(\psi))= S(\psi,\psi)= -\int_\si \vert\mathrm{I\!I}_{\si}\vert^2\psi^2 dA-\langle v,\partial_0\rangle^2\cot(r)L(\partial\si).
$$
Hence,
\begin{equation}\label{eqQsign}
Q\left(Y(\psi)+c\,\Phi_{\theta}, Y(\psi)+c\,\Phi_{\theta}\right)=-\int_\si \vert\mathrm{I\!I}_{\si}\vert^2\psi^2dA -\langle v,\partial_0\rangle^2\cot(r)L(\partial\si)\leq 0.
\end{equation}

Denote
$$f_i=\left(Y(\psi)+c\,\Phi_{\theta}\right)_i=\left\langle Y(\psi)+c\,\Phi_{\theta}, \partial_i\right\rangle,\ i=0,1,2,3.$$ 
Since $f_0\big\vert_{\partial\si}=0,$ we know by Proposition \ref{prop-dirichlet} that
$$
\int_{\si}\left(\vert \nabla^{g} f_0\vert^2-2f_0^2\right)dA\geq 0.
$$
On the other hand, since $\displaystyle\int_{\partial\si}\left(Y(\psi)+c\,\Phi_{\theta}\right) dL=0$, for $i=1,2,3,$ either the function $f_i$
is a test function for the variational characterization of $\sigma_1,$ which implies that
$$
\int_{\si}\left(\vert \nabla^{g} f_i\vert^2-2f_i^2\right)dA-\cot(r)\int_{\partial\si}f_i^2 dL\geq0;
$$
or
$f_i\vert_{\pa\si} = 0$ and, by Proposition \ref{prop-dirichlet}, we have
$$
\int_{\si}\left(\vert \nabla^{g} f_i\vert^2-2f_i^2\right)dA\geq 0.
$$

Then, 
\begin{equation}\label{eqQsign2}
Q\left(Y(\psi)+c\,\Phi_{\theta}, Y(\psi)+c\,\Phi_{\theta}\right)=\sum_{i=0}^3\left(\int_{\si}\left(\vert \nabla^{g} f_i\vert^2-2f_i^2\right)dA-\cot(r)\int_{\partial\si}f_i^2 dL\right)\geq 0. 
\end{equation}

Thus, \eqref{eqQsign} and \eqref{eqQsign2} yield that
$$
-\int_\si \vert\mathrm{I\!I}_{\si}\vert^2\psi^2dA= 0,
$$
which implies $\vert\mathrm{I\!I}_{\si}\vert^2\psi^2 =0$. If $\psi > 0$ at some point $p$, then the same holds in a neighborhood of $p$, which implies $\vert\mathrm{I\!I}_{\si}\vert \equiv 0$ in $\si$ (by analytic continuation). However, this contradicts the fact that $\si$ is an annulus. Thus, $\psi \equiv 0$ and, hence, $Y(\psi)=0$. Therefore, $\displaystyle\int_{\partial\si}\Phi_{\theta} dA=0$ and we are in the situation of our previous case, where we can conclude that the metric has the form $\lambda(s)g_{\mathrm{cyl}}$.

Now we are going to show that this implies the surface $\si$ is rotational.

In the proof of Cases 1 and 2 above, we concluded that for $i=1,2,3$, either $\displaystyle\frac{\partial\phi_i}{\partial\theta}$ is identically zero or is a $\sigma_1$-eigenfunction. Since the multiplicity of $\sigma_1$ is three and $\{\phi_1,\phi_2, \phi_3\}$ form a basis for its eigenspace, there are constants $a_{ij}$ such that
$
\displaystyle\frac{\partial\phi_i}{\partial\theta}=\sum_{j=1}^3 a_{ij}\phi_j.
$
Also, observe that  $\displaystyle\frac{\partial\phi_0}{\partial\theta}\Big\vert_{\pa\si}=0$ and, since $\displaystyle\frac{\pa\lambda}{\pa\theta}=0$, $\displaystyle\Delta_g \frac{\partial\phi_0}{\partial\theta}+2\frac{\partial\phi_0}{\partial\theta}=0$ in $\si$. Then, since $2$ is not a Dirichlet eigenvalue, we conclude that $\displaystyle\frac{\partial\phi_0}{\partial\theta}=0$ on $\si.$

Let us denote by $A$ the $4\times 4$ matrix with constant entries defined by 
\begin{equation}\label{eq:def.A}
\frac{\partial\Phi}{\partial\theta}(s,\theta)= A\Phi(s,\theta).
\end{equation}
Our goal is to show that $A$ is antisymmetric.

Observe that $\langle A\Phi, \Phi\rangle=\langle \Phi_{\theta}, \Phi\rangle=0.$ Hence, differentiating the left-hand side of this equality, we conclude that 
$$
\langle A\Phi (s,\theta), v\rangle +\langle \Phi (s,\theta),Av\rangle=0,
$$
for any $v\in T_{\Phi(s,\theta)}\Phi(\si)$.

Now define a symmetric constant matrix $B$ by
$$
\langle Bv,w\rangle = \langle Av,w\rangle +\langle v,Aw\rangle  \mbox{ for any } v, w\in \mathbb R^4.
$$
Note that $\langle B\Phi,\Phi\rangle=0$ and $\langle B\Phi,Y\rangle=0$ for any $Y\in T_{\Phi(s,\theta)}\Phi(\si).$ Thus, there is a function $a=a(s,\theta)$ such that 
$
B\Phi=aN,
$
where $N=N(s,\theta)$ is the unit normal to $\si$ at $\Phi(s,\theta).$

Since $\Phi_{\theta}= A\Phi,$ we have $A\Phi_{s}=\Phi_{s\theta}.$ In particular,
$$
\left\langle A\Phi_{s}, \Phi_{s}  \right\rangle = \left\langle \Phi_{s\theta}, \Phi_{s}  \right\rangle=\frac{1}{2}\frac{\partial}{\partial\theta}\vert \Phi_{s}\vert^2=\frac12\frac{\partial}{\partial\theta}(\lambda(s))=0.
$$
Thus,
$$
\left\langle B\Phi_{s}, \Phi_{s} \right\rangle= 2\big\langle A\Phi_{s}, \Phi_{s}  \big\rangle=0.
$$

Using that $B=aN,$ we have
$$
B\Phi_{s}=\frac{\partial a}{\partial s}N +a \nabla_{\Phi_s}^{\mathbb S^3}N,
$$
so,
$$
0=\left\langle B\Phi_{s}, \Phi_{s} \right\rangle=a \big\langle  \nabla_{\Phi_s}^{\mathbb S^3}N, \Phi_{s} \big\rangle.
$$
Similarly, using that $ \displaystyle A\Phi_{\theta}=\Phi_{\theta\theta}$, we get
$$
0=\left\langle B\Phi_{\theta}, \Phi_{\theta} \right\rangle=a \big\langle  \nabla_{\Phi_\theta}^{\mathbb S^3}N, \Phi_{\theta} \big\rangle.
$$

Using the fact that $\Delta_{\textrm{cyl}} \Phi+2\lambda\Phi=0,$ we have
\begin{align*}
a \left\langle  \nabla_{\Phi_s}^{\mathbb S^3}N, \Phi_{\theta} \right\rangle &= \displaystyle\left\langle B\Phi_s, \Phi_{\theta} \right\rangle =  \displaystyle\left\langle A\Phi_s, \Phi_{\theta} \right\rangle+ \left\langle \Phi_s, A\Phi_{\theta} \right\rangle\\
&=  \displaystyle\left\langle \Phi_{s\theta}, \Phi_{\theta} \right\rangle + \left\langle \Phi_{\theta\theta}, \Phi_s \right\rangle= \displaystyle\left\langle \Phi_{s\theta}, \Phi_{\theta} \right\rangle +\left\langle -\Phi_{ss} -2\lambda\Phi, \Phi_s \right\rangle\\
&= \displaystyle\frac{1}{2}\frac{\partial}{\partial s} \vert \Phi_{\theta}\vert^2-\frac{1}{2}\frac{\partial}{\partial s} \vert \Phi_s\vert^2= 0,
\end{align*}
where the last equality follows from conformality.

If $a(s,\theta)\neq0$ for some $(s,\theta),$ then there would be an open neighborhood of $(s,\theta)$ such that $a(p)\neq 0$ for all $p\in U.$ Hence, by the equalities obtained above, we would have 
$$
\big\langle  \nabla_{\Phi_s}^{\mathbb S^3}N, \Phi_{\theta} \big\rangle = \big\langle  \nabla_{\Phi_s}^{\mathbb S^3}N, \Phi_{s} \big\rangle = \big\langle  \nabla_{\Phi_\theta}^{\mathbb S^3}N, \Phi_{\theta} \big\rangle =0\ \mbox{ in } U,
$$
which would imply that $\si$ is totally geodesic, a contradiction, since $\si$ is an annulus. Therefore, $a\equiv 0$ and we conclude that $Bx=0$ for all $x\in \si.$

Suppose $B$ is not the zero matrix. Then $\si$ is contained in the hyperplane Ker$(B)$ which implies that $\si$ is totally geodesic, a contradiction. Therefore, $B$ is the zero matrix and $A$ is antisymmetric. 

Observe that by the definition of $A$, we know that $Ae_0=0;$ hence, $0$ is an eigenvalue of $A$ and, since it is a $4\times 4$ antisymmetric matrix, it has multiplicity at least $2$. Then, there exists a unit vector $v$ orthogonal to $e_0$ such that $Av=0.$  Now consider the space $W=(\mbox{span}\{e_0, v\})^\perp$. Since $A$ is antisymmetric, this space is invariant by A, i.e., $A(W)\subset W.$ 

Hence, if we take $\{w_1,w_2\}$ an orthonormal basis of $W$, then $\{e_0,v,w_1,w_2\}$ is an orthonormal basis for $\mathbb R^4$ and the matrix $A$ is written with respect to this basis as:
$$
A=\begin{pmatrix}
0 & 0 & 0 & 0\\
0 & 0 & 0 & 0\\
0 & 0 & 0 & -\mu\\
0 & 0 & \mu & 0
\end{pmatrix} \mbox{ for some } \mu\neq0.
$$

Now, for any $t$, consider 
$$
R_t=\begin{pmatrix}
1 & 0 & 0 & 0\\
0 & 1 & 0 & 0\\
0 & 0 & \cos t & -\sin t\\
0 & 0 & \sin t & \cos t
\end{pmatrix}.
$$

Observe that $\frac{d}{dt}(R_t)=\frac{A}{\mu}(R_t),$ i.e., the one-parameter subgroup generated by $A$ is given by rotations. Also, equation \eqref{eq:def.A} implies $Ax\in T_x\si$ for any $x\in\si$ and $Ay\in T_y\partial\si$ for any $y\in\partial\si$, thus the matrix $\mu^{-1}A$ defines two vector fields $X=\mu^{-1}A: \si \to T\si$ and $Y=\mu^{-1}A\Big\vert_{\pa\si}: \pa\si \to T(\pa\si)$. 
We claim that $R_t(\partial\Sigma)\subset \partial\Sigma$ and $R_t(\Sigma)\subset\Sigma$, i.e, $\si$ is rotational. In fact, given $p\in \partial\si,$ there exists a unique curve $\beta(t)\subset \partial \si$ such that $\beta(0)=p$ and $\beta'(t)=Y(\beta(t))=X(\beta(t))$. On the other hand, $\tilde{\beta}(t) =  R_{t}(p)$ is a curve such that $\tilde{\beta}(0)=p$ and  $\tilde{\beta}'(t)=X(\tilde{\beta}(t))$. Then, by uniqueness of solutions to ODE's, we have $\tilde{\beta}(t)=\beta(t).$ Therefore, $R_t(\partial\Sigma)\subset \partial\Sigma$. In a similar way we can show $R_t(\Sigma)\subset\Sigma$. 

By the result of do Carmo and Dajczer \cite{dCD} (see also \cite{Ot}), $\si$ is one of the rotational examples given by \eqref{eq-annulus}.
\end{proof}

\end{document}